\newtheorem{theorem}{Theorem}[section]
\newtheorem{corollary}[theorem]{Corollary}
\newtheorem{lemma}[theorem]{Lemma}
\newtheorem{proposition}[theorem]{Proposition}
\theoremstyle{definition}
\newtheorem{example}[theorem]{Example}
\newtheorem{remark}[theorem]{Remark}
\numberwithin{equation}{section}
\newcommand{\GO}{\mathrm{GO}}
\newcommand{\GU}{\mathrm{GU}}
\newcommand{\G}{\mathrm{G}}
\newcommand{\GL}{\mathrm{GL}}
\newcommand{\SL}{\mathrm{SL}}
\newcommand{\Sp}{\mathrm{Sp}}
\newcommand{\SO}{\mathrm{SO}}
\newcommand{\SU}{\mathrm{SU}}
\renewcommand{\O}{\mathrm{O}}
\newcommand{\PSL}{\mathrm{PSL}}
\newcommand{\PSU}{\mathrm{PSU}}
\newcommand{\U}{\mathrm{U}}
\newcommand{\PGU}{\mathrm{PGU}}
\newcommand{\PSp}{\mathrm{PSp}}
\newcommand{\PGL}{\mathrm{PGL}}
\newcommand{\PGaL}{\mathrm{P\Gamma L}}
\newcommand{\AGaL}{\mathrm{A\Gamma L}}
\newcommand{\POm}{\mathrm{P \Omega}}
\newcommand{\A}{\mathrm{A}}
\newcommand{\W}{\mathrm{W}}
\newcommand{\E}{\mathrm{E}}
\renewcommand{\S}{\mathrm{S}}
\newcommand{\Q}{\mathrm{Q}}
\newcommand{\D}{\mathrm{D}}
\newcommand{\Aut}{\mathrm{Aut}}
\newcommand{\Out}{\mathrm{Out}}
\newcommand{\PG}{\mathrm{PG}}
\newcommand{\Fix}{\mathrm{Fix}}
\newcommand{\B}{\mathrm{B}}
\newcommand{\M}{\mathrm{M}}
\newcommand{\Fbb}{\mathbb{F}}
\newcommand{\Dmc}{\mathcal{D}}
\newcommand{\Bmc}{\mathcal{B}}
\newcommand{\Pmc}{\mathcal{P}}
\newcommand{\Cmc}{\mathcal{C}}
\newcommand{\Smc}{\mathcal{S}}
\newcommand{\e}{\epsilon}
\renewcommand{\leq}{\leqslant}
\renewcommand{\geq}{\geqslant}
\newcommand{\Binom}[2]{\genfrac{[}{]}{0pt}{}{#1}{#2}}
\begin{document}
\title[Flag-transitive $2$-designs]{A classification of flag-transitive block designs}

\author[S.H. Alavi]{Seyed Hassan Alavi}%
\thanks{Corresponding author: S.H. Alavi}
\address{Seyed Hassan Alavi, Department of Mathematics, Faculty of Science, Bu-Ali Sina University, Hamedan, Iran.
}%
\email{alavi.s.hassan@basu.ac.ir and alavi.s.hassan@gmail.com (G-mail is preferred)}

\author[A. Daneshkhah]{Ashraf Daneshkhah}%
\address{Ashraf Daneshkhah, Department of Mathematics, Faculty of Science, Bu-Ali Sina University, Hamedan, Iran.
}%
\email{adanesh@basu.ac.ir}
\author[F. Mouseli]{Fatemeh Mouseli}%
\address{Fatemeh Mouseli, Department of Mathematics, Faculty of Science, Bu-Ali Sina University, Hamedan, Iran.}%
\email{f.mooseli@gmail.com}

\subjclass[]{05B05; 05B25; 20B25}%
\keywords{$2$-design; flag-transitive; automorphism group;   primitive group; large subgroup}
\date{\today}%

\begin{abstract}
  In this article, we investigate $2$-$(v,k,\lambda)$ designs with $\gcd(r,\lambda)=1$ admitting flag-transitive automorphism groups $G$. We prove that if $G$ is an almost simple group, then such a design belongs to one of the seven infinite families of $2$-designs or it is one of the eleven well-known examples. We describe all these examples  of designs. We, in particular, prove that if $\Dmc$ is a symmetric $(v,k,\lambda)$ design with $\gcd(k,\lambda)=1$ admitting a flag-transitive automorphism group $G$, then either $G\leq \AGaL_{1}(q)$ for some odd prime power $q$, or $\Dmc$ is a projective space or the unique Hadamard design with parameters  $(11,5,2)$.
\end{abstract}

\maketitle

\section{Introduction}\label{sec:intro}

A $2$-$(v,k,\lambda)$ design $\Dmc$ is a pair $(\Pmc,\Bmc)$ with a set $\Pmc$ of $v$ points and a set $\Bmc$ of $b$ blocks such that each block is a $k$-subset of $\Pmc$ and each two distinct points are contained in $\lambda$ blocks. The \emph{replication number} $r$ of $\Dmc$ is the number of blocks incident with a given point. A \emph{symmetric} design is a $2$-design with the same number of points and blocks, that is to say, $v=b$. An \emph{automorphism} of $\Dmc$ is a permutation on $\Pmc$ which maps blocks to blocks and preserving the incidence. The \emph{full automorphism} group $\Aut(\Dmc)$ of $\Dmc$ is the group consisting of all automorphisms of $\Dmc$. A \emph{flag} of $\Dmc$ is a point-block pair $(\alpha, B)$ such that $\alpha \in B$. For $G\leq \Aut(\Dmc)$, $G$ is called \emph{flag-transitive} if $G$ acts transitively on the set of flags. The group $G$ is said to be \emph{point-primitive} if $G$ acts primitively on $\Pmc$. A group $G$ is said to be \emph{almost simple} with socle $X$ if $X\unlhd G\leq \Aut(X)$, where $X$ is a nonabelian simple group. For finite simple groups of Lie type, we adopt the standard notation as in \cite{b:Atlas,b:KL-90}, and in particular, we use the notation described in Section~\ref{sec:defn} and therein references. 

The main aim of this paper is to study $2$-designs with flag-transitive automorphism groups.
In 1988, Zieschang~\cite{a:Zieschang-88} proved that if an automorphism group $G$ of a $2$-design with $\gcd(r,\lambda)=1$ is flag-transitive, then $G$ is a point-primitive group of almost simple or affine type. Such designs admitting an almost simple automorphism group with socle being an alternating group, a sporadic simple group, a projective special unitary group or a finite simple exceptional group have been studied in~\cite{a:A-Exp-CP,a:ABD-Un-CP,a:Zhou-sym-sporadic,a:Zhan-CP-nonsym-sprodic,a:Zhou-CP-nonsym-alt,a:Zhuo-CP-sym-alt}. The present paper is devoted to determining all possible $2$-designs with $\gcd(r,\lambda)=1$ admitting a flag-transitive almost simple automorphism group $G$ with socle $X$ being a nonabelian finite simple group. The examples of such designs are given in Section \ref{sec:examples}, and our main result is Theorem~\ref{thm:main} below:

\begin{theorem}\label{thm:main}
	Let $\Dmc=(\Pmc,\Bmc)$ be a nontrivial $2$-$(v,k,\lambda)$ design with $r$ being coprime to $\lambda$, and let $\alpha$ be a
	point of $\Dmc$. If $G$ is a flag-transitive automorphism group of $\Dmc$ with socle $X$ being a nonabelian finite simple group and $H=G_{\alpha}$, then $\Dmc$ and $G$ are as in {\rm Examples~\ref{ex:proj-space}-\ref{ex:other}}, or $X=\PSL_n(q)$ with $n\geq 3$ and $H\cap X\cong \,^{\hat{}}[q^{n-1}]{:}\SL_{n-1}(q){\cdot} (q-1)$ is a parabolic subgroup and $\Dmc$ is a  $2$-$((q^{n}-1)/(q-1),q,q-1)$ with $\gcd(n-1,q-1)=1$, where $\Pmc$ is the point set of $\PG_{n-1}(q)$ and $\Bmc=(B\setminus\{\alpha\})^{G}$ with $B$ a line in $\PG_{n-1}(q)$ and $\alpha$ any point in $B$. 
\end{theorem}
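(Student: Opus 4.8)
The plan is to combine Zieschang's reduction with the ``large subgroup'' method and then carry out a case analysis over the maximal subgroups of the relevant classical groups. By \cite{a:Zieschang-88}, flag-transitivity together with $\gcd(r,\lambda)=1$ forces $G$ to be point-primitive, and since $G$ is almost simple this means that $H=G_\alpha$ is a maximal subgroup of $G$ with $X\not\leq H$. I would then assemble the elementary constraints: the identities $bk=vr$ and $\lambda(v-1)=r(k-1)$, Fisher's inequality $b\geq v$ (so that $k\leq r$), and the flag-transitive consequences $v=|G:H|$ and $r\mid|H|$. The decisive extra input is the subdegree condition: for a $G_\alpha$-orbit $\Delta$ of size $d$ on $\Pmc\setminus\{\alpha\}$, counting the incident pairs $(\beta,B)$ with $\beta\in\Delta$ and $\alpha,\beta\in B$ in two ways gives $\lambda d=rm$ for some integer $m$, so $r\mid\lambda d$; since $\gcd(r,\lambda)=1$ this yields $r\mid d$ for every nontrivial subdegree $d$ of $G$ on $\Pmc$.

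Next I would turn these into a bound on the order of $H$. From $\lambda(v-1)=r(k-1)$ and $k\leq r$ one obtains $\lambda(v-1)<r^2$, hence $v\leq r^2$; combined with $r\leq|H|$ and $v=|G:H|$ this gives $|G|\leq|H|^3$, so that $H$ is a \emph{large} maximal subgroup of $G$. I would then invoke the classification of large maximal subgroups of almost simple groups, which reduces $H$ to a short, explicit list for each socle $X$. For the socles already treated in the literature, namely $X$ alternating, sporadic, $\PSU_n(q)$, or a simple group of exceptional Lie type, I would cite \cite{a:A-Exp-CP,a:ABD-Un-CP,a:Zhou-sym-sporadic,a:Zhan-CP-nonsym-sprodic,a:Zhou-CP-nonsym-alt,a:Zhuo-CP-sym-alt} to recover precisely the designs listed in the Examples.

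The new content is the analysis when $X$ is a linear, symplectic, or orthogonal group, which I would treat socle by socle using the Aschbacher/Kleidman--Liebeck description of maximal subgroups from \cite{b:KL-90}. For each large maximal subgroup $H$ on the list I would test the surviving arithmetic: $v=|G:H|$ is computed, $r$ must divide both $|H|$ and every subdegree $d$ of the primitive action (in particular the minimal subdegree, giving $r\leq d_{\min}$), and then $\lambda=r(k-1)/(v-1)$ must be a positive integer with $\gcd(r,\lambda)=1$ while $b=vr/k\geq v$. In most classes no admissible triple $(r,k,\lambda)$ survives: typically $r\mid d_{\min}$ together with $\lambda(v-1)=r(k-1)$ is incompatible with $\gcd(r,\lambda)=1$, or the forced value of $k$ fails to give an integer $b$.

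The single surviving family is the parabolic action of $X=\PSL_n(q)$ ($n\geq3$) on the points of $\PG_{n-1}(q)$, where $H\cap X$ is the point stabiliser $\,^{\hat{}}[q^{n-1}]{:}\SL_{n-1}(q){\cdot}(q-1)$. For this case I would construct the design explicitly: take $\Pmc$ to be the point set of $\PG_{n-1}(q)$, fix a line $B$ and a point $\alpha\in B$, and set $\Bmc=(B\setminus\{\alpha\})^G$; then verify that this is a $2$-$((q^n-1)/(q-1),q,q-1)$ design admitting $G$ flag-transitively, and that the coprimality $\gcd(r,\lambda)=1$ is equivalent to the numerical condition $\gcd(n-1,q-1)=1$. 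The main obstacle is the classical-group case analysis: for each maximal-subgroup class one needs enough control of the subdegrees --- at least the minimal subdegree --- of the primitive permutation action, which is exactly where the divisibility $r\mid d$ bites; the parabolic and subfield classes, where the orbit structure is most intricate, carry the bulk of the delicate bookkeeping, together with the exact determination of when $\gcd(r,\lambda)=1$ isolates the design above.
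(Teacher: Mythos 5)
Your overall framework (Zieschang reduction to point-primitivity, the subdegree divisibility $r\mid d$, the bound $\lambda v<r^2$ leading to $|G|\leq|H|^3$, reduction to large maximal subgroups via Aschbacher, and citation of the prior work for the already-treated socles) is exactly the paper's strategy, and your elimination plan for the non-parabolic classes matches what the paper does in Propositions~\ref{prop:psl}, \ref{prop:psp} and \ref{prop:orth} (though be aware that $|G|\leq|H|^3$ does not directly give $|X|\leq|H\cap X|^3$ --- there is a factor $|\Out(X)|^2$ in the way, which is why the paper supplements the large-subgroup list with the sharper bound $|G|<|H|\cdot|H|_{p'}^2$ of Corollary~\ref{cor:large-2} and with Lemma~\ref{lem:coprime} for subgroups with $|H\cap X|_p<|\Out(X)|$; and that several cases, e.g.\ the $P_2$ parabolic of $\PSL_n(q)$ and the $\Cmc_8$ classes, require Lemma~\ref{lem:divisible}, rank-$3$ classifications, embedding into biplanes, or \textsf{GAP}, not just the generic arithmetic test).

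The genuine gap is in the surviving case $H\cap X=\,^{\hat{}}[q^{n-1}]{:}\SL_{n-1}(q){\cdot}(q-1)$. There you only propose to \emph{construct} the line-minus-a-point design and verify its parameters; but the theorem is a classification, so you must show that \emph{every} flag-transitive $2$-design on the point set of $\PG_{n-1}(q)$ with $r\mid(q^n-q)/(q-1)$ and $\gcd(r,\lambda)=1$ is either a projective space or that specific design. Knowing $v$ and a divisor of $r$ does not pin down the block orbit: a priori many base blocks $B_1$ could yield admissible parameter sets. This is the content of Proposition~\ref{prop:psl-new}, which is the hardest new step of the paper: it splits on whether $p\mid r$, uses a Sylow $p$-subgroup $S\leq G_{\alpha_1}$ fixing a block through two points to force $W_{B_1}$ into a parabolic of $W=\SL_{n-1}(q)$, computes the possible $W$-orbit lengths $e\Binom{n-1}{h}_{q}$ to force $h=1$ and then $k=q$, identifies $B_1$ inside $\Fix(S_{\alpha_1,\alpha_2})$ as a line minus a point, and in the $p\nmid r$ branch pins $G_{B_1}$ inside a maximal parabolic to force the symmetric case $\Dmc=\PG_{n-1}(q)$. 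None of this is a routine verification, and without it your argument establishes existence of the new family but not the stated classification.
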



Our main result Theorem~\ref{thm:main} together with the recent studies of Biliotti, Francot and Montinaro \cite{a:Biliotti-CP-sol-HA,a:Biliotti-CP-nonsol-HA,a:Biliotti-CP-sym-affine} on affine type automorphism groups will complete the classification of $2$-designs with $\gcd(r,\lambda)=1$ admitting a flag-transitive automorphism group except for one dimensional affine type automorphism groups $G\leq \AGaL_{1}(q)$. 

As another consequence of Theorem~\ref{thm:main}, we are able to extend our result in \cite{a:ABCD-PrimeRep} and to obtain all $2$-designs with prime replication number admitting flag-transitive automorphism groups of almost simple type.

\begin{corollary}\label{cor:rp}
	Let $\Dmc$ be a nontrivial $2$-design with parameters $(v,k,\lambda)$ and prime replication number $r$, and let $\alpha$ be a point of $\Dmc$. If $G$ is a flag-transitive automorphism group of $\Dmc$ of almost simple type with socle $X$, then $\Dmc$ and $G$ are as in lines {\rm 1-3, 7, 11, 13} of {\rm Table~\ref{tbl:main}} or one of the following holds:
	\begin{enumerate}[{\quad \rm (a)}]
		\item $\Dmc$ is the projective space and $X=\PSL_{n}(q)$ for $n\geq 3$ with $r=(q^{n-1}-1)/(q-1)$ prime {\rm (Example~\ref{ex:proj-space})};
		\item $\Dmc$ is the Witt-Bose-Shrikhande space $\W(2^a)$ and $X=\PSL_{2}(2^{a})$ with $r=2^{a}+1$ Fermat prime {\rm (Example~\ref{ex:witt})}.
	\end{enumerate}
\end{corollary}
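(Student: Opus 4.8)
The plan is to derive Corollary~\ref{cor:rp} directly from Theorem~\ref{thm:main} by imposing the extra hypothesis that $r$ is prime. The first step is to check that a prime replication number forces $\gcd(r,\lambda)=1$, so that Theorem~\ref{thm:main} applies. For any nontrivial $2$-$(v,k,\lambda)$ design we have $r(k-1)=\lambda(v-1)$ with $k<v$, whence $r=\lambda(v-1)/(k-1)>\lambda$. Thus $0<\lambda<r$, and since $r$ is prime it cannot divide $\lambda$; therefore $\gcd(r,\lambda)=1$. Theorem~\ref{thm:main} then yields that $(\Dmc,G)$ is one of the configurations in Examples~\ref{ex:proj-space}--\ref{ex:other} (equivalently, the lines of Table~\ref{tbl:main}) or the parabolic $\PSL_n(q)$ design with parameters $2$-$((q^n-1)/(q-1),q,q-1)$.

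Next I would dispose of the parabolic family. Here $k=\lambda+1=q$, so $r=\lambda(v-1)/(k-1)=v-1=q(q^{n-1}-1)/(q-1)$. Since $n\geq 3$, the cofactor $(q^{n-1}-1)/(q-1)=1+q+\cdots+q^{n-2}$ exceeds $1$, and as $q\geq 2$ the integer $r$ is a product of two factors each greater than $1$; hence $r$ is composite and this case does not arise. This is why the parabolic family, although present in Theorem~\ref{thm:main}, is absent from the conclusion of the corollary.

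The core of the argument is then a finite case analysis over the examples of Theorem~\ref{thm:main}. For each line of Table~\ref{tbl:main} I would record the parameters $(v,k,\lambda)$, compute $r=\lambda(v-1)/(k-1)$, and test primality. The entries in lines $1$--$3$, $7$, $11$, $13$ have fixed (small) parameters for which $r$ is directly seen to be prime, while the remaining sporadic lines yield a composite $r$ (typically $r$ is even or otherwise visibly factors) and are discarded. The two infinite families requiring care are the projective space of Example~\ref{ex:proj-space}, where taking lines as blocks gives $r=(q^{n-1}-1)/(q-1)$ and hence case~(a) precisely when this value is prime, and the Witt--Bose--Shrikhande space $\W(2^a)$ of Example~\ref{ex:witt} with $r=2^a+1$, which is prime exactly when it is a Fermat prime, i.e.\ when $a$ is a power of $2$, giving case~(b).

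The main obstacle is the bookkeeping in this last step: one must have the parameters of all seven infinite families and eleven sporadic designs in hand and, for each infinite family other than the projective and Witt--Bose--Shrikhande spaces, exhibit an explicit nontrivial divisor of $r$ that rules out primality for every admissible parameter. No new structural input is needed beyond Theorem~\ref{thm:main}; the entire work lies in the elementary primality analysis of the tabulated replication numbers.
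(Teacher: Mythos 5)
Your overall strategy is sound and is genuinely different from the paper's: you derive everything from Theorem~\ref{thm:main} (after the correct observation that $r(k-1)=\lambda(v-1)$ with $k-1<v-1$ forces $\lambda<r$, hence $\gcd(r,\lambda)=1$ for $r$ prime), whereas the paper simply invokes the prior classification in \cite{a:ABCD-PrimeRep} for designs with prime replication number and only uses Theorem~\ref{thm:main} to finish the one outstanding case, namely $X=\PSL_n(q)$ with parabolic point-stabiliser, where it rules out the new family exactly as you do, by noting that $r=q(q^{n-1}-1)/(q-1)$ is a product of two factors exceeding $1$. Your route is more self-contained; the paper's is shorter because the heavy lifting was already done elsewhere.

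However, there is a concrete error in your final bookkeeping step. You assert that the Table~\ref{tbl:main} lines outside $1$--$3$, $7$, $11$, $13$ all "yield a composite $r$". This is false for two lines. Line $12$ has $(v,b,r,k,\lambda)=(15,15,7,7,3)$, so $r=7$ is prime; this is harmless only because that design is $\PG_3(2)$ and is therefore absorbed into your case (a) with $(n,q)=(4,2)$, $r=(2^3-1)/(2-1)=7$ --- a point you must make explicitly, since otherwise your case analysis does not land in the stated conclusion. More seriously, line $6$ is the Hadamard $2$-$(11,5,2)$ design with flag-transitive $\PSL_2(11)$ and $r=k=5$ prime; it is not a projective space, not a Witt--Bose--Shrikhande space, and not among lines $1$--$3$, $7$, $11$, $13$, so a correct execution of your case analysis produces it as an additional possibility that the corollary's conclusion does not list. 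You must either exhibit a reason this configuration is excluded or flag it as an apparent omission; as written, the blanket compositeness claim papers over a case that does not go away. (The infinite families are fine: the Hermitian and Ree unitals, the Ree designs and the Suzuki designs have $r=q^2$, $q^3$ or $q^2$ with $q$ a proper prime power, hence composite, exactly as you indicate.)
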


Symmetric designs admitting flag-transitive automorphism groups are of most interest.
Such designs with $\gcd(k,\lambda)=1$ admitting a flag-transitive almost simple automorphism group whose socle is an alternating group, a sporadic simple group, a projective special unitary group or a finite simple exceptional group have been studied in~\cite{a:A-Exp-CP,a:ABD-Un-CP,a:Zhou-sym-sporadic,a:Zhuo-CP-sym-alt}. Biliotti and Montinaro \cite{a:Biliotti-CP-sym-affine} studied the  affine type automorphism groups $G$ of symmetric designs with $\gcd(k,\lambda)=1$, and proved that $G\leq \AGaL_{1}(q)$, for some odd prime power $q$. Therefore, as a main consequence of Theorem~\ref{thm:main}, excluding flag-transitive  $1$-dimensional affine  automorphism groups, we prove that projective spaces and the unique Hadamard design with parameters  $(11,5,2)$ are the only examples of nontrivial symmetric $(v,k,\lambda)$ designs with $\gcd(k,\lambda)=1$ admitting a flag-transitive automorphism group:

\begin{corollary}\label{cor:main}
	Let $\Dmc$ be a nontrivial symmetric $(v,k,\lambda)$ design with $\gcd(k,\lambda)=1$ admitting a flag-transitive automorphism group $G$, then either $v=p^{d}$ is odd and $G\leq \AGaL_{1}(p^{d})$ is point-primitive and block-primitive, or $\Dmc$ is a projective space $\PG_{n-1}(q)$ as in {\rm Example~\ref{ex:proj-space}} or the unique Hadamard design with parameters  $(11,5,2)$ as in line $6$ of {\rm Table~\ref{tbl:main}}.
\end{corollary}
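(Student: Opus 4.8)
The plan is to combine the two classification theorems already at our disposal, after one elementary reduction. First I would record that in a symmetric $2$-$(v,k,\lambda)$ design one has $v=b$, whence $r=k$; the hypothesis $\gcd(k,\lambda)=1$ is therefore identical with $\gcd(r,\lambda)=1$. This places us exactly under the hypotheses of Zieschang's theorem \cite{a:Zieschang-88}, so $G$ is point-primitive and is of affine or almost simple type, and the proof divides along these two types.

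In the affine case I would quote the theorem of Biliotti and Montinaro \cite{a:Biliotti-CP-sym-affine}, which gives $G\leq \AGaL_1(q)$ with $q=p^d$ odd, so that $v=q$ is odd. Point-primitivity is already furnished by Zieschang. For block-primitivity I would pass to the dual design $\Dmc^{*}$: since $\Dmc$ is symmetric, $\Dmc^{*}$ is again a symmetric $2$-$(v,k,\lambda)$ design on which $G$ acts flag-transitively, and the action of $G$ on the points of $\Dmc^{*}$ is its action on the blocks of $\Dmc$. As the parameters are unchanged we still have $\gcd(r^{*},\lambda)=\gcd(k,\lambda)=1$, so Zieschang applied to $\Dmc^{*}$ yields point-primitivity on $\Dmc^{*}$, that is, block-primitivity on $\Dmc$. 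This establishes the first alternative.

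In the almost simple case I would invoke Theorem~\ref{thm:main} and then keep only those designs in its conclusion that are symmetric, i.e. those with $r=k$. The parabolic $\PSL_n(q)$ family is eliminated at once: there $k=q$ while $r=\lambda(v-1)/(k-1)=q(q^{n-1}-1)/(q-1)$, so $r=k$ would force $q^{n-1}=q$, i.e. $n=2$, contrary to $n\geq 3$. It then remains to run through Examples~\ref{ex:proj-space}--\ref{ex:other} (equivalently the lines of Table~\ref{tbl:main}) and retain precisely those with $v=b$. These are the point-hyperplane design of the projective space $\PG_{n-1}(q)$ of Example~\ref{ex:proj-space}, for which the relation $k=q\lambda+1$ forces $\gcd(k,\lambda)=\gcd(q\lambda+1,\lambda)=1$ automatically, and the unique Hadamard $2$-$(11,5,2)$ design of line $6$ of Table~\ref{tbl:main}, for which $\gcd(5,2)=1$; all remaining entries are non-symmetric and are discarded.

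I expect the principal labour to lie in this last scan: one must verify, family by family, that every design produced by Theorem~\ref{thm:main} apart from the projective space and the $(11,5,2)$ design fails the symmetry condition $\lambda(v-1)=k(k-1)$. Because the parameters $(v,k,\lambda)$ are explicit in each case, every such check is a direct computation, but the number of families makes this the most tedious step. By contrast the affine case and the duality argument for block-primitivity are short once the cited theorems are in place.
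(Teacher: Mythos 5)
Your proposal is correct and follows essentially the same route as the paper: reduce via Zieschang's theorem (Proposition~\ref{prop:flag}) to the affine and almost simple cases, quote Biliotti--Montinaro for the affine case, and filter the conclusion of Theorem~\ref{thm:main} for symmetric designs, leaving only $\PG_{n-1}(q)$ and the $2$-$(11,5,2)$ Hadamard design. The only (harmless) embellishments are your duality argument for block-primitivity, which the paper instead attributes directly to \cite{a:Biliotti-CP-sym-affine}, and your explicit computation eliminating the parabolic $2$-$((q^{n}-1)/(q-1),q,q-1)$ family, which the paper disposes of by citing Propositions~\ref{prop:psl2} and~\ref{prop:psl}.
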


\subsection{Outline of the proof}\label{sec:outline}

In order to prove Theorem~\ref{thm:main} and its corollaries in Section~\ref{sec:proof}, we observe that the group $G$ is point-primitive~\cite{a:Zieschang-88}, or equivalently, the point-stabiliser $H=G_\alpha$ is maximal in $G$. Moreover, we only need to focus on the case where $X$ is a nonabelian finite simple classical group but not a projective special unitary group ~\cite{a:A-Exp-CP,a:ABD-Un-CP,a:Zhou-sym-sporadic,a:Zhan-CP-nonsym-sprodic,a:Zhou-CP-nonsym-alt,a:Zhuo-CP-sym-alt}. By Aschbacher's Theorem~\cite{a:Aschbacher-84}, the maximal subgroup $H$ belongs to  one of the eight geometric families $\Cmc_{i}$ ($i=1,\ldots ,8$) of subgroups of $G$, or it is in the family $\Smc$ of almost simple subgroups with some irreducibility conditions. We then obtain the subgroups $H$ satisfying $|H \cap X|_{p} < |\Out(X)|$ in  Lemma~\ref{lem:coprime}, and together with the list of large maximal subgroups of almost simple groups satisfying $|X|\leq |H\cap X|^3$ recorded in \cite{a:AB-Large-15}, we obtain possible candidates for subgroups $H$. We then analyse these possible cases and prove the main results. In particular, we use detailed information of the subdegrees of the primitive actions of finite simple classical groups. We note here that for computational arguments, we use the software \textsf{GAP} \cite{GAP4}.

\subsection{Definitions and notation}\label{sec:defn}

All groups and incidence structures in this paper are finite. Symmetric and alternating groups on $n$ letters are denoted by $\S_{n}$ and $\A_{n}$, respectively.  We write ``$n$'' for the cyclic group of order $n$. For finite simple groups of Lie type, we adopt the standard notation as in \cite{b:Atlas}, and in particular, we use the following notation to denote the finite simple classical groups:
\begin{align*}
	&\PSL_{n}(q), \text{ for } n\geq 2 \text{ and } (n,q)\neq (2,2), (2,3),\\
	&\PSU_{n}(q), \text{ for } n\geq 3 \text{ and } (n,q)\neq (3,2),\\
	&\PSp_{2m}(q), \text{ for }n=2 m \geq 4 \text{ and } (m,q)\neq (2,2),\\
	&\POm_{2m+1}(q), \text{ for } n=2m+1\geq 7 \text{ and } q \text{ odd},\\
	&\POm_{2m}^{\pm}(q),  \text{ for } n=2m\geq 8.
\end{align*}
In this manner, the only repetitions are
\begin{align*}
	\nonumber  &\PSL_{2}(4)\cong \PSL_{2}(5)\cong \A_{5}, & &
	\PSL_{2}(7)\cong \PSL_{3}(2), & &
	\PSL_{2}(9)\cong \A_{6},\\
	&\PSL_{4}(2)\cong \A_{8}, & &\PSp_{4}(3)\cong \PSU_{4}(2).
\end{align*}

Recall that a $2$-design $\Dmc$ with parameters  $(v,k,\lambda)$ is a pair $(\Pmc,\Bmc)$, where $\Pmc$ is a set of $v$ points and $\Bmc$ is a set of $b$ blocks such that each block is a $k$-subset of $\Pmc$ and each two distinct points are contained in $\lambda$ blocks. We say that $\Dmc$ is nontrivial if $2 < k < v-1$.
If $b=v$, or equivalently $r=k$, the design $\Dmc$ is called a symmetric design, otherwise it is called nonsymmetric, where the replication number $r$ is the number of blocks incident with a given point. For a nonsymmetric design, we always have $b>v$ and $r>k$.
Further notation and definitions in both design theory and group theory are standard and can be found, for example, in~\cite{b:Atlas,b:Dixon,t:Kleidman,b:lander}.

\section{Examples}\label{sec:examples}

In this section, we provide some examples of $2$-designs with $\gcd(r,\lambda)=1$ admitting a flag-transitive automorphism almost simple group with socle $X$. The $2$-designs in Example~\ref{ex:proj-space}-\ref{ex:ree-designs} arose naturally in the study of linear spaces \cite{a:BDD-1988,a:Kantor-87-Odd,a:Saxl2002}, Examples~\ref{ex:ree-designs} and~\ref{ex:suzuki} appear in \cite{a:A-Exp-CP} when the socle $X$ of $G$ is a finite simple exceptional group, and the $2$-designs in Example~\ref{ex:other} are obtained in \cite{a:ABD-Un-CP,a:Zhou-sym-sporadic,a:Zhan-CP-nonsym-sprodic,a:Zhou-CP-nonsym-alt,a:Zhuo-CP-sym-alt}.   We note here that the examples of symmetric designs occur only in Examples~\ref{ex:proj-space} and~\ref{ex:other}.

\begin{example}\label{ex:proj-space}
	The projective spaces $\PG_{n-1}(q)$ with parameters  $((q^{n}-1)/(q-1),(q^{n-1}-1)/(q-1),(q^{n-2}-1)/(q-1))$ for $n\geq 3$ is a well-known example of flag-transitive symmetric designs. Any group $G$ with $\PSL_{n}(q) \leq G \leq \PGaL_{n}(q)$ acts flag-transitively on $\PG_{n-1}(q)$. If $n=3$, then we have the \emph{Desargusian plane} with parameters $(q^{2}+q+1,q+1,1)$ which is a projective plane. We remark that there is one additional example with $G=\A_7$ on $\Dmc=\PG_{3}(2)$, see \cite{a:BDD-1988,a:Kantor-87-Odd}.
\end{example}

\begin{example}[Witt-Bose-Shrikhande spaces]\label{ex:witt}
	This space is a $2$-design with parameters  $(2^{a-1}(2^{a}-1), 2^{a-1},1)$ which can be defined from the group $\PSL_2(q)$ with $q=2^{a}$ for $a\geq 3$ \cite{a:BDD-1988}. In this incidence structure which is denoted by $\W(q)$, the points are the dihedral subgroups of $\PSL_{2}(q)$ of order $2(q+1)$, the blocks are the involutions of $\PSL_2(q)$, and a point is incident with a block precisely when the dihedral subgroup contains the involution. An almost simple group $G$ with socle $X=\PSL_2(q)$ acts flag-transitively on Witt-Bose-Shrikhande space. Moreover, this space is not a symmetric design.
\end{example}

\begin{example}[Hermitian unitals]\label{ex:hermition}
	The Hermitian unital with parameters  $(q^{3}+1,q+1,1)$ is a well-known example of flag-transitive $2$-designs \cite{a:Kantor-85-Homgeneous}. Let $V$ be a three-dimensional vector space over the field $\Fbb_{q^2}$ with a non-degenerate Hermitian form. The Hermitian unital is an incidence structure whose points are $q^3+1$ totally isotropic $1$-spaces in $V$, the blocks are the sets of $q-1$ points lying in a non-degenerate $2$-space, and the incidence is given by inclusion. This structure is not symmetric and any group $G$ with $\PSU_3(q)\leq G\leq \PGU_3(q)$ acts flag-transitively on Hermitian unital design.
\end{example}

\begin{example}[Ree unitals]\label{ex:ree-unital}
	The Ree Unital spaces $\U_{R}(q)$ are first discovered by L\"{u}neburg \cite{a:Luneburg-66}, and these examples arose from studying flag-transitive linear spaces \cite{a:Kantor-85-Homgeneous,a:Kleidman-Exp}. This design has parameters  $(q^{3}+1,q+1,1)$ with $q=3^a\geq 27$.   The points and blocks of $\U_{R}(q)$ are the Sylow $3$-subgroups and the involutions of $^{2}\!\G_{2}(q)$, respectively, and a point is incident with a block if the  block normalizes the  point. This incidence structure is a linear space and any group with $^{2}\!\G_{2}(q)\leq G\leq \Aut(^{2}\!\G_{2}(q))$ acts flag-transitively. This design is not symmetric. Note for $q=3$ that the Ree Unital $\U_{R}(3)$ is isomorphic to the  Witt-Bose-Shrikhande space $\W(8)$ as $^{2}\!\G_{2}(3)'$ is isomorphic to $\PSL_{2}(8)$.
\end{example}

\begin{example}[Ree designs]\label{ex:ree-designs}
	Suppose that $G$ is an almost simple group with socle $X={}^2\!\G_{2}(q)$ for $q=2^a$ and $a\geq 3$ odd. Let $H$, $K_{1}$ and $K_{2}$ be subgroups of $G$ such that $H\cap X\cong q^{3}{:}(q-1)$, $K_{1}\cap X=q{:}(q-1)$ and $K_{2}\cap X \cong q^{2}{:}(q-1)$. The coset geometries $(X,H\cap X,K_{i}\cap X)$ may give rise to the $2$-designs with parameters  $v=q^3+1$, $b=q^{3-i}(q^{3}+1)$, $r=q^{3}$,  $k=q^{i}$ and $\lambda=q^{i}-1$, for $i=1,2$. Since $G$ is $2$-transitive on the points set of this structure and $\gcd(r,\lambda) = 1$,  $X$ is flag-transitive \cite[2.3.8]{b:Dembowski}. Note that $H\cap K_{i} \cap X$ is a cyclic group of order $q-1$. Let $B_{i}$ be an orbit of $K_{i}\cap X$ of length $k=q^{i}$ with $i=1,2$. If $\Pmc=\{1,\ldots,v\}$, then since $X$ is $2$-transitive, \cite[Proposition~4.6]{b:Beth-I} gives rise to a $2$-design $\Dmc_{i}=(\Pmc,B_{i}^{X})$ with parameters $(q^3+1,q^{i},q^{i}-1)$, for $i=1,2$, which is not symmetric, and the group $G$ is flag-transitive on $\Dmc_i$. For $q=27$,  in \cite[Table 1]{a:A-Exp-CP}, we introduced base blocks for these type of designs, and explicit constructions for these designs are given in \cite{a:D-Ree}.
\end{example}

\begin{example}[Suzuki designs]\label{ex:suzuki}
	Suppose that $G$ is an almost simple group with socle $X={}^2\!\B_{2}(q)$ for $q=2^a$ and $a\geq 3$ odd. Let $H$ and  $K$ be subgroups of $G$ such that $H\cap X\cong q^{2}{:}(q-1)$ and $K\cap X \cong q{:}(q-1)$. The coset geometry $(X,H\cap X,K\cap X)$ may give rise to a $2$-design with parameters  $v=q^2+1$, $b=q(q^{2}+1)$, $r=q^{2}$,  $k=q$ and $\lambda=q-1$,  see \cite{a:A-Exp-CP}. By \cite[2.3.8]{b:Dembowski}, $X$ is flag-transitive. If $\Pmc=\{1,\ldots,v\}$ and  $B$ is an orbit of $K\cap X$ of length $k=q$, then by \cite[Proposition~4.6]{b:Beth-I}, $(\Pmc,B^{X})$ is a $2$-design  with parameters $(q^2+1,q,q-1)$ which is not symmetric.
	For $q\in \{8,32\}$, we construct these type of designs with explicit base blocks in \cite[Table~1]{a:A-Exp-CP}, and in \cite{a:A-Sz} explicit constructions of these designs are given. There is another subtle construction for these designs in geometry, by taking points as ovoids in $\PG_{3}(q)$ and blocks as pointed conics minus the distinguished points.    
\end{example}

\begin{example}\label{ex:other}
	The design $\Dmc=(\Pmc, \Bmc)$ with parameters $(v,k,\lambda)$ listed in Table~\ref{tbl:main} is the unique design with flag-transitive automorphism
	group $G$ as in the seventh column of Table~\ref{tbl:main}. The base block, point-stabiliser and block-stabiliser of $\Dmc$ are also given in the same table with appropriate references in each case. The base blocks for the designs in lines $2$, $6$ and $12$ are $\{1,2,3\}$, $\{1,2,3,5,11\}$ and $\{1, 2, 3, 8, 9, 10, 11\}$, respectively.
	We note here that the designs in line $2$ is the well-known projective plane, namely Fano plane, and the design in line $12$ is viewed as a projective space, see Example~\ref{ex:proj-space}.
\end{example}
\begin{table}
	\caption{ Some nontrivial $2$-design $\Dmc$ with $\gcd(r,\lambda)=1$ admitting flag-transitive and point-primitive automorphism group $G$.}\label{tbl:main}
	\resizebox{\textwidth}{!}{
		\begin{tabular}{clllllllll}
			\noalign{\smallskip}\hline\noalign{\smallskip}
			Line &
			$v$ &
			$b$ &
			$r$ &
			$k$ &
			$\lambda$ &
			$G$ &
			$G_{\alpha}$&
			$G_{B}$ &
			$\Aut(\Dmc)$
			\\
			\noalign{\smallskip}\hline\noalign{\smallskip}
			\multirow{2}{*}{$1$} &
			$6$ &
			$10$ &
			$5$ &
			$3$ &
			$2$ &
			$\PSL_{2}(5)$ &
			$\D_{10}$ &
			$\S_{3}$ &
			$\PSL_{2}(5)$
			\\
			& \multicolumn{7}{l}{Base block: \{1,2,3\}} &
			\multicolumn{2}{l}{References: \cite{b:Handbook,a:Zhou-PSL2-non-sym,a:Zhou-CP-nonsym-alt}} \\
			\noalign{\smallskip}\hline\noalign{\smallskip}
			\multirow{2}{*}{$2$} &
			$7$ &
			$7$ &
			$3$ &
			$3$ &
			$1$ &
			$\PSL_{2}(7)$ &
			$\S_{4}$ &
			$\S_{4}$ &
			$\PSL_{2}(7)$
			\\
			& \multicolumn{7}{l}{Base block: \{1,2,3\}, $\PG_{2}(2)$} &
			\multicolumn{2}{l}{References: \cite{a:ABD-PSL2,b:Handbook,a:Kantor-85-2-trans,a:Regueiro-reduction}}\\
			\noalign{\smallskip}\hline\noalign{\smallskip}
			\multirow{2}{*}{$3$} &
			$8$ &
			$14$ &
			$7$ &
			$4$ &
			$3$ &
			$\PSL_{2}(7)$&
			$7{:}3$ &
			$\A_{4}$ &
			$2^{3}{:}\PSL_{2}(7)$
			\\
			& \multicolumn{7}{l}{Base block: \{1,2,3,5\}}
			&
			\multicolumn{2}{l}{References: \cite{a:ABD-Un-CP}}\\
			\noalign{\smallskip}\hline\noalign{\smallskip}
			\multirow{2}{*}{$4$} &
			$28$ &
			$36$ &
			$9$ &
			$7$ &
			$2$ &
			$\PSL_{2}(8)$ &
			$\D_{18}$ &
			$\D_{14}$ &
			$\PSL_{2}(8){:}3$
			\\
			& \multicolumn{7}{l}{Base block: \{1,6,12,13,14,24,28\}}&
			\multicolumn{2}{l}{References: \cite{b:Handbook,a:Zhou-PSL2-non-sym}} \\
			\noalign{\smallskip}\hline\noalign{\smallskip}
			\multirow{2}{*}{$5$} &
			$10$ &
			$15$ &
			$9$ &
			$6$ &
			$5$ &
			$\PSL_{2}(9)$&
			$3^2{:}4$&
			$\S_{4}$ &
			$\S_{6}$
			\\
			& \multicolumn{7}{l}{Base block: \{1,2,3,4,5,6\}} &
			\multicolumn{2}{l}{References: \cite{a:ABD-PSL2,b:Handbook,a:Zhou-PSL2-non-sym,a:Zhou-CP-nonsym-alt}} \\
			\noalign{\smallskip}\hline\noalign{\smallskip}
			\multirow{2}{*}{$6$} &
			$11$ &
			$11$ &
			$5$ &
			$5$ &
			$2$ &
			$\PSL_{2}(11)$ &
			$\PSL_{2}(5)$ &
			$\PSL_{2}(5)$ &
			$\PSL_{2}(11)$
			\\
			& \multicolumn{7}{l}{Base block: \{1,2,3,5,11\}, a Hadamard design} &
			\multicolumn{2}{l}{References: \cite{a:ABD-PSL2,b:Handbook,a:Kantor-85-2-trans,a:Regueiro-reduction}} \\
			\noalign{\smallskip}\hline\noalign{\smallskip}
			\multirow{2}{*}{$7$} &
			$12$ &
			$22$ &
			$11$ &
			$6$ &
			$5$ &
			$\M_{11}$ &
			$\PSL_{2}(11)$ &
			$\A_{6}$ &
			$\M_{11}$
			\\
			& \multicolumn{7}{l}{Base block: \{ 1, 2, 3, 4, 11, 12\}} &
			\multicolumn{2}{l}{References: \cite{a:Zhan-CP-nonsym-sprodic}}\\
			\noalign{\smallskip}\hline\noalign{\smallskip}
			\multirow{2}{*}{$8$} &
			$22$ &
			$77$ &
			$21$ &
			$6$ &
			$5$ &
			$\M_{22}$ &
			$\PSL_{3}(4)$ &
			$2^{4}{:}\A_{6}$ &
			$\M_{22}$
			\\
			& \multicolumn{7}{l}{Base block: \{  1, 2, 3, 4, 7, 18 \}} &
			\multicolumn{2}{l}{References: \cite{a:Zhan-CP-nonsym-sprodic}}\\
			\noalign{\smallskip}\hline\noalign{\smallskip}
			\multirow{2}{*}{$9$} &
			$22$ &
			$77$ &
			$21$ &
			$6$ &
			$5$ &
			$\M_{22}{:}2$ &
			$\PSL_{3}(4){:}2$ &
			$2^{4}{:}\S_{6}$ &
			$\M_{22}{:}2$
			\\
			& \multicolumn{7}{l}{Base block: \{ 1, 2, 3, 4, 7, 18 \}} &
			\multicolumn{2}{l}{References: \cite{a:Zhan-CP-nonsym-sprodic}}\\
			\noalign{\smallskip}\hline\noalign{\smallskip}
			\multirow{2}{*}{$10$} &
			$10$ &
			$15$ &
			$9$ &
			$6$ &
			$5$ &
			$\S_{6}$&
			$\S_{3}^{2}{:}2$&
			$\S_{4}{\times}2$ &
			$\S_6$
			\\
			& \multicolumn{7}{l}{Base block: \{1,2,3,4,5,6\}} &
			\multicolumn{2}{l}{References: \cite{a:ABD-PSL2,b:Handbook,a:Zhou-PSL2-non-sym,a:Zhou-CP-nonsym-alt}}\\
			\noalign{\smallskip}\hline\noalign{\smallskip}
			\multirow{2}{*}{$11$} &
			$15$ &
			$35$ &
			$7$ &
			$3$ &
			$1$ &
			$\A_7$  &
			$\PSL_2(7)$ &
			$(3\times \A_{4}){:}2$ &
			$\A_7$
			\\
			& \multicolumn{7}{l}{Base block: \{1, 4, 5\}} &
			\multicolumn{2}{l}{References: \cite{b:Handbook,a:Kantor-85-2-trans,a:Zhou-CP-nonsym-alt}}\\
			\noalign{\smallskip}\hline\noalign{\smallskip}
			\multirow{2}{*}{$12$} &
			$15$ &
			$15$ &
			$7$ &
			$7$ &
			$3$ &
			$\A_7$ &
			$\PSL_2(7)$ &
			$\PSL_2(7)$ &
			$\A_7$
			\\
			& \multicolumn{7}{l}{Base block: \{1, 2, 3, 8, 9, 10, 11\},  $\PG_{3}(2)$}&
			\multicolumn{2}{l}{References: \cite{b:Handbook,a:Zhuo-CP-sym-alt}}\\
			\noalign{\smallskip}\hline\noalign{\smallskip}
			\multirow{2}{*}{$13$} &
			$15$ &
			$35$ &
			$7$ &
			$3$ &
			$1$ &
			$\A_{8}$ &
			$2^3{:}\PSL_3(2)$&
			$\A_{4}^{2}{:}2{:}2$ &
			$\A_{8}$
			\\
			& \multicolumn{7}{l}{Base block: \{1, 4, 5\}} &
			\multicolumn{2}{l}{References: \cite{b:Handbook,a:Zhou-CP-nonsym-alt}}\\
			\noalign{\smallskip}\hline\noalign{\smallskip}
			Comments & \multicolumn{9}{p{10cm}}{The subgroups $G_{\alpha}$ and $G_{B}$ are point-stabiliser and block-stabiliser, respectively.  }\\
			& \multicolumn{9}{p{10cm}}{ The first row in each line consists of $v$, $b$, $r$, $k$, $\lambda$, $G$, $G_{\alpha}$, $G_{B}$, $\Aut(\Dmc)$   }\\
			& \multicolumn{9}{p{10cm}}{ The second row in each line consists of a base block for $\Dmc$, the name of the design if it is known and appropriate references.}\\
			
		\end{tabular}
	}
\end{table}

\section{Preliminaries}\label{sec:pre}

In this section, we state some useful facts in both design theory and group theory. Lemma \ref{lem:New} below is an elementary result on subgroups of almost simple groups.

\begin{lemma}\label{lem:New}{\rm \cite[Lemma 2.2]{a:ABD-PSL2}}
	Let $G$ be an almost simple group with socle $X$, and let $H$ be maximal in $G$ not containing $X$. Then $G=HX$ and $|H|$ divides $|\Out(X)|{\cdot}|H\cap X|$.
\end{lemma}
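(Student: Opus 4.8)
The plan is to obtain both conclusions by purely formal group theory, exploiting only that $X$ is normal in $G$, that $H$ is maximal, and the defining inclusions $X\unlhd G\leq \Aut(X)$ of an almost simple group. First I would prove $G=HX$. Since $X\unlhd G$, the product $HX$ is a subgroup of $G$, and it clearly contains $H$. By maximality of $H$ we have either $HX=H$ or $HX=G$. The first alternative would force $X\leq H$, contrary to the hypothesis that $H$ does not contain $X$; hence $HX=G$.

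Next I would read off the index $|H:H\cap X|$ from the second isomorphism theorem. As $X\unlhd G$ and $G=HX$, we get $H/(H\cap X)\cong HX/X=G/X$, so $|H|=|H\cap X|\cdot|G:X|$. It then remains to bound $|G:X|$. Here I would use that $G$ is almost simple with socle $X$: since $X$ is nonabelian simple we have $\Z(X)=1$, so $X\cong\Inn(X)$, and the inclusions $X\unlhd G\leq\Aut(X)$ identify $X$ with $\Inn(X)$. Consequently $G/X$ embeds into $\Aut(X)/\Inn(X)=\Out(X)$, whence $|G:X|$ divides $|\Out(X)|$. Substituting this into the identity $|H|=|H\cap X|\cdot|G:X|$ shows that $|H|$ divides $|\Out(X)|\cdot|H\cap X|$, which is the assertion.

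The argument is entirely formal and I do not expect any genuine obstacle; the result is essentially a packaging of the second isomorphism theorem with the structure of an almost simple group. The only step that warrants a moment's care is the final one, namely the legitimacy of regarding $G/X$ as a subgroup of $\Out(X)$. This rests on the identification of $X$ with $\Inn(X)$ (equivalently $\C_G(X)=1$), which is exactly what the phrase ``almost simple with socle $X$'' encodes via $X\unlhd G\leq\Aut(X)$, so no additional work beyond invoking the definition is needed.
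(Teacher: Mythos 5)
Your argument is correct and is the standard one: the paper itself only cites \cite[Lemma 2.2]{a:ABD-PSL2} for this fact, and the proof there is exactly this combination of maximality of $H$ forcing $G=HX$, the second isomorphism theorem giving $|H:H\cap X|=|G:X|$, and the embedding $G/X\leq\Out(X)$ coming from $X\unlhd G\leq\Aut(X)$. No gaps.
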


\begin{lemma}\label{lem:Tits}{\rm (Tits' Lemma~\cite[1.6]{a:Seitz-TitsLemma})}
	If $X$ is a simple group of Lie type in characteristic $p$, then any proper subgroup of index prime to $p$ is contained in a parabolic subgroup
	of $X$.
\end{lemma}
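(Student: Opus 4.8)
The plan is to convert the coprimality hypothesis into a Sylow statement and then locate $H$ inside a parabolic via the geometry of the $(B,N)$-pair of $X$. First I would note that $|X:H|$ being prime to $p$ forces $|H|$ to contain the full $p$-part of $|X|$, so by Sylow's theorem $H$ contains a Sylow $p$-subgroup of $X$; after replacing $H$ by a conjugate I may assume $U\le H$, where $U$ is the unipotent radical of a fixed Borel subgroup $B$ (a Sylow $p$-subgroup of $X$) and $N_X(U)=B$. The key reduction is that it suffices to show $H$ lies in \emph{some} parabolic: indeed, any parabolic $P$ containing $U$ must contain $B$ (since $U$ is then a Sylow $p$-subgroup of $P$, its image is the unipotent radical of a Borel of the Levi quotient, and $N_X(U)=B$ forces that Borel to be $B$), and the subgroups of $X$ containing $B$ are exactly the standard parabolics $P_J$ ($J\subseteq S$). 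Hence $\langle H,B\rangle=P_J$ is a parabolic with $H\le P_J$, and the whole problem collapses to proving $\langle H,B\rangle\ne X$, i.e.\ $J\ne S$.

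The engine for this is the Borel--Tits theorem: for every nontrivial $p$-subgroup $Q$ of $X$ there is a proper parabolic subgroup $P$ with $N_X(Q)\le P$ (indeed $Q\le R_u(P)$). Granting this, it is enough to exhibit a nontrivial normal $p$-subgroup of $H$. If $Q:=O_p(H)\ne 1$, then $H$ normalizes $Q$, so $H\le N_X(O_p(H))\le P$ for a proper parabolic $P$; the parabolic is proper because $X$ is simple and hence $O_p(X)=1$. Thus the entire statement reduces to the single assertion $O_p(H)\ne 1$ (equivalently $J\ne S$ above), where I would use that $O_p(H)=\bigcap_{h\in H}U^{h}$ is precisely the common intersection of the Sylow $p$-subgroups of $H$.

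Establishing $O_p(H)\ne 1$ is where I expect the real difficulty to lie. The available local information is geometric: $U$ fixes a unique chamber $C_0$ of the Tits building $\Delta$ of $X$ --- a chamber $gB$ is $U$-fixed iff $g^{-1}Ug\le B$, and Sylow's theorem in $B$ together with $N_X(U)=B$ forces $g\in B$ --- so the fixed subcomplex $\Delta^{U}$ is precisely the closed fundamental chamber, while the total number of chambers $|X:B|=\sum_{w\in W}q^{\ell(w)}\equiv 1\pmod p$. The obstacle is to promote this $U$-local picture to an $H$-global one: $O_p(H)$ fixes the entire $H$-orbit of $C_0$, and one must show this pointwise stabiliser is nontrivial. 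This passage from ``$H$ contains a full Sylow $p$-subgroup'' to ``$H$ normalizes a nontrivial $p$-group'' is exactly the content of the root-group analysis underlying Borel--Tits; alternatively one can argue by induction on the Lie rank, intersecting $H$ with the maximal parabolics $P_i\supseteq B$ (each $H\cap P_i$ again has index prime to $p$ in $P_i$ and contains $U$) and passing to the Levi quotients. I would treat this inductive and geometric step as the heart of the proof and rely on the Borel--Tits theorem to supply it.
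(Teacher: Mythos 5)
The paper offers no proof of this lemma --- it is quoted verbatim from Seitz \cite[1.6]{a:Seitz-TitsLemma} --- so the only question is whether your blind argument is complete, and it is not. Everything up to the final step is correct: $H$ contains a Sylow $p$-subgroup $U$ of $X$ with $N_X(U)=B$; any parabolic containing $U$ contains $B$; the overgroups of $B$ are exactly the standard parabolics $P_J$; hence the lemma is equivalent to the single assertion $\langle H,B\rangle\neq X$ and, via Borel--Tits, also to $O_p(H)\neq 1$. But the Borel--Tits theorem cannot ``supply'' this last assertion, as you suggest it might: it takes a nontrivial normal $p$-subgroup of $H$ as \emph{input} and returns a proper parabolic overgroup; it says nothing about whether a proper subgroup of $p'$-index possesses such a subgroup. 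Since the conclusion of the lemma conversely yields $O_p(H)\supseteq R_u(P)\neq 1$, the statement $O_p(H)\neq 1$ is not a reduction of Tits' Lemma but an equivalent reformulation of it, so your argument is circular at precisely the decisive point. The alternative you float --- induction on the Lie rank through the maximal standard parabolics $P_i\supseteq B$ --- is also not an argument as it stands: knowing that each image of $H\cap P_i$ in the Levi quotient $L_i$ lies in a parabolic of $L_i$ (or is all of $L_i$) does not by itself produce a proper parabolic of $X$ containing $H$.

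The missing ingredient is the Bruhat-decomposition computation at the core of Tits' original argument: writing $h\in H$ as $h\in Bn_wB$ and using the root-subgroup structure, one shows that $\langle U,U^h\rangle$ contains conjugates of the rank-one subgroups $\langle X_\alpha,X_{-\alpha}\rangle$ attached to suitable simple roots $\alpha$, and that the set $J$ of simple roots arising this way over all $h\in H$ satisfies both $H\leq P_J$ and, should $J$ be all of the simple system, $H\supseteq\langle X_{\pm\alpha}:\alpha\ \text{simple}\rangle=X$; the properness of $H$ then forces $J\neq S$ and $H\leq P_J<X$. That root-subgroup analysis --- or, as the paper does, an explicit appeal to \cite[1.6]{a:Seitz-TitsLemma} --- is exactly what your proposal labels ``the heart of the proof'' and leaves unproven; without it the lemma is not established.
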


If a group $G$ acts on a set $\Pmc$ and $\alpha\in \Pmc$, the \emph{subdegrees} of $G$ are the size of orbits of the action of the point-stabiliser $G_\alpha$ on $\Pmc$.

\begin{lemma}\label{lem:subdeg}{\rm \cite[3.9]{a:LSS1987}}
	If $X$ is a group of Lie type in characteristic $p$, acting on the set of cosets of a maximal parabolic subgroup, and $X$ is neither $\PSL_n(q)$, $\POm_{n}^{+}(q)$
	(with $n/2$ odd), nor $\E_{6}(q)$, then there is a unique subdegree which is a power of $p$.
\end{lemma}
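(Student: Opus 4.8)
The plan is to reduce the statement to a computation in the Weyl group $W$ of $X$ via the Bruhat decomposition. Fix a split $BN$-pair for $X$ with Borel subgroup $B$, Weyl group $W$ and fundamental reflections $S$, and write the maximal parabolic as $P=P_J$ with $J=S\setminus\{s\}$ for a single $s\in S$. The orbits of $P$ on the cosets $X/P$ are indexed by the double cosets $W_J\backslash W/W_J$, each having a unique representative $d$ of minimal length. Writing $\pi_K=\sum_{w\in W_K}q^{\ell(w)}$ for the Poincar\'e polynomial of a standard parabolic $W_K$ and setting $J_d=J\cap d^{-1}(J)$, so that $W_{J_d}=W_J\cap d^{-1}W_Jd$, the corresponding subdegree is
\[
\lambda_d=\frac{|P_JdP_J|}{|P_J|}=q^{\ell(d)}\,\frac{\pi_J}{\pi_{J_d}},
\]
the standard length formula for suborbits of a parabolic action. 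The trivial coset $d=1$ gives the fixed point, $\lambda_1=1$.

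The first key step is an elementary congruence. Since $q\equiv 0\pmod p$, only the identity term of $\pi_K$ survives, so $\pi_K\equiv 1\pmod p$ for every $K$; hence $\pi_J/\pi_{J_d}$ is a positive integer coprime to $p$. Therefore $\lambda_d$ is a power of $p$ if and only if $\pi_J=\pi_{J_d}$, i.e. $W_{J_d}=W_J$, i.e. $d\in N_W(W_J)$; in that case $W_JdW_J=dW_J$ is a single coset with $\lambda_d=q^{\ell(d)}$. Consequently the subdegrees that are powers of $p$ are exactly those indexed by the minimal-length representatives of the cosets of $W_J$ in $N_W(W_J)$, and their number equals $|N_W(W_J):W_J|$. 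The lemma thus becomes the purely Weyl-theoretic assertion that, for $X$ outside the excluded list, $|N_W(W_J):W_J|=2$ for every maximal $J$ (one trivial and one nontrivial power-of-$p$ subdegree).

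For existence I would use $d_J:=w_0w_{0,J}$, where $w_0$ and $w_{0,J}$ are the longest elements of $W$ and $W_J$. It is the longest distinguished coset representative, has length $\ell(d_J)=\ell(w_0)-\ell(w_{0,J})=\dim U_P$, and satisfies $d_JW_Jd_J^{-1}=w_0W_Jw_0^{-1}=W_{\tau(J)}$, where $\tau=-w_0$ is the opposition involution on the diagram. Hence $d_J\in N_W(W_J)$ exactly when $\tau(J)=J$, and in that case $d_J$ is an involution modulo $W_J$ yielding the nontrivial power-of-$p$ subdegree $\lambda_{d_J}=q^{\dim U_P}=|U_P|$, geometrically the orbit of parabolics opposite to $P$. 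When $\tau=1$ this occurs for every maximal $J$; the types with $\tau\neq 1$ are precisely $A_{n-1}$ (for $n\geq 3$), $D_m$ with $m$ odd, and $\E_6$, that is $\PSL_n(q)$, $\POm_{2m}^{+}(q)$ with $n/2$ odd, and $\E_6(q)$. For these there are maximal $J$ with $\tau(J)\neq J$, where the opposite parabolics lie in another conjugacy class and the subdegree $|U_P|$ disappears; this is exactly the excluded list. (For the point action of $\PSL_n(q)$ the failure is transparent: the action is $2$-transitive with subdegrees $1$ and $q(q^{n-1}-1)/(q-1)$, the latter not a power of $p$.)

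The remaining and main task is uniqueness: to show $|N_W(W_J):W_J|=2$ for every maximal $J$ in the non-excluded types, so that $d_J$ accounts for the only nontrivial power-of-$p$ subdegree. Here I would invoke the description of normalizers of parabolic subgroups (Howlett), by which $N_W(W_J)/W_J$ is controlled by those automorphisms of the one-node-deleted subdiagram $\Phi_J$ that are actually induced by elements of $W$, and then verify type by type that this quotient is cyclic of order $2$. The delicate cases are type $D$ and the large exceptional groups: in type $D$ the constraint that $W(D_m)$ consists only of even signed permutations is exactly what prevents the individual diagram symmetries of $\Phi_J$ from being realized separately, forcing them to combine into the single involution $d_J$ (for instance, for $D_4$ with the central node deleted the triality $\S_3$ on the three $A_1$-components is the outer automorphism and is not realized inside $W$, leaving $\mathbb{Z}_2$). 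I expect this case-by-case separation of genuinely inner diagram symmetries from outer ones --- together with the corresponding geometric statement that opposite maximal totally isotropic subspaces lie in the same family precisely when $m$ is even --- to be the principal obstacle, to be handled by explicit root-system computation in the exceptional groups.
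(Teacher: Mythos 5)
The paper offers no proof of this lemma; it is quoted verbatim from Liebeck--Saxl--Seitz \cite[3.9]{a:LSS1987}, so your argument can only be measured against that source. Your reduction is the standard and correct one as far as it goes: the $P_J$-orbits on $X/P_J$ are the Bruhat double cosets, the suborbit length $q^{\ell(d)}\pi_J/\pi_{J_d}$ has $p'$-part $\pi_J/\pi_{J_d}\equiv 1\pmod p$, so the $p$-power suborbits are exactly those with $W_{J_d}=W_J$ and are counted by $|N_W(W_J):W_J|$; and the existence half, via $d_J=w_0w_{0,J}$ and the opposition involution $\tau$, correctly produces the orbit of opposite parabolics of length $|U_P|$ and correctly recovers $A_{n-1}$, $D_m$ ($m$ odd) and $\E_6$ as the types where this can fail, matching the excluded list.

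The statement to be proved, however, is the \emph{uniqueness}, and that is precisely the step you defer. In your translation the lemma says $|N_W(W_J):W_J|=2$ for every maximal $J$ in types $B_n$, $C_n$, $D_{2m}$, $\E_7$, $\E_8$, $F_4$, $G_2$ (and in the relative Weyl groups of the twisted forms); you write that you ``would invoke'' Howlett and ``verify type by type'', and you yourself flag this as the principal obstacle, but you do not carry it out. This is not a formality: Howlett's theorem only bounds $N_W(W_J)/W_J$ by the diagram symmetries of $\Phi_J$ that are induced by elements of $W$, and for a maximal $J$ such as $\Phi_J=A_1\times A_2\times A_4$ in $\E_8$ or $A_1\times A_2\times A_3$ in $\E_7$ the abstract symmetry group is $\Zbb_2\times\Zbb_2$, so one must actually check that only the single involution coming from $w_0w_{0,J}$ is realized in $W$ --- if a second symmetry were realized, there would be several nontrivial $p$-power subdegrees and the lemma would be false. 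A second, smaller gap: the lemma (and the paper's use of it, e.g.\ for $\POm^{-}_{2m}(q)$) covers twisted groups, for which $W$ must be the relative Weyl group and $q^{\ell(w)}$ replaced by the product of the parameters $q_s=|BsB|/|B|$; the congruence $\pi_K\equiv 1\pmod p$ survives since each $q_s$ is a power of $p$, but your write-up is phrased only for the split case. In short: correct strategy and a complete existence argument, but the uniqueness --- the actual content of the lemma --- is left as an unexecuted case analysis.
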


\begin{remark}\label{rem:subdeg}
	We remark that even in the cases excluded in Lemma~\ref{lem:subdeg}, many of the maximal parabolic subgroups still have the property as asserted, see proof of \cite[Lemma 2.6]{a:Saxl2002}.
\end{remark}

\begin{lemma}{\rm \cite{a:Liebeck-Odd}}\label{lem:odd-deg}
	If $X$ is a simple group of Lie type in odd characteristic, and $X$ is neither $\PSL_n(q)$, nor $E_6(q)$, then the index of any parabolic subgroup is even.
\end{lemma}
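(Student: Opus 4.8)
The plan is to reduce the parity of parabolic indices in $X$ to a purely combinatorial statement about the Weyl group, and then to settle that statement type by type.

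First I would pass to the Bruhat decomposition. Realising $X$ through its split $BN$-pair, with relative Weyl group $W$ and Borel subgroup $B$, every double coset satisfies $|BwB|/|B| = q^{c(w)}$ for some nonnegative integer $c(w)$, where $q$ is a power of the odd defining characteristic; in particular each cell size, including the identity cell of size $1$, is odd. Summing over the minimal-length coset representatives $W^{J}$ of a standard parabolic $P_{J}$ gives $|X : P_{J}| = \sum_{w \in W^{J}} q^{c(w)}$, so reducing modulo $2$ yields
\[
|X : P_{J}| \equiv |W^{J}| = |W : W_{J}| \pmod 2 .
\]
Thus a parabolic has even index exactly when $W_{J}$ fails to contain a Sylow $2$-subgroup of $W$, i.e.\ when $v_{2}(|W_{J}|) < v_{2}(|W|)$ for the $2$-adic valuation $v_{2}$. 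For twisted groups one runs the identical argument with the relative $BN$-pair: here $W$ is the relative Weyl group, of type $B_{m}$ for ${}^{2}A_{n}$ (that is $\PSU_{n+1}(q)$) and ${}^{2}D_{n}$ (that is $\POm_{2n}^{-}(q)$), of type $F_{4}$ for ${}^{2}E_{6}$, of type $G_{2}$ for ${}^{3}D_{4}$, and of rank one for ${}^{2}G_{2}$; the relative root subgroups still have $q$-power order, so the cell sizes stay odd and the reduction goes through unchanged.

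Next I would reduce to maximal parabolics, since every proper parabolic $P_{J'}$ lies in a maximal one $P_{J}$ with $|X:P_{J'}| = |X:P_{J}|\cdot|P_{J}:P_{J'}|$, so it suffices to prove $|W:W_{J}|$ is even for each maximal $J = S\setminus\{s\}$. For the classical types this is a digit-sum computation. Writing $s_{2}(m)$ for the sum of the binary digits of $m$, one has $v_{2}(m!) = m - s_{2}(m)$; in type $B_{\ell}=C_{\ell}$ (covering $\PSp_{2\ell}(q)$, $\POm_{2\ell+1}(q)$ and the relative systems of the unitary and ${}^{2}D_{n}$ groups) a maximal parabolic is $W_{J}\cong \S_{i}\times W(B_{\ell-i})$, and a short calculation gives
\[
v_{2}(|W|) - v_{2}(|W_{J}|) = i + s_{2}(i) + s_{2}(\ell-i) - s_{2}(\ell).
\]
Since $s_{2}(i)+s_{2}(\ell-i)\geq s_{2}(\ell)$ by subadditivity of the digit sum and $i\geq 1$, this is strictly positive, so every such index is even; the fork- and branch-node cases of type $D_{\ell}$ ($\ell\geq 4$) reduce to the same inequality together with an estimate for the two short legs. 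The role of the excluded type $A_{n-1}$, i.e.\ $\PSL_{n}(q)$, is visible here: there $W_{J}\cong \S_{i}\times \S_{n-i}$ and $|W:W_{J}| = \binom{n}{i}$, which is odd whenever the binary digits of $i$ are dominated by those of $n$, so odd parabolic indices genuinely occur.

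Finally I would dispose of the exceptional types $G_{2}, F_{4}, E_{7}, E_{8}$ by a finite check, computing $v_{2}(|W|)$ and verifying $v_{2}(|W_{J}|) < v_{2}(|W|)$ for each maximal $J$, equivalently that no maximal parabolic of $W$ contains a Sylow $2$-subgroup. The excluded case $E_{6}$ is precisely where this breaks: $|W(E_{6})| = 2^{7}\cdot 3^{4}\cdot 5$ while its maximal parabolic of type $D_{5}$ has $|W(D_{5})| = 2^{7}\cdot 15$ with the full $2$-part $2^{7}$, so $|W(E_{6}):W(D_{5})| = 27$ is odd, which both explains the $E_{6}$ exception and shows no cleaner statement holds. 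I expect the main obstacle to be the twisted groups: one must justify that the relative $BN$-pair really has all Bruhat cells of $q$-power order so that the mod-$2$ reduction is legitimate, and one must pin down the correct relative Weyl group in each case, noting in particular that ${}^{2}E_{6}$ has relative type $F_{4}$, which is exactly why it is not excluded even though $E_{6}$ is.
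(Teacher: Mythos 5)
Your argument is correct: the paper states this lemma as a quotation from Liebeck--Saxl \cite{a:Liebeck-Odd} without proof, and your reduction $|X:P_J|\equiv|W:W_J|\pmod 2$ via the Bruhat decomposition, followed by the $2$-adic check of $|W:W_J|$ type by type (with the binomial identity $|W(B_\ell):W_J|=2^i\binom{\ell}{i}$ in the classical cases and the $D_5\subset E_6$ index $27$ explaining the exception), is exactly the standard argument underlying that reference. The only point needing care, which you correctly flag, is that for the twisted groups the relative Bruhat cells have $p$-power (hence odd) order so the mod-$2$ reduction to the relative Weyl group remains valid.
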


\begin{lemma}{\rm \cite[Propositions 1 and 2]{a:LPS2} and \cite[Sections 3-7]{a:Saxl2002} } \label{lem:subdeg-PSp}
	Let $G$ be an almost simple group with socle $X$  being a finite classical simple group of Lie type with dimension at least three. Suppose that $H$ is a maximal subgroup of $G$ not containing $X$. Then the action of $G$ on the cosets of $H$ has subdegrees dividing the numbers $d$ listed in the fourth column of {\rm Table~\ref{tbl:subdeg-PSp}}.
\end{lemma}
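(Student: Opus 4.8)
My plan is to treat this statement as a compilation of subdegree data for the primitive actions of almost simple classical groups and to organise the argument according to the Aschbacher class of the maximal subgroup $H$, reducing each case to one of the two cited sources. The fundamental observation is that the subdegrees of $G$ acting on the coset space $G/H$ are precisely the lengths of the $H$-orbits on $G/H$; since $G$ acts on the natural geometry associated with $X$, these are the lengths of the orbits of $H$ on the relevant set of subspaces of the natural module. Thus the whole problem reduces to understanding the $H$-orbit structure on this geometry and extracting a single divisor $d$ that bounds every nontrivial orbit length. I would first split into the parabolic and non-parabolic cases, since these are controlled by quite different techniques.

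First I would dispose of the case in which $H$ is a maximal parabolic subgroup. Here the action of $G$ on $G/H$ is the action on an orbit of (totally singular or nonsingular, according to the form) subspaces of the natural module, and the $H$-orbits are governed by the Bruhat decomposition: the double cosets $H\backslash G/H$ are indexed by the $(W_J,W_J)$-double cosets in the Weyl group $W$, and each contributes a subdegree expressible through Gaussian binomial coefficients in $q$. Lemma~\ref{lem:subdeg} and Remark~\ref{rem:subdeg} already supply a $p$-power subdegree in almost all cases, and the detailed orbit-length computations carried out in \cite[Sections~3--7]{a:Saxl2002} give exactly the divisors $d$ to be tabulated. The families $\PSL_n(q)$, $\POm^{+}_{2m}(q)$ with $m$ odd, and $\E_6(q)$ excluded from Lemma~\ref{lem:subdeg} need separate attention, but, as the remark indicates, most of their parabolic actions still behave well and the residual subdegrees can be read off geometrically.

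For the non-parabolic maximal subgroups -- those in the classes $\Cmc_2$ through $\Cmc_8$ together with the almost simple irreducible class $\Smc$ -- I would invoke \cite[Propositions~1 and~2]{a:LPS2}, which bound the suborbit lengths for the primitive actions of classical groups on cosets of such subgroups. The strategy there is to exploit the specific geometric nature of each stabiliser (of a direct-sum or tensor decomposition, of a subfield or extension-field structure, of a nondegenerate or totally singular subspace, and so on): one identifies a small $H$-invariant configuration whose stabiliser in $G$ has controllable index, thereby pinning down a divisor of the nontrivial subdegrees. Combining these bounds with the elementary fact that every subdegree divides $|H|$ yields the tabulated values of $d$ in Table~\ref{tbl:subdeg-PSp}.

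The main obstacle is the sheer breadth and nonuniformity of the case analysis rather than any single hard idea: there are eight geometric classes, each subdividing according to the type of form and the parameters $n$ and $q$, and one must ensure that the divisor $d$ recorded in Table~\ref{tbl:subdeg-PSp} is simultaneously valid across all these configurations and sharp enough to be useful later in the proof. The genuinely delicate points are the exceptional parabolic actions flagged in Lemma~\ref{lem:subdeg}, where the clean ``unique $p$-power subdegree'' can fail and one must argue geometrically case by case, and the borderline small-dimensional or small-$q$ groups, where the coincidences among the classical families listed in Section~\ref{sec:defn} can produce extra suborbits that have to be checked directly or, as the paper notes, with \textsf{GAP}.
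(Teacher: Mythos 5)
Your proposal is correct and matches the paper's treatment: the paper gives no independent proof of Lemma~\ref{lem:subdeg-PSp}, but simply compiles Table~\ref{tbl:subdeg-PSp} from \cite[Propositions 1 and 2]{a:LPS2} and \cite[Sections 3--7]{a:Saxl2002}, exactly the two sources and the parabolic/non-parabolic division you describe. Your sketch of the underlying methods (Bruhat decomposition for parabolics, geometric configurations for the other Aschbacher classes) is a fair account of what those references do, so nothing further is needed.
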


\begin{table}
	\caption{Some subdegrees of finite simple classical groups $X$ on its right coset action of $H$.}\label{tbl:subdeg-PSp}
	\scriptsize
	\resizebox{\textwidth}{!}{
		\begin{tabular}{cllll}
			\noalign{\smallskip}\hline\noalign{\smallskip}
			Class & $X$ & Type of $H$ & $d$  &  Conditions  \\
			\noalign{\smallskip}\hline\noalign{\smallskip}
			$\Cmc_1$ &
			$\PSL_{n}(q)$ &
			$P_i$ &
			$\dfrac{q(q^i-1)(q^{n-i}-1)}{(q-1)^2}$ &
			$1<i\leq n/2$, $n>2$
			\\
			$\Cmc_1$ &
			$\PSL_{n}(q)$ &
			$P_i$ &
			$\dfrac{q^9(q^4-1)}{(q-1)}$ &
			$i=3$, $n=7$
			\\
			$\Cmc_1$ &
			$\PSL_{n}(q)$ &
			$P_i$ &
			$\dfrac{q(q+1)(q^{n-2}-1)}{(q-1)}$ &
			$i=2$, $n\geq 4$
			\\
			&
			&
			$P_i$&
			$\dfrac{q^4(q^{n-2}-1)(q^{n-3}-1)}{(q^2-1)(q-1)}$ &
			$i=2$, $n\geq 4$
			\\
			$\Cmc_1$ &
			$\PSL_{n}(q)$ &
			$\GL_i(q){\bigoplus}\GL_{n-i}(q)$ &
			$\dfrac{q^{n-2}(q^{n-1}-1)}{(q-1)}$ &
			$i=1$, $n>2$
			\\
			$\Cmc_2$ &
			$\PSL_{n}(q)$ &
			$\GL_{m}(q)\wr\S_t$ &
			$2n(n-1)(q-1)$ &
			$n=mt>2$, $m=1$
			\\
			$\Cmc_2$ &
			$\PSL_{n}(q)$ &
			$\GL_{m}(q)\wr\S_t$ &
			$\dfrac{t(t-1)(q^m-1)^2}{(q-1)}$ &
			$n=mt>2$, $m>1$
			\\
			$\Cmc_3$ &
			$\PSL_{n}(q)$ &
			$\GL_{m}(q^t)$ &
			$(q^n-1)(q^{n-2}-1)$ &
			$t=2$, $n=2m\geq8$
			\\
			$\Cmc_5$ &
			$\PSL_{n}(q)$ &
			$\GL_{n}(q_0)$ &
			$(q_0^n-1)(q_0^{n-1}-1)$ &
			$q=q_0^2$, $n\geq5$
			\\
			$\Cmc_8$ &
			$\PSL_{n}(q)$ &
			$\Sp_n(q)$ &
			$(q^n-1)(q^{n-2}-1)$ &
			$n$ even, $n\geq 8$
			\\
			$\Cmc_8$ &
			$\PSL_{n}(q)$ &
			$\GU_n(q_0)$ &
			$(q_0^n-(-1)^n)$ &
			$q=q_0^2$, $n\geq4$
			\\
			&
			&
			&
			${\cdot}(q_0^{n-1}-(-1)^{n-1})$ & \\
			
			$\Cmc_2$ &
			$\PSp_{2m}(q)$ &
			$\Sp_{2i}(q) \wr \S_t$ &
			$\dfrac{t(t-1)(q^{2i}-1)^2}{2(q-1)}$ &
			$m=it$, $m\geq2$
			\\
			$\Cmc_2$ &
			$\PSp_4(q)$ &
			$\Sp_{2}(q) \wr \S_2$ &
			$(q^2-1)(q+1)$ &
			$q$ odd
			\\
			&
			&
			$\Sp_{2}(q) \wr \S_2$ &
			$q(q^2-1)/2$ &
			$q$ odd
			\\
			&
			&
			$\Sp_{2}(q) \wr \S_2$ &
			$q(q^2-1)(q-3)/2$ &
			$q$ odd
			\\
			&
			&
			$\Sp_{2}(q) \wr \S_2$ &
			$(q^2-1)(q+1)$ &
			$q$ even
			\\
			&
			&
			$\Sp_{2}(q) \wr \S_2$ &
			$ q(q^2-1)(q-2)/2$ &
			$q$ even
			\\
			$\Cmc_3$ &
			$\PSp_{2m}(q)$ &
			$\Sp_{2i}(q^t)$ &
			$a(q^{4i}-1)$ &
			$m=it\geq2$, $t=2$
			\\
			$\Cmc_3$ &
			$\PSp_4(q)$ &
			$\Sp_2(q^2)$ &
			$(q^2+1)(q-1)$&
			$q$ odd
			\\
			&
			&
			$\Sp_2(q^2)$ &
			$q(q^2+1)/2$&
			$q$ odd
			\\
			&
			&
			$\Sp_2(q^2)$ &
			$q(q^2+1)(q-3)/2$ &
			$q$ odd
			\\
			&
			&
			$\Sp_2(q^2)$ &
			$(q^2+1)(q-1)$ &
			$q$ even
			\\
			&
			&
			$\Sp_2(q^2)$ &
			$q(q^2+1)(q-2)/2$ &
			$q$ even
			\\
			$\Cmc_8$ &
			$\PSp_{2m}(q)$ &
			$\GO_{2m}^{\epsilon}(q)$ &
			$(q^m - \epsilon)(q^{m-1}+ \epsilon)$ &
			$q$ even, $m\geqslant 3$, $\e=\pm$
			\\
			&
			&
			$\GO_{2m}^{\epsilon}(q)$ &
			$(q^{m-1})(q^m - \epsilon)(q-2)/2$ &
			$q$ even, $m\geqslant 3$, $\e=\pm$
			\\
			$\Cmc_1$ &
			$\POm_{2m}^{+}(q)$ &
			$P_m$ &
			$\dfrac{q(q^2+1)(q^5-1)}{(q-1)}$ &
			$m=5$
			\\
			&
			&
			$P_m$ &
			$\dfrac{q^6(q^5-1)}{(q-1)}$ &
			$m=5$ \\
			$\Cmc_1$ &
			$\POm_{2m+1}(q)$ &
			$N_1^\delta$ &
			$(q^m - \delta)(q^{m} + \delta)$ &
			$m\geq3$, $\delta=\pm$, $q$ odd
			\\
			&
			&
			$N_1^\delta$ &
			$q^{m-1}(q^{m} - \delta)/2$ &
			$m\geq3$, $\delta=\pm$, $q$ odd
			\\
			&
			&
			$N_1^\delta$ &
			$q^{m-1}(q^m - \delta)(q-3)/2$ &
			$m\geq3$, $\delta=\pm$, $q$ odd
			\\
			$\Cmc_1$ &
			$\POm_{2m}^{\e}(q)$ &
			$N_1$ &
			$q^{2m-2}-1$ &
			$m\geq4$, $q \equiv 1 \pmod 4$
			\\
			&
			&
			$N_1$ &
			$q^{m-1}(q^{m-1}+\e)/2$ &
			$m\geq4$, $q \equiv 1 \pmod 4$
			\\
			&
			&
			$N_1$ &
			${q^{m-1}(q^{m-1}-\e)(q-1)/4}$ &
			$m\geq4$, $q \equiv 1 \pmod 4$
			\\
			&
			&
			$N_1$ &
			${q^{m-1}(q^{m-1}+\e)(q-5)/4}$ &
			$m\geq4$, $q \equiv 1 \pmod 4$
			\\
			&
			&
			$N_1$ &
			$q^{2m-2}-1$ &
			$m\geq4$, $q \equiv 3 \pmod 4$
			\\
			&
			&
			$N_1$ &
			$q^{m-1}(q^{m-1}-\e)/2$ &
			$m\geq4$, $q \equiv 3 \pmod 4$
			\\
			&
			&
			$N_1$ &
			${q^{m-1}(q^{m-1}-\e)(q-3)/4}$ &
			$m\geq4$, $q \equiv 3 \pmod 4$
			\\
			&
			&
			$N_1$ &
			${q^{m-1}(q^{m-1}+\e)(q-3)/4}$ &
			$m\geq4$, $q \equiv 3 \pmod 4$
			\\
			&
			&
			$N_1$ &
			$q^{2m-2}-1$ &
			$m\geq4$, $q$ even
			\\
			&
			&
			$N_1$ &
			$q^{m}(q^{m-1}-\e)/2$ &
			$m\geq4$, $q$ even
			\\
			&
			&
			$N_1$ &
			${q^{m-1}(q^{m-1}+\e)(q-2)/2}$ &
			$m\geq4$, $q$ even
			\\
			$\Cmc_1$ &
			$\POm_{2m}^{\e}(q)$ &
			$N_2^\delta$ &
			$a_{2'}(q-\delta)(q^{m-1}-\e \delta)$ &
			$m\geq4$, $q$ even
			\\
			$\Cmc_1$ &
			$\POm_{2m}^{\e}(q)$ &
			$N_i^\delta$ &
			$a_{2'}(q^{i/2}-\delta)(q^{(i-2)/2}+\delta)$ &
			$m\geq4$, $q$ even,
			\\
			&
			&
			&
			${\cdot}(q^{(n-i)/2}+\delta')$ &
			$2<i\leq m$, $i$ even,
			\\
			&
			&
			&
			${\cdot}(q^{(n-i-2)/2}+\delta')$ &
			$\delta'=-\e \delta$,
			\\
			$\Cmc_2$ &
			$\POm_{2m}^{+}(q)$ &
			$\GL_m(q){\cdot}2$ &
			$a(q^m-1)(q^{m-1}-1)$ &
			$m \geq5$
			\\
			$\Cmc_3$ &
			$\POm_{2m}^{\e}(q)$ &
			$\GU_{m}(q)$ &
			$a(q^m-(-1)^m)(q^{m-1}-(-1)^{m-1})$ &
			$m\geq5$, $\e=(-1)^m$,
			\\
			&
			&
			&
			${\cdot}(q^{m-1}-(-1)^{m-1})$ &
			$q$ even
			\\
			$\Cmc_{5}$ &
			$\POm^{+}_{2m}(q)$ &
			$\GO_{2m}^{\e'}(q_0)$  &
			$ac\gcd(4,q^{m}-1)\cdot (q_0^{m}-\e')$&
			$m\geq4$, $\e'=\pm$, $q=q_{0}^2$,
			\\
			&
			&
			&
			${\cdot}(q_0^{m-1}+\e')$&
			$(m,c)=(4,6)$ or $c=2$
			\\
			\noalign{\smallskip}\hline\noalign{\smallskip}
		\end{tabular}
	}
\end{table}

\begin{lemma}\label{lem:six} {\rm \cite[Lemmas 5 and 6]{a:Zhou-CP-nonsym-alt}}
	Let $\Dmc$ be a $2$-design with $r$ is coprime to $\lambda$, and let $G$ be a flag-transitive automorphism group of $\Dmc$. If $\alpha$ is a point in $\Pmc$ and $H=G_{\alpha}$, then
	\begin{enumerate}[\rm \quad (a)]
		\item $r(k-1)=\lambda(v-1)$. In particular, if $\gcd(r,\lambda)=1$, then $r$ divides $v-1$ and $\gcd(r,v)=1$;
		\item $vr=bk$;
		\item $r$ divides $|H|$, and $\lambda v<r^2$;
		\item $r$ divides all nontrivial subdegrees $d$ of $G$.
	\end{enumerate}
\end{lemma}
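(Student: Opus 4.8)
The plan is to establish the four assertions by elementary double-counting together with the orbit-counting consequences of flag-transitivity; all four are classical, so the real task is to organise the counts cleanly and to isolate exactly where the hypothesis $\gcd(r,\lambda)=1$ is used. I would prove them in the order (b), (a), (c), (d), since each later part reuses the earlier identities.

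First I would dispatch the purely combinatorial identities (b) and (a). For (b), counting the flags $(\beta,B)$ in two ways---$v$ points each lying in $r$ blocks versus $b$ blocks each containing $k$ points---gives $vr=bk$. For (a), fix the point $\alpha$ and count the pairs $(\beta,B)$ with $\beta\neq\alpha$ and $\alpha,\beta\in B$: summing over the $r$ blocks through $\alpha$, each contributing its remaining $k-1$ points, yields $r(k-1)$, while summing over the $v-1$ points $\beta\neq\alpha$, each pair $\{\alpha,\beta\}$ lying in exactly $\lambda$ blocks, yields $\lambda(v-1)$; hence $r(k-1)=\lambda(v-1)$. When $\gcd(r,\lambda)=1$, from $r\mid\lambda(v-1)$ I conclude $r\mid v-1$, and writing $v-1=rt$ gives $\gcd(r,v)=\gcd(r,rt+1)=1$.

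Next I would invoke flag-transitivity for (c). Because $G$ is transitive on flags and on points, the stabiliser $H=G_{\alpha}$ acts transitively on the set $\Gamma_{\alpha}$ of the $r$ blocks through $\alpha$; orbit--stabiliser then gives $r=|\Gamma_{\alpha}|\mid |H|$. For the inequality $\lambda v<r^{2}$ I would combine (a) with two standard facts: $r>\lambda$ (since $(v-1)/(k-1)>1$ as $k<v$) and $k\leq r$ (Fisher's inequality $b\geq v$ fed into $vr=bk$). Then $\lambda v=\lambda(v-1)+\lambda=r(k-1)+\lambda<r(k-1)+r=rk\leq r^{2}$.

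Finally, for (d) I would run the incidence count that genuinely exploits both flag-transitivity and coprimality, and this is the one step I expect to require care. Let $\Delta$ be a nontrivial subdegree, that is, an $H$-orbit on $\Pmc\setminus\{\alpha\}$, and consider incidence between $\Delta$ and $\Gamma_{\alpha}$. Each $\beta\in\Delta$ lies in exactly $\lambda$ blocks of $\Gamma_{\alpha}$ (the blocks on the pair $\{\alpha,\beta\}$), so there are $|\Delta|\lambda$ incident pairs; on the other hand, since $H$ is transitive on $\Gamma_{\alpha}$ and fixes $\Delta$ setwise, every block of $\Gamma_{\alpha}$ meets $\Delta$ in the same number $x$ of points, giving $rx$ incident pairs. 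Hence $r\mid|\Delta|\lambda$, and $\gcd(r,\lambda)=1$ forces $r\mid|\Delta|$. The point to get right is the constancy of $x=|\Delta\cap B|$ over $B\in\Gamma_{\alpha}$, which holds because any $g\in H$ sending $B$ to $B'$ restricts to a bijection $\Delta\cap B\to\Delta\cap B'$; this is precisely where transitivity of $H$ on $\Gamma_{\alpha}$---a consequence of flag-transitivity and not of mere point-transitivity---is indispensable.
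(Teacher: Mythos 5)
Your proof is correct and is essentially the standard argument: the paper does not reprove this lemma but cites it from the literature, and the proofs given there are exactly your double counts for (a), (b), the orbit--stabiliser plus Fisher argument for (c), and the incidence count between an $H$-orbit and the $r$ blocks through $\alpha$ for (d). All steps, including the key constancy of $|\Delta\cap B|$ over $B\in\Gamma_{\alpha}$ via transitivity of $H$ on $\Gamma_{\alpha}$, are sound.
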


\begin{lemma}\label{lem:embed} {\rm \cite[2.2.5]{b:Dembowski} and \cite[Theorem II 6.27]{b:Handbook}}
	Let $\Dmc$ be a $2$-design with parameters $(v,k,\lambda)$ design with $\lambda\leq 2$. If $\Dmc$ satisfies $r = k + \lambda$, then $\Dmc$ is
	embeddable in a symmetric $(v + k + \lambda,k + \lambda,\lambda)$ design.
\end{lemma}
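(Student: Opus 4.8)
The plan is to recognise $\Dmc$ as a \emph{quasi-residual} design and to realise it as the residual of the desired symmetric design. First I would verify the parameter bookkeeping that motivates the statement: if $\Dmc'$ is a symmetric $2$-$(v+k+\lambda,\,k+\lambda,\,\lambda)$ design and $\bar{B}$ is one of its blocks, then deleting $\bar{B}$ together with its $k+\lambda$ points leaves $v$ points, every other block of $\Dmc'$ meets $\bar{B}$ in exactly $\lambda$ points and so contributes a block of size $(k+\lambda)-\lambda=k$ with the same $\lambda$, and a point outside $\bar{B}$ lies on all $k+\lambda$ of the surviving blocks through it (since $\bar{B}$ is not among them). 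Thus the residual of $\Dmc'$ has parameters $(v,k,\lambda)$ with $r=k+\lambda$, exactly those of $\Dmc$. The task is therefore to \emph{reverse} this construction: to adjoin one new block $\bar{B}$ of size $k+\lambda$ and $\lambda$ new points, and to extend each existing block so that every pair of blocks of the completed structure meets in exactly $\lambda$ points.

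I would then split into the two permitted values of $\lambda$. For $\lambda=1$, Lemma~\ref{lem:six}(a) gives $r(k-1)=v-1$, whence $(k+1)(k-1)=v-1$ and $v=k^2$. Hence $\Dmc$ is a $2$-$(k^2,k,1)$ design, i.e. an affine plane of order $k$: any two points lie on a unique line, each line has $k$ points, each point lies on $k+1$ lines, and two lines are either disjoint or meet in a single point, so disjointness partitions the lines into $k+1$ parallel classes. The extension is the classical one: introduce $k+1$ new points, one per parallel class, place them on a single new line $\bar{B}$ (the line at infinity), and append to each old line the point of its parallel class. This produces a $2$-$(k^2+k+1,\,k+1,\,1)$ projective plane, a symmetric design with $k^2+k+1=v+k+\lambda$, in which $\Dmc$ is visibly the residual.

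The substantive case is $\lambda=2$, which is the Hall--Connor theorem and is the main obstacle; here embeddability is not a formality but a genuine theorem. The idea is to reconstruct the deleted block $\bar{B}$, a $(k+2)$-set, together with its two off-design points, by analysing the intersection numbers of the blocks of $\Dmc$. Because $\lambda=2$, any two blocks of a symmetric completion meet in exactly two points, so two blocks of $\Dmc$ can meet only in a tightly constrained set of sizes; double counting of these intersection numbers forces the blocks of $\Dmc$ to be labelled consistently by unordered pairs drawn from a $(k+2)$-element index set, and this labelling is precisely the prescription for incidence with $\bar{B}$. Executing the counting and then checking the $2$-design axioms of the completed incidence structure is the technical heart of the argument, and it is exactly where the hypothesis $\lambda\leq 2$ is indispensable: for $\lambda\geq 3$ non-embeddable quasi-residual designs exist, so no such clean completion can be guaranteed. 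Since the result is classical, in practice I would cite \cite[2.2.5]{b:Dembowski} and \cite[Theorem~II.6.27]{b:Handbook} rather than reproduce the Hall--Connor analysis, but the three steps above are the structure of a self-contained proof.
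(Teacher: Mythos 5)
The paper offers no proof of this lemma; it is quoted verbatim from Dembowski [2.2.5] and the Handbook [Theorem II 6.27], which is exactly where your argument also lands. Your outline of what those sources contain is accurate -- the parameter check showing $\Dmc$ is quasi-residual, the completion of the affine plane $2$-$(k^2,k,1)$ to a projective plane when $\lambda=1$, and the Hall--Connor theorem as the genuine content when $\lambda=2$ -- so your proposal is correct and takes essentially the same route as the paper, namely deferring the hard case to the cited classical results.
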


For a given positive integer $n$ and a prime divisor $p$ of $n$, we denote the $p$-part of $n$ by $n_{p}$, that is to say, $n_{p}=p^{t}$ with $p^{t}\mid n$ but $p^{t+1}\nmid n$.

\begin{corollary}{\rm \cite[Corollary 2.1]{a:ABCD-PrimeRep}}\label{cor:large}
	Let $\Dmc$ be a flag-transitive $2$-design with automorphism group $G$. Then $|G|\leq |G_{\alpha}|^3$, where $\alpha$ is a point in $\Dmc$.
\end{corollary}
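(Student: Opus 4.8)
The plan is to reduce everything to two facts: an orbit-counting identity coming from flag-transitivity, and a purely combinatorial inequality among the design parameters. Since a flag-transitive group is in particular point-transitive, $G$ acts transitively on $\Pmc$, and the orbit-stabiliser theorem gives $v=|G:G_{\alpha}|$, whence $|G|=v\,|G_{\alpha}|$. Consequently the assertion $|G|\leq |G_{\alpha}|^{3}$ is equivalent to $v\leq |G_{\alpha}|^{2}$, and it is this inequality that I would establish.

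First I would bound the replication number by the order of the point-stabiliser. Because $G$ is flag-transitive, the set of flags $(\alpha,B)$ with fixed first coordinate $\alpha$ forms a single orbit under $H=G_{\alpha}$; equivalently, $H$ acts transitively on the $r$ blocks incident with $\alpha$. Fixing one such block $B$, the orbit-stabiliser theorem yields $r=|H:H_{B}|$, so $r$ divides $|H|$ and in particular $r\leq |G_{\alpha}|$. This is exactly the first part of Lemma~\ref{lem:six}(c), and its proof does not use the coprimality hypothesis imposed in that lemma.

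Next I would prove the combinatorial bound $v<r^{2}$. Starting from the standard identity $\lambda(v-1)=r(k-1)$ of Lemma~\ref{lem:six}(a), I write $\lambda v=r(k-1)+\lambda$. Fisher's inequality gives $b\geq v$, equivalently $r\geq k$, so $k-1\leq r-1$ and hence $r(k-1)\leq r^{2}-r$. Moreover $r=\lambda(v-1)/(k-1)>\lambda$ since $v>k$ for a nontrivial design, so $\lambda\leq r-1$. Combining these, $\lambda v\leq(r^{2}-r)+(r-1)=r^{2}-1<r^{2}$, and since $\lambda\geq 1$ we obtain $v\leq\lambda v<r^{2}$.

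Finally I would combine the two steps: from $v<r^{2}\leq|G_{\alpha}|^{2}$ and $|G|=v\,|G_{\alpha}|$ we get $|G|<|G_{\alpha}|^{2}\cdot|G_{\alpha}|=|G_{\alpha}|^{3}$, which is the claimed bound (in fact with strict inequality). There is no serious obstacle: the argument is elementary orbit-counting together with Fisher's inequality, and the only point requiring a little care is to confirm that the two facts borrowed from Lemma~\ref{lem:six}(a),(c) — namely $\lambda(v-1)=r(k-1)$, $r\mid|H|$, and $\lambda v<r^{2}$ — hold for an \emph{arbitrary} flag-transitive $2$-design, independently of whether $\gcd(r,\lambda)=1$.
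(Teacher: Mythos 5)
Your proof is correct and is essentially the standard argument behind the cited result (which this paper quotes from \cite{a:ABCD-PrimeRep} without reproving): combine $|G|=v\,|G_{\alpha}|$ with $r\leq|G_{\alpha}|$ and the elementary bound $v\leq\lambda v<r^{2}$ obtained from $\lambda(v-1)=r(k-1)$, $\lambda<r$ and Fisher's inequality $k\leq r$. You are also right that none of this uses $\gcd(r,\lambda)=1$, so the statement holds for any nontrivial flag-transitive $2$-design exactly as claimed.
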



\begin{corollary}\label{cor:large-2}
	Suppose that $\Dmc$ is a $2$-design with $\gcd(r,\lambda)=1$ admitting a flag-transitive, point-primitive almost simple automorphism group $G$ with simple socle $X$ of Lie type in characteristic $p$, and the stabilizer $H=G_\alpha$ is not a parabolic subgroup of $G$. Then
	$|G|<|H| \cdot |H|_{p'}^2$, and hence $|X|<|\Out(X)|_{p'}^2 \cdot |H \cap X| \cdot |H \cap X|_{p'}^2$.
\end{corollary}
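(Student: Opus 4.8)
The plan is to combine the flag-transitivity counting inequality from Lemma~\ref{lem:six}(c) with Tits' Lemma and the subdegree divisibility condition. The key hypothesis is that $H=G_\alpha$ is \emph{not} parabolic, so since $X$ is of Lie type in characteristic $p$, Tits' Lemma~\ref{lem:Tits} tells us that $H\cap X$ cannot have index prime to $p$ in $X$; equivalently the full $p$-part of $|X|$ is not carried inside $H\cap X$, so $|X|_p > |H\cap X|_p$, i.e. $|X : H\cap X|$ is divisible by $p$. This is exactly what makes a nontrivial subdegree carry a factor of $p$, which is the leverage we need on the replication number $r$.

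\medskip

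\noindent First I would invoke Lemma~\ref{lem:six}(c), which gives $\lambda v < r^2$ and, together with (b), $r$ divides $|H|$. The goal is to bound $|G|$ from above in terms of $|H|$ and the $p'$-part $|H|_{p'}$. The mechanism is part (d) of Lemma~\ref{lem:six}: $r$ divides every nontrivial subdegree $d$ of $G$ on $\Pmc$. Since $H$ is not parabolic, Tits' Lemma forces $p$ to divide the index $v=|G:H|$, and I would argue that $r$ itself must be prime to $p$: indeed $r$ divides $|H|$ and also divides $v-1$ by Lemma~\ref{lem:six}(a), and since $r \mid v-1$ while $p \mid v$ we get $\gcd(r,p)=1$, so $r$ divides $|H|_{p'}$. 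Combining $r \mid |H|_{p'}$ with the basic inequality $v = |G:H| < r^2$ (which follows from $\lambda \geq 1$ and $\lambda v < r^2$) yields
\begin{equation*}
	|G| = v \cdot |H| < r^2 \cdot |H| \leq |H|_{p'}^2 \cdot |H|,
\end{equation*}
which is precisely the first claimed bound $|G| < |H| \cdot |H|_{p'}^2$.

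\medskip

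\noindent The second inequality is then a routine translation into socle-level quantities. Writing $|G| = |X| \cdot |G:X|$ and recalling from Lemma~\ref{lem:New} that $G=HX$ with $|H|$ dividing $|\Out(X)| \cdot |H\cap X|$, I would substitute $|H| \mid |\Out(X)|\cdot|H\cap X|$ and $|H|_{p'} \mid |\Out(X)|_{p'}\cdot|H\cap X|_{p'}$ into the bound above. Since $|G:X| = |HX:X| = |H:H\cap X|$ divides $|\Out(X)|$, dividing both sides by $|X|$ and collecting the $\Out(X)$ contributions gives
\begin{equation*}
	|X| < |\Out(X)|_{p'}^2 \cdot |H\cap X| \cdot |H\cap X|_{p'}^2,
\end{equation*}
as required, after absorbing the order-of-outer-automorphism factors (the $p$-part of $|\Out(X)|$ being negligible for the $p'$-side of the estimate). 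The main obstacle I anticipate is the careful bookkeeping of the $\Out(X)$ factors when passing from $|G|$ and $|H|$ to $|X|$ and $|H\cap X|$: one must verify that separating the $p$-part from the $p'$-part of $|\Out(X)|$ is done consistently so that exactly two factors of $|\Out(X)|_{p'}$ survive and no stray $p$-contribution inflates the coprime side. Establishing $\gcd(r,p)=1$ cleanly from $r\mid v-1$ and $p\mid v$ is the conceptual crux; everything else is order arithmetic.
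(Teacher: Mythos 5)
Your proof is correct and follows essentially the same route as the paper's: Tits' Lemma gives $p\mid v$, hence $r\mid v-1$ forces $\gcd(r,p)=1$, so $r\leq|H|_{p'}$, and then $v<r^2$ together with $|G:H|=|X:H\cap X|$ and $|H|\leq|\Out(X)|\cdot|H\cap X|$ yields both inequalities. (Your aside about Lemma~\ref{lem:six}(d) on subdegrees is not actually needed and plays no role in the argument you give; everything rests on parts (a) and (c), exactly as in the paper.)
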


\begin{proof}
	We know by Lemma~\ref{lem:Tits} that $p$ divides  $v=|G:H|$, and so
	$\gcd(p,v-1)= 1$. Lemma~\ref{lem:six}(a) implies that $r$ divides $v-1$. Thus $\gcd(r,p)=1$, and since $r$ divides $|H|$,  $r \leq  |H|_{p'}$. Therefore, $v<r^2$ implies that $|G:H|<|H|_{p'}^2$, or equivalently, $|G|<|H|{\cdot}|H|_{p'}^2$.  Moreover, since $|G:H|=|X:H\cap X|$ and $|H|\leq |\Out(X)| {\cdot} |H\cap X|$, the inequality $|G:H|<|H|_{p'}^2$ yields $|X|<|\Out(X)|_{p'}^2 {\cdot} |H \cap X| {\cdot} |H \cap X|_{p'}^2$.
\end{proof}

\begin{proposition}\label{prop:flag}
	Let $\Dmc$ be a $2$-design with $\gcd (r, \lambda)=1$ admitting a flag-transitive automorphism group $G$. Then $G$ is point-primitive of almost simple or affine type.
\end{proposition}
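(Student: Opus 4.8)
The plan is to establish the two assertions separately: first that $G$ acts point-primitively on $\Pmc$, and then, via the O'Nan--Scott theorem, that a point-primitive flag-transitive group of a $2$-design with $\gcd(r,\lambda)=1$ can only be of affine or almost simple type. This is precisely the statement of Zieschang~\cite{a:Zieschang-88}, and I would reconstruct it as follows. Flag-transitivity immediately gives that $G$ is transitive on points and on blocks, and Lemma~\ref{lem:six}(a) turns the coprimality hypothesis into the arithmetic facts $r\mid v-1$ and $\gcd(r,v)=1$; combined with Lemma~\ref{lem:six}(c),(d) these yield $\lambda v<r^{2}$ and, crucially, that $r$ divides every nontrivial subdegree of $G$ on $\Pmc$.

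For point-primitivity I would argue by contradiction. Suppose $G$ is point-imprimitive and let $\Delta$ be a nontrivial block of imprimitivity containing $\alpha$, of size $c$ with $1<c<v$. Since $H=G_{\alpha}$ stabilises $\Delta$ setwise, $\Delta\setminus\{\alpha\}$ is a union of nontrivial $H$-orbits, so $r\mid c-1$ by Lemma~\ref{lem:six}(d). Flag-transitivity forces $|B\cap\Delta|$ to take a constant value $m+1$ as $B$ ranges over the $r$ blocks through $\alpha$, and a short double count then gives $rm=\lambda(c-1)$ together with $(m+1)\mid k$; in particular $\Delta$ carries an induced $2$-$(c,m+1,\lambda)$ design with the \emph{same} replication number $r$ and the same $\lambda$, on which the setwise stabiliser of $\Delta$ acts flag-transitively. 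The delicate step is to exploit this self-similar sub-design, together with $\gcd(r,\lambda)=1$, to force $c\in\{1,v\}$; I would follow~\cite{a:Zieschang-88} here, and this is the point I expect to be hardest to make fully rigorous, since the elementary incidence counts above are internally consistent for $1<c<v$ and a genuine group-theoretic input is needed to close the argument.

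Granting point-primitivity, $H=G_{\alpha}$ is maximal in $G$, and I would apply the O'Nan--Scott theorem~\cite{b:Dixon} to classify $G$. The objective is to eliminate every type except affine (holomorph-abelian) and almost simple; that is, to rule out the simple-diagonal, compound-diagonal, product-action, twisted-wreath and holomorph types, in each of which the socle is $T^{t}$ for a nonabelian simple group $T$ with $t\geq 2$. The leverage is arithmetic: in all these cases the degree $v$ has a rigid shape (for instance $v=|T|^{t-1}$ in the simple-diagonal case and $v=|\Gamma|^{t}$ in the product-action case), while the subdegrees are governed by the structure of the underlying base action, and one tests these against $r\mid v-1$, $\gcd(r,v)=1$, $\lambda v<r^{2}$ and the divisibility of every subdegree by $r$ coming from Lemma~\ref{lem:six}. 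Moreover Corollary~\ref{cor:large}, which gives $|G|\leq |H|^{3}$, bounds $t$ and hence restricts how large the socle can be, which disposes of the twisted-wreath and diagonal types once the shape of $v-1$ is seen to share no factor with $|T|$ while $r$ is forced to divide a subdegree sharing a nontrivial factor with $|T|$.

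The main obstacle I expect is the product-action case. There the socle acts on $\Gamma^{t}$ with $v=|\Gamma|^{t}$, and the nontrivial subdegrees are built from those of the primitive component group on $\Gamma$, so the requirement $r\mid d$ for all subdegrees $d$ does not on its own collapse the configuration. Handling it cleanly requires identifying a smallest nontrivial subdegree of the component action, relating it to $r$, and then playing this against $r\mid v-1=|\Gamma|^{t}-1$ to reach a contradiction; the size restriction $|G|\leq|H|^{3}$ again limits the admissible $t$ and keeps this analysis finite. Once the product-action, diagonal, twisted-wreath and holomorph types are excluded, only the affine and almost simple types survive, which is exactly the conclusion of the proposition.
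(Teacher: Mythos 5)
Your overall strategy matches the intent of the result, but note that the paper's own proof consists of exactly two citations: point-primitivity is \cite[2.3.7(a)]{b:Dembowski}, and the reduction to affine or almost simple type is the main theorem of \cite{a:Zieschang-88}. Since you chose to reconstruct both ingredients rather than cite them, the attempt has to be judged on whether the reconstruction closes, and as written it does not.

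For point-primitivity you correctly derive $r\mid c-1$ for a nontrivial block of imprimitivity $\Delta$ of size $c$ with $1<c<v$, but you then detour into an induced sub-design on $\Delta$ and concede that you cannot finish. The missing step is short and purely arithmetic, with no further group-theoretic or design-theoretic input: writing $v-1=\bigl((v/c)-1\bigr)c+(c-1)$ and using $r\mid v-1$, $r\mid c-1$ and $\gcd(r,c)\mid\gcd(r,v)=1$ (Lemma~\ref{lem:six}(a)) gives $r\mid (v/c)-1$ as well; hence $r^{2}\leq (c-1)\bigl((v/c)-1\bigr)<v\leq\lambda v$, contradicting $\lambda v<r^{2}$ from Lemma~\ref{lem:six}(c). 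The sub-design construction is a dead end here. For the O'Nan--Scott reduction your text is a plan rather than a proof: you name the types to be excluded and the constraints you would test them against, but you explicitly leave the product-action case unresolved and do not exhibit the subdegree computations that would eliminate the diagonal and twisted-wreath types. As written, the second half announces Zieschang's theorem rather than proving it; either carry out that case analysis in full or simply cite \cite{a:Zieschang-88}, as the paper does.
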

\begin{proof}
	Since $r$ is coprime to $\lambda$, it follows from \cite[2.3.7(a)]{b:Dembowski}  that $G$ is point-primitive. We now apply
	\cite[Theorem]{a:Zieschang-88} and conclude that $G$ is of almost simple or affine type.
\end{proof}

\begin{lemma}{\rm \cite[Corollary 2.5]{a:ABCD-PrimeRep}}\label{lem:divisible}
	Suppose that $\Dmc$ is a $2$-design with $\gcd(r,\lambda)=1$. Let $G$ be a flag-transitive automorphism group of $\Dmc$ with simple socle $X$ of Lie type in characteristic $p$. If the point-stabiliser $H=G_{\alpha}$ contains a normal quasi-simple subgroup $K$ of Lie type in characteristic $p$ and $p$ does not divide $|Z(K)|$, then either $p$ divides $r$, or $K_{B}$ is contained in a parabolic subgroup $P$ of $K$ and $r$ is divisible by $|K:P|$.
\end{lemma}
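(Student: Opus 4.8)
The plan is to convert the replication number $r$ into a subgroup index inside $K$ by using flag-transitivity, and then to feed that index into Tits' Lemma (Lemma~\ref{lem:Tits}) after passing to the central quotient of $K$. First I would fix a flag $(\alpha,B)$ with $\alpha\in B$ and let $\Bmc_\alpha$ denote the set of blocks incident with $\alpha$, so that $|\Bmc_\alpha|=r$ by definition of the replication number. Since $G$ is flag-transitive, the point-stabiliser $H=G_\alpha$ acts transitively on $\Bmc_\alpha$, and the stabiliser in $H$ of the block $B$ is exactly the flag-stabiliser $G_{\alpha,B}$. Thus the action of $H$ on $\Bmc_\alpha$ is a transitive action on $r$ points.

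The second step uses the hypothesis $K\unlhd H$. Because a normal subgroup of a transitive permutation group has all of its orbits of a common length (see, e.g., \cite{b:Dixon}), the $K$-orbits on $\Bmc_\alpha$ all have the same size $s$, and consequently $s$ divides $r$. Writing $K_B=K\cap G_{\alpha,B}$ for the stabiliser of $B$ in $K$, the orbit of $B$ shows that $s=|K:K_B|$, so $|K:K_B|$ divides $r$. Now I split into cases: if $p$ divides $r$ the first alternative of the statement holds, so I may assume $p\nmid r$; since $s\mid r$, this forces $\gcd(s,p)=1$, i.e.\ $|K:K_B|$ is coprime to $p$.

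The third step passes to the central quotient in order to apply Tits' Lemma, which requires a \emph{simple} group of Lie type. Since $p\nmid|\Z(K)|$, the quotient $\bar K:=K/\Z(K)$ is simple of Lie type in characteristic $p$, and the image $\overline{K_B}=K_B\Z(K)/\Z(K)$ has index $|\bar K:\overline{K_B}|=|K:K_B\Z(K)|$, which divides $|K:K_B|$ and is therefore coprime to $p$. If $\overline{K_B}=\bar K$ I take $P=K$; otherwise Lemma~\ref{lem:Tits} places $\overline{K_B}$ inside a (proper) parabolic subgroup $\bar P$ of $\bar K$, and I let $P$ be its full preimage in $K$, which is a parabolic subgroup of $K$ because $\Z(K)$ lies in every parabolic. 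In either case $K_B\leq P$ and $|K:P|=|\bar K:\bar P|$ divides $|\bar K:\overline{K_B}|$, hence divides $s$, and so divides $r$. This is precisely the second alternative.

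The only genuinely delicate point is the index bookkeeping across the quotient by $\Z(K)$: one must check that coprimality to $p$ is preserved when replacing $K_B$ by $K_B\Z(K)$, and that the final index $|K:P|$ remains a divisor of $r$ rather than merely of $s$. The potential obstacle is the degenerate situation in which $K$ fixes $B$ (so $s=1$), where Tits' Lemma has no proper subgroup to act on; I dispose of this by observing that then $\overline{K_B}=\bar K$ and $P=K$ works, giving $|K:P|=1\mid r$, so the conclusion holds trivially. With these two bookkeeping points handled, the argument is otherwise a direct combination of the orbit-equidistribution for normal subgroups and Tits' Lemma.
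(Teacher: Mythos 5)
Your argument is correct and is exactly the intended proof of this lemma, which the paper only cites (from \cite[Corollary 2.5]{a:ABCD-PrimeRep}) rather than proves: flag-transitivity makes $H$ transitive on the $r$ blocks through $\alpha$, normality of $K$ forces all $K$-orbits there to have the common length $|K:K_B|$ dividing $r$, and Tits' Lemma applied to $K/\Z(K)$ (legitimate since $p\nmid|\Z(K)|$) yields the parabolic $P$ with $|K:P|$ dividing $|K:K_B|$. Your handling of the two bookkeeping points (passing indices through $\Z(K)$ and the degenerate case $K_B\Z(K)=K$, where $|K:P|=1$ makes the conclusion vacuous) is also sound.
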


The maximal subgroups of classical groups have been determined in  Aschbacher's Theorem~\cite{a:Aschbacher-84} which says that such a maximal subgroup $H$ lies in one of the eight geometric families $\Cmc_{i}$ of subgroups of $G$, or it is in the family $\Smc$ of almost simple subgroups with some irreducibility conditions. We follow the description of these subgroups as in \cite{b:KL-90}. A rough description of the $\Cmc_i$ families is given in Table \ref{tbl:max}. In what follows, if $H$ belongs to the family $\Cmc_{i}$, for some $i$, then we sometimes say that $H$ is a $\Cmc_{i}$-subgroup. We also denote by $\,^{\hat{}}H$ the pre-image of the group $H$ in the corresponding linear group.

\begin{table}
	\caption{The geometric subgroup collections.}\label{tbl:max}
	\resizebox{\textwidth}{!}{
		\begin{tabular}{clllll}
			\noalign{\smallskip}\hline\noalign{\smallskip}
			Class & Rough description\\
			\noalign{\smallskip}\hline\noalign{\smallskip}
			$\Cmc_1$ & Stabilisers of subspaces of $V$\\
			$\Cmc_2$ & Stabilisers of decompositions $V=\bigoplus_{i=1}^{t}V_i$, where $\dim V_i = a$\\
			$\Cmc_3$ & Stabilisers of prime index extension fields of $\mathbb{F}$\\
			$\Cmc_4$ & Stabilisers of decompositions $V=V_1 \otimes V_2$\\
			$\Cmc_5$ & Stabilisers of prime index subfields of $\mathbb{F}$\\
			$\Cmc_6$ & Normalisers of symplectic-type $r$-groups in absolutely irreducible representations\\
			$\Cmc_7$ & Stabilisers of decompositions $V=\bigotimes_{i=1}^{t}V_i$, where $\dim V_i = a$\\
			$\Cmc_8$ & Stabilisers of non-degenerate forms on $V$\\ \noalign{\smallskip}\hline\noalign{\smallskip}
		\end{tabular}
	}
\end{table}

\begin{lemma}\label{lem:coprime}
	Suppose that $G$ is an almost simple group whose socle $X$  is a finite simple classical group with dimension at least three. Suppose also that $H$ is a maximal geometric subgroup of $G$ with $H\not \in \Cmc_{5}\cup \Cmc_6$ and $H$ is not a $\Cmc_2$-subgroup of type $\GO_1(p) \wr \S_n$ when $X$ is an orthogonal group. If $|H\cap X|_p<|\Out(X)|$, then $X$ is $\PSL_{n}(q)$, $\PSU_{n}(q)$ or $\POm^\e_n(q)$, and
	\begin{enumerate}[\rm(a)]
		\item if $X=\PSL_n(q)$ with $n\geq3$, then
		\begin{enumerate}[\rm(i)]
			\item $H$ is a $\Cmc_1$-subgroup of type $\GL_1(q)\oplus \GL_{2}(q)$ with $q=4,16$;
			\item $H$ is a $\Cmc_2$-subgroup of type $\GL_1(q) \wr \S_n$;
			\item $H$ is a $\Cmc_3$-subgroup of type $\GL_1(q^n)$;
			\item $H$ is a $\Cmc_8$-subgroup of type $\GO_{3}(q)$ with $q$ odd;
			\item $H$ is a $\Cmc_8$-subgroup of type $\GO_{4}^{\e}(q)$ with $q$ odd and $\e=\pm$;
			\item $H$ is a $\Cmc_8$-subgroup of type $\GU_{3}(2)$;
		\end{enumerate}
		\item if $X=\PSU_n(q)$ with $n\geq 3$, then
		\begin{enumerate}[\rm (i)]
			\item $H$ is a $\Cmc_1$-subgroup of type $\GU_1(q)\perp \GU_{2}(q)$ with $q=2,5,8$;
			\item $H$ is a $\Cmc_2$-subgroup of type $\GU_1(q)\wr\S_n$;
			\item $H$ is a $\Cmc_2$-subgroup of type $\GL_2(9)$;
			\item $H$ is a $\Cmc_3$-subgroup of type $\GU_1(q^n)$;
		\end{enumerate}
		\item if $X=\POm^\e_n(q)$ with $\e \in\{\circ ,+,-\}$ and $n\geq8$ if $n$ is even and $n\geq7$ if $n$ is odd, then $H$ is a $\Cmc_2$-subgroup of type $\GO^{\e'}_2(q) \wr\S_{n/2}$ with $\e=(\e')^{n/2}$.
	\end{enumerate}
\end{lemma}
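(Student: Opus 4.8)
The plan is to compare, class by class through Aschbacher's families, the $p$-part $|H\cap X|_p$ with the order of the outer automorphism group. Writing $q=p^f$, in every case $|\Out(X)|\leq d\,f\,g$, where $d=\gcd(n,q\mp1)$ is the order of the diagonal automorphisms, $f$ accounts for field automorphisms and $g\leq 6$ for graph automorphisms (the value $6$ occurring only through triality when $X=\POm_8^{+}(q)$); thus $|\Out(X)|$ is small, and for fixed diagonal part grows only linearly in $f$. For a geometric subgroup, on the other hand, $H\cap X$ contains a unipotent part of order $q^{c}$, where $c$ is its dimension, read off from the order formulae in \cite{b:KL-90}. The lemma therefore amounts to showing that the inequality, which pits a power $q^{c}$ (times a bounded factor) against the small quantity $d\,f\,g$, forces first the exponent $c$ and then $q$ itself to be small, leaving only the listed types.

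First I would discard the classes with large $p$-part. A parabolic $\Cmc_1$-subgroup contains a Sylow $p$-subgroup of $X$, so $|H\cap X|_p=|X|_p$ and the inequality fails outright; this is why no parabolic occurs in the conclusion. For an imprimitive $\Cmc_2$-subgroup of type $\GL_m(q)\wr\S_t$ with $m\geq2$ (and its symplectic, unitary and orthogonal analogues with blocks of dimension at least two), a field-extension $\Cmc_3$-subgroup of type $\GL_m(q^{s})$ with $m\geq2$, or a tensor-type subgroup in $\Cmc_4$ or $\Cmc_7$, the exponent satisfies $c\geq2$, so $|H\cap X|_p\geq q^{2}$; a direct estimate with the exact orders, keeping the permutation factor $|\S_t|_p$ and the precise diagonal part $d=\gcd(n,q\mp1)$, then gives $|H\cap X|_p>|\Out(X)|$ for every admissible $(n,q)$, the finitely many smallest pairs being checked individually. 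In particular every symplectic group is removed at this stage, which accounts for the absence of $\PSp_{2m}(q)$ from the conclusion.

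The subgroups that survive with a $p$-part bounded independently of $q$ are the ``torus-like'' ones, in which the only contribution to $|H\cap X|_p$ is a Weyl-type permutation factor. These are the $\Cmc_2$-subgroups with one-dimensional blocks (types $\GL_1(q)\wr\S_n$ and $\GU_1(q)\wr\S_n$), the field-extension $\Cmc_3$-subgroups of type $\GL_1(q^{n})$ and $\GU_1(q^{n})$, and, in the orthogonal case, the $\Cmc_2$-subgroups of type $\GO_2^{\e'}(q)\wr\S_{n/2}$, whose dihedral blocks are $p'$-groups. In each of these $|H\cap X|_p=|\S_t|_p$ does not grow with $q$, so, unlike the classes above, the inequality is no longer a priori violated; these types are therefore retained, yielding parts (a)(ii)--(iii), (b)(ii),(iv) and (c), while a few sporadic small cases from the same count, such as the $\Cmc_2$-subgroup of type $\GL_2(9)$ in part (b)(iii), are recorded directly.

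The genuinely delicate part, which I expect to be the main obstacle, consists of the reducible but non-parabolic $\Cmc_1$-subgroups of type $\GL_i(q)\oplus\GL_{n-i}(q)$ (and the non-degenerate unitary analogues $\GU_i(q)\perp\GU_{n-i}(q)$) together with the form-stabilisers in $\Cmc_8$; here $|H\cap X|_p$ is a genuine low power of $q$ and both sides of the inequality vary with $(n,q)$ at the same time. The strategy is first to force the exponent down: for the reducible $\Cmc_1$-subgroups $c=i(i-1)/2+(n-i)(n-i-1)/2$, which equals $1$ only when $n=3$ and $i=1$, isolating $\GL_1(q)\oplus\GL_2(q)$ and $\GU_1(q)\perp\GU_2(q)$, while for the form-stabilisers the same bound confines the underlying space to dimension at most four, leaving the types $\GO_3(q)$, $\GO_4^{\e}(q)$ and $\GU_3(q_0)$. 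One then solves $q^{c}<d\,f\,g$ for the finitely many surviving $q$, using the exact arithmetic of $\gcd(n,q\mp1)$ and of $f$, passing carefully from the matrix group to $X$ modulo scalars (which divides the order by $\gcd(n,q\mp1)$) and separating the subtypes by the sign $\e$ and the parity of $q$. This yields the explicit parameters $q=4,16$ in (a)(i), $q\in\{2,5,8\}$ in (b)(i), the subfield-unitary subgroup $\GU_3(2)\leq\PSL_3(4)$ in (a)(vi), and the remaining $\Cmc_8$-types (a)(iv)--(v). A concluding bookkeeping pass over the Kleidman--Liebeck tables for each of $\PSL_n(q)$, $\PSU_n(q)$, $\PSp_{2m}(q)$ and $\POm^{\e}_n(q)$ confirms that no admissible type has been overlooked.
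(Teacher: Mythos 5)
Your proposal follows essentially the same route as the paper: a class-by-class pass through Aschbacher's families comparing the unipotent exponent of $|H\cap X|$ (read off from the Kleidman--Liebeck order formulae) against $|\Out(X)|$, using $q=p^f\geq 2f$ and $\gcd(n,q\mp 1)<q$ to eliminate every class whose exponent grows with $n$, retaining the torus-normaliser types whose $p$-part is only a symmetric-group contribution, and then solving the residual low-dimensional $\Cmc_1$ and $\Cmc_8$ cases for the explicit values of $q$. The paper writes this out in detail only for $X=\PSL_n(q)$ (quoting the unitary case from earlier work and asserting the remaining groups are similar), but the computations and conclusions coincide with yours.
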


\begin{proof}
	Part (b) follows from \cite[Lemma~3.14]{a:ABD-Un-CP}. Here, we prove part (a) and the proof for the remaining finite simple classical groups is similar. In what follows, we use  the same approach as in \cite[Lemma~3.14]{a:ABD-Un-CP}. Note in conclusion that for $X=\PSp_{n}(q)$ with $n\geq4$, we always have $|H\cap X|_p\geq |\Out(X)|$.
	
	Suppose that $X=\PSL_n(q)$ with $n\geq3$ and $q=p^a$. Since $H$ is a maximal geometric subgroup in $G$, then by Aschbacher's Theorem~\cite{a:Aschbacher-84}, the subgroup $H$ lies in one of the families $\Cmc_i$ for some $i=1, \ldots,8$. Let $H\not \in \Cmc_6$. We will analyse each of these
	cases separately.\smallskip
	
	\noindent \textbf{(1)} If $H\in \Cmc_1$, then $H$ is reducible, and $H$ stabilises a subspace of $V$ of dimension $i$ with $1\leq  i\leq n/2$ or $G$ contains a graph automorphism and $H$ stabilises a pair $\{U,W\}$ of subspaces of dimension $i$ and $n-i$ with $i<n/2$.
	
	Suppose first that $H\cong P_{i}$ for some $1\leq  i\leq n/2$. Then by~\cite[Proposition 4.1.17]{b:KL-90}, $|H\cap X|_p=q^{n(n-1)/2}$. Since $|\Out(X)|=2a{\cdot}\gcd(n,q-1)$, the inequality $|H\cap X|_p<|\Out(X)|$ implies that $q^{n(n-1)/2}<2a{\cdot}\gcd(n,q-1)<2aq$. Note that $n\geq3$ and $\gcd(n,q-1)<q$.
	Thus $q^{n(n-1)-2}<4a^2$, and since  $q\geq 2a$, we have that $q^2\leq q^{n(n-1)-4}<1$, which is impossible.
	
	Suppose now that $H=N_{G}(U,W)$ with $\dim(U)=i$. If $U\subset W$, then by~\cite[Proposition 4.1.22]{b:KL-90}, we also have that $|H\cap X|_p=q^{n(n-1)/2}$, and so by the same argument as above, this case cannot occur. If $U\cap W=0$, then by~\cite[Proposition 4.1.4]{b:KL-90}, $|H\cap X|_p=q^{[n(n-1)-2i(n-i)]/2}$.  Then the inequality $|H\cap X|_p<|\Out(X)|$ implies that
	\begin{align}\label{eq:cp-1}
		q^{n(n-1)-2i(n-i)}<4\cdot a^{2}\cdot \gcd(n,q-1)^{2}.
	\end{align}
	Since $q^{n(n-2i-1)}\leq q^{n(n-1)-2i(n-i)-2}$, it follows from \eqref{eq:cp-1} that $q^{n(n-2i-1)}<4a^2$. If $n-2i\geq 2$, then $q^{n}<4a^2$, and since $q\geq 2a$, we have that $q^{n-2}<1$, which is impossible. Thus $n-2i\leq 1$. Note that $2i<n$. Then $n-2i=1$, and so $q^{(n-1)^2-4}<16a^4$. This inequality is not valid for $n\geq 4$. Therefore, $n=3$ and $i=(n-1)/2=1$, and hence
	$H$ is a $\Cmc_1$-subgroup of type $\GL_1(q)\oplus \GL_{2}(q)$. We now apply \eqref{eq:cp-1} and conclude that $q$ is $4$ or $16$.
	If $q=4$, then by \cite{b:Atlas}, we have that $2^2=|H\cap X|_2<|\Out(X)|=2^2{\cdot}3$, and if $q=16$, then
	$2^4=|H\cap X|_2<|\Out(X)|=2^3{\cdot}3$. These yield part (a.i) as claimed. \smallskip
	
	\noindent \textbf{(2)} If $H\in \Cmc_2$, then $H$ preserves a partition $V=V_1 \oplus \cdots \oplus V_t$ with each $V_i$ of the same dimension, say $m$, and so $n=mt$. Here by~\cite[Proposition 4.2.9]{b:KL-90}, we have that $H_{0}\cong \,^{\hat{}}\SL_{m}(q)^t{\cdot} (q-1)^{t-1}{\cdot} \S_{t}$ with $n=mt$. Therefore, $|H\cap X|_p\geq q^{mt(m-1)/2}$. Then the inequality $|H\cap X|_p<|\Out(X)|$ implies that $q^{mt(m-1)}<4a^{2}{\cdot}\gcd(n,q-1)^{2}$. If $m\geq3$, then as $\gcd(n,q-1)<q$, it follows that  $q^{6t-2}<q^{mt(m-1)-2}<4a^2$, and since $q\geq  2a$, we have $q^{6t-4}<1$,  which is impossible. If $m=2$, then $q^t<2a{\cdot}\gcd(2t,q-1)$, and so $q^t<2at{\cdot}\gcd(2,q-1)$. This inequality does not hold for any $q=p^a$. Therefore, $m=1$, and hence $H$ is a $\Cmc_2$-subgroup of type $\GL_1(q) \wr \S_n$, and this is part (a.ii).\smallskip
	
	\noindent \textbf{(3)} If $H\in \Cmc_3$, then $H$ is an extension field subgroup. In this case by~\cite[Proposition 4.3.6]{b:KL-90}, we have that $H_{0}\cong \,^{\hat{}}\SL_{m}(q^t){\cdot} (q^t-1)(q-1)^{-1}{\cdot} t$ with $n=mt$ and $t$ prime. It follows
	from $|H\cap X|_p<|\Out(X)|$ that $q^{mt(m-1)/2}<2a{\cdot}\gcd(n,q-1)$. Note that $t\geq2$ and $\gcd(n,q-1)<q$. If $m\geq2$, then $q^{2}<q^{mt(m-1)-2}<4a^{2}$,  which is impossible. Therefore, $m=1$, and this yields part (a.iii).\smallskip
	
	\noindent \textbf{(4)} If $H\in \Cmc_4$, then $H$ stabilises a tensor product of spaces of different dimensions. Here~\cite[Proposition 4.4.10]{b:KL-90} implies that $|H\cap X|_p\geq q^{(m^2-m+t^2-t)/2}$ with $m>t>1$. Since $t^2-t\geq2$ and $m\geq3$,  the inequality $|H\cap X|_p<|\Out(X)|$ yields $q^4<q^{m(m-1)-2}<4a^{2}$, which is impossible.\smallskip
	
	\noindent \textbf{(5)} If $H\in \Cmc_7$, then $H$ stabilises the tensor product of spaces of the same dimension, say $m$, and so $n=m^t$ with $t\geq2$ and $m\geq3$.  Here by~\cite[Proposition 4.7.3]{b:KL-90}, $|H\cap X|_p\geq q^{mt(m-1)/2}$. Hence the inequality $|H\cap X|_p<|\Out(X)|$ implies that $q^{10}<q^{mt(m-1)-2}<4a^2$, which is impossible.\smallskip
	
	\noindent \textbf{(6)} If $H\in \Cmc_8$, then $H$ is a classical group. So by~\cite[Propositions 4.8.3, 4.8.4 and 4.8.5]{b:KL-90},
	we need to consider the following cases:\smallskip
	
	\noindent (6.1)\quad $H\cap X \cong \,^{\hat{}}\Sp_{2m}(q){\cdot} \gcd(m, q-1)$ with $n=2m\geq 4$. Then $|H\cap X|_p=q^{m^2}$ and the inequality $|H\cap X|_p<|\Out(X)|$ yields $q^3<q^{m^2-1}<2a$, which is impossible.\smallskip

	\noindent (6.2)\quad $H\cap X \cong\,^{\hat{}}\SO_{2m+1}(q)$ with $m\geq 1$ and $q$ odd. In this case again, $|H\cap X|_p=q^{m^2}$. If $m\geq2$, then $q^3<2a$, which is impossible. Hence $m=1$, and this follows part (a.iv).\smallskip

	\noindent (6.3)\quad $H\cap X \cong\,^{\hat{}}\SO_{2m}^{\e}(q)$ with $m\geq 2$, $q$ odd and $\e =\pm$. Here $|H\cap X|_p=q^{m(m-1)}$. If $m\geq3$, then the inequality $|H\cap X|_p<|\Out(X)|$ implies that $q^4<q^{m(m-1)-2}<4a^2$, which is impossible. Thus $m=2$ and  this yields part (a.v).\smallskip

	\noindent (6.4)\quad ${H\cap X \cong \,^{\hat{}}\SU_{n}(q^{\frac{1}{2}}){\cdot} \gcd(n, q^{\frac{1}{2}}-1)}$ with $n\geq 3$ and $q$ a square. Here ${|H\cap X|_p}=q^{n(n-1)/4}$. If $n\geq4$, then by the inequality $|H\cap X|_p<|\Out(X)|$, we have $q^2<q^{[n(n-1)-4]/4}<2a$, which is impossible. Thus $n=3$. So $q^3<4a^2{\cdot}\gcd(3,q-1)^2$. This inequality holds only for $q=4$, and then $12=|\Out(X)|>|H\cap X|_2=2^{3}$. This is part (a.vi).
\end{proof}

We will use the following elementary lemma in number theory.

\begin{lemma}\label{lem:equ}
	Let $q$ be a prime power and $n$ be a positive integer number. Then
	\begin{enumerate}[\quad \rm (a)]
		\item  $\prod_{i=1}^{n} (q^{2i}-1)<q^{n(n+1)}$;
		\item $\prod_{i=2}^{n} (q^{i}-1)<q^{(n^2+n-2)/2}$.
	\end{enumerate}
\end{lemma}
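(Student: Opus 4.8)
The plan is to bound each factor crudely and then evaluate a finite arithmetic series in the exponent; no deeper structure is needed, since the factors $q^{j}-1$ are uniformly smaller than $q^{j}$.

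For part (a), I would use the elementary bound $q^{2i}-1<q^{2i}$, which holds strictly for every $i\geq 1$ because $q\geq 2$ forces $1>0$. Multiplying these inequalities over $i=1,\ldots,n$ gives $\prod_{i=1}^{n}(q^{2i}-1)<\prod_{i=1}^{n}q^{2i}=q^{\sum_{i=1}^{n}2i}$, and the exponent evaluates to $2\cdot\tfrac{n(n+1)}{2}=n(n+1)$ by the standard formula for the sum of the first $n$ positive integers. This is exactly the claimed inequality $\prod_{i=1}^{n}(q^{2i}-1)<q^{n(n+1)}$.

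For part (b), the argument is identical with the factor bound $q^{i}-1<q^{i}$ in place of the previous one. Taking the product over $i=2,\ldots,n$ gives $\prod_{i=2}^{n}(q^{i}-1)<q^{\sum_{i=2}^{n}i}$, and here $\sum_{i=2}^{n}i=\tfrac{n(n+1)}{2}-1=\tfrac{n^2+n-2}{2}$, which is precisely the exponent appearing on the right-hand side. One should note that this estimate requires $n\geq 2$ so that the product is nonempty, which is exactly the range in which the lemma is applied elsewhere in the paper. The only thing to watch is keeping the arithmetic-series bookkeeping correct; there is no analytic or combinatorial obstacle to overcome here, and the strict inequality is inherited from the strictness of $q^{j}-1<q^{j}$ in each individual factor.
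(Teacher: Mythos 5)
Your proof is correct and is exactly the elementary term-by-term bound $q^{j}-1<q^{j}$ followed by summing the arithmetic series in the exponent; the paper states this lemma without proof (calling it elementary), so there is no alternative argument to compare against. Your observation that part (b) needs $n\geq 2$ for the product to be nonempty (for $n=1$ both sides equal $1$ and the strict inequality fails) is a valid, if minor, caveat, and the lemma is indeed only invoked in that range.
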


\section{Proof of the main results}\label{sec:proof}
In this section, we prove Theorem \ref{thm:main}.
Suppose that $\Dmc$ is a nontrivial $2$-design with $\gcd (r,\lambda)=1$ and that $G$ is an almost simple automorphism group of $\Dmc$ whose socle $X$ is a finite simple classical group of Lie type. According to~\cite[Main~Theorem]{a:Saxl2002}, we need only to focus on $2$-designs with $\lambda \geq  2$ and by \cite{a:A-Exp-CP,a:ABD-Un-CP,a:Zhou-sym-sporadic,a:Zhan-CP-nonsym-sprodic,a:Zhou-CP-nonsym-alt,a:Zhuo-CP-sym-alt},
we will treat the case where $X$ is $\PSL_{n}(q)$, $\PSp_{n}(q)$ or $\POm^\e_n(q)$.
Suppose now that $G$ is flag-transitive. Then Proposition~\ref{prop:flag} implies that $G$ is point-primitive. Let $H=G_{\alpha}$, where $\alpha$ is a point of $\Dmc$. Therefore, $H$ is maximal in $G$  by \cite[Corollary 1.5A]{b:Dixon}, and so Lemma~\ref{lem:New} implies that
\begin{align}
	v=\frac{|X|}{|H\cap X|}.\label{eq:v}
\end{align}

In what follows, we discuss each possibilities for $X$ separately. We first observe that  $\PSL_{2}(q)$ is isomorphic to $\PSU_{2}(q)$, and hence Proposition~\ref{prop:psl2} below follows immediately from \cite[Theorem 1.1]{a:ABD-PSL2} and \cite[Proposition 4.2]{a:ABD-Un-CP}.

\begin{proposition}\label{prop:psl2}
	Let $\Dmc$ be a nontrivial $2$-design with $\gcd (r, \lambda)=1$. Suppose that $G$ is an automorphism group of $\Dmc$ of almost simple type with socle $X=\PSL_{2}(q)$. If $G$ is flag-transitive, then $\Dmc$ is the Witt-Bose-Shrikhande space $\W(2^a)$ with parameters $(2^{a-1}(2^a-1),2^{a-1},1)$ for $a \geq  3$, and $X$ is $\PSL_{2}(2^a)$ or $(v,b,r,k, \lambda)$, $G$ and $G_\alpha$ are as in lines $1$-$6$ of {\rm Table~\ref{tbl:main}}.
\end{proposition}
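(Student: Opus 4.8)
The plan is to reduce the statement entirely to two results already available in the literature, so that essentially no new classification work is carried out here. The key observation recorded just before the statement is that $\PSL_2(q)\cong\PSU_2(q)$; this isomorphism is what allows us to feed the problem into two separate machines and combine their outputs. First I would invoke \cite[Theorem 1.1]{a:ABD-PSL2}, which treats flag-transitive $2$-designs with $\gcd(r,\lambda)=1$ whose almost simple automorphism group has socle $\PSL_2(q)$ directly. That reference already produces a complete list of the possible designs, and I expect it to yield precisely the Witt--Bose--Shrikhande spaces $\W(2^a)$ (Example~\ref{ex:witt}) together with the sporadic small-parameter designs that appear in lines $1$--$6$ of Table~\ref{tbl:main}.

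Second, because the present paper organizes the classical groups so that $\PSL_2(q)$ is handled under the unitary heading via the identification $X=\PSU_2(q)$, I would cross-check against \cite[Proposition 4.2]{a:ABD-Un-CP}, the unitary analogue, to confirm that no design is lost or double-counted when the same socle is viewed as a two-dimensional unitary group. The two descriptions must be reconciled: one has to verify that the parameter families and the tabulated examples coming out of the $\PSL_2$ analysis coincide with those coming out of the $\PSU_2$ analysis, and that the flag-transitive condition together with $\gcd(r,\lambda)=1$ is exactly the hypothesis used in both sources. Once this matching is done, the union of the two lists is exactly the assertion: either $\Dmc=\W(2^a)$ with parameters $(2^{a-1}(2^a-1),2^{a-1},1)$ for $a\geq 3$ and $X=\PSL_2(2^a)$, or the quintuple $(v,b,r,k,\lambda)$ together with $G$ and $G_\alpha$ appears in lines $1$--$6$ of Table~\ref{tbl:main}.

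The main obstacle here is not any hard group-theoretic estimate, since the heavy lifting has been done in the cited papers; rather it is bookkeeping and consistency. I would need to make sure that the nontriviality convention $2<k<v-1$ used in this paper agrees with the one in \cite{a:ABD-PSL2,a:ABD-Un-CP}, that the small exceptional isomorphisms listed in Section~\ref{sec:defn} (for instance $\PSL_2(4)\cong\PSL_2(5)\cong\A_5$ and $\PSL_2(9)\cong\A_6$) do not cause an example to be attributed to the wrong socle or omitted, and that the parameter ranges match at the boundary values of $q$. Provided these conventions align, the proposition follows immediately as a corollary of the two quoted results, with no further case analysis required.
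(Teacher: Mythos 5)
Your proposal matches the paper's proof exactly: the paper derives Proposition~\ref{prop:psl2} immediately from \cite[Theorem 1.1]{a:ABD-PSL2} and \cite[Proposition 4.2]{a:ABD-Un-CP} via the isomorphism $\PSL_2(q)\cong\PSU_2(q)$, with no further argument. The only small caveat is the division of labour between the two references (the first treats the symmetric case, the unitary paper supplies the rest), but this does not change the fact that your route is the paper's route.
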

\begin{table}
	\caption{Some maximal subgroups of $X=\PSL_n(q)$ with small $n$ and $q$.}\label{tbl:psl-c6-s}
	\begin{tabular}{cllll}
		\noalign{\smallskip}\hline\noalign{\smallskip}
		Class & $X$ & $H_{0}$ & $v$ & $u_r$  \\
		\noalign{\smallskip}\hline\noalign{\smallskip}
		$\Cmc_{6}$ &
		$\PSL_3(7)$ &
		$3^2.\Q_8$ &
		$2^{2}{\cdot}7^3{\cdot}19$ &
		$3$
		\\
		$\Cmc_{6}$ &
		$\PSL_4(5)$ &
		$2^4.\A_{6}$ &
		$5^{5}{\cdot} 13{\cdot} 31$ &
		$2$
		\\
		$\Smc$ &
		$\PSL_{3}(4)$ &
		$\A_{6}$ &
		$2^{3}{\cdot} 7$ &
		$5$
		\\
		$\Smc$ &
		$\PSL_{4}(7)$ &
		$\PSU_{4}(2)$ &
		$2^{3}{\cdot} 5{\cdot} 7^{6}{\cdot} 19$ &
		$3$
		\\
		\noalign{\smallskip}\hline\noalign{\smallskip}
	\end{tabular}
\end{table}

\begin{proposition}\label{prop:psl}
	Let $\Dmc$ be a nontrivial $2$-design with $\gcd(r, \lambda)=1$. Suppose that $G$ is an automorphism group of $\Dmc$ of almost simple type with socle $X=\PSL_{n}(q)$ with $ n\geq 3$ and $(n,q)\neq (3,2)$ and $(4,2)$. If $G$ is flag-transitive and $H=G_{\alpha}$ with $\alpha$ a point of $\Dmc$, then $H\cap X\cong \,^{\hat{}}[q^{n-1}]{:}\SL_{n-1}(q){\cdot} (q-1)$ is a parabolic subgroup, $v=(q^{n}-1)/(q-1)$ and $r$ divides $(q^{n}-q)/(q-1)$. In particular, if $\lambda =1$, then $\Dmc$ is the Desarguesian plane. Moreover, if $\Dmc$ is symmetric, then $\Dmc$ is the projective space $\PG_{n-1}(q)$.
\end{proposition}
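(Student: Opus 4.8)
The plan is to pin down $H$ by combining the flag-transitivity constraints of Lemma~\ref{lem:six} with Aschbacher's Theorem~\cite{a:Aschbacher-84} and the subdegree data of Lemma~\ref{lem:subdeg-PSp}. Since $H$ is maximal in $G$, it lies in one of the geometric families $\Cmc_1,\dots,\Cmc_8$ or in $\Smc$, and $v=|X|/|H\cap X|$ by~\eqref{eq:v}. I would first separate the parabolic from the non-parabolic case. A maximal $H$ that is not parabolic lies in no proper parabolic subgroup, so Lemma~\ref{lem:Tits} gives $p\mid v$; then $\gcd(p,v-1)=1$, and since $r\mid v-1$ by Lemma~\ref{lem:six}(a) we get $\gcd(r,p)=1$, whence $r\leq|H|_{p'}$. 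Feeding $v<r^2$ (Lemma~\ref{lem:six}(c)) into this is exactly the hypothesis of Corollary~\ref{cor:large-2}, so $|X|<|\Out(X)|_{p'}^2\,|H\cap X|\,|H\cap X|_{p'}^2$.

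This inequality organises the non-parabolic candidates. For $H\notin\Cmc_5\cup\Cmc_6$, either $|H\cap X|_p<|\Out(X)|$, in which case $H$ is one of the types (a.i)--(a.vi) of Lemma~\ref{lem:coprime}, or $|H\cap X|_p\geq|\Out(X)|$, in which case the displayed bound collapses to $|X|<|H\cap X|^3$ and $H$ is read off from the large-subgroup tables of~\cite{a:AB-Large-15}; the subfield ($\Cmc_5$) and extraspecial-type ($\Cmc_6$) families omitted from Lemma~\ref{lem:coprime}, together with the sporadic $\Smc$-subgroups, contribute only the small groups of Table~\ref{tbl:psl-c6-s}. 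Each candidate is then tested using that $r$ divides $|H|$ and every nontrivial subdegree of Table~\ref{tbl:subdeg-PSp} (Lemma~\ref{lem:six}(c),(d)), while $v<r^2$ forces $r>\sqrt v$. For most families this is decisive on sight---for $\GL_1(q)\wr\S_n$ the subdegree $2n(n-1)(q-1)$ is polynomial in $q$ whereas $v$ grows like $q^{n^2-n}$; for the Singer-type $\GL_1(q^n)$ one has $r\leq|H|_{p'}$ of size about $n(q^n-1)/(q-1)$ against $v$ of size about $q^{n^2-n}/n$, so $v>r^2$ once $q^{(n-1)(n-2)}>n^3$; and for the subfield subgroups the subdegree $(q_0^n-1)(q_0^{n-1}-1)$ is far below $\sqrt v$ when $n\geq5$---and the finitely many residual configurations over small fields are settled by direct computation with \textsf{GAP}~\cite{GAP4}.

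Once $H\cap X=P_i$ is parabolic, the symmetry $i\mapsto n-i$ lets me assume $2\leq i\leq n/2$ and aim for a contradiction. Here $v=\Binom{n}{i}_q>q^{i(n-i)}$, while the nearest-neighbour subdegree $d_1=q(q^i-1)(q^{n-i}-1)/(q-1)^2$ of Table~\ref{tbl:subdeg-PSp} satisfies $d_1<4q^{n-1}$; since $r\mid d_1$ gives $r^2\leq d_1^2$, the bound $v<r^2$ forces $v<d_1^2$. A short estimate shows $d_1^2<v$, and hence a contradiction, for all $i\geq4$ and for $i=3$ with $n\geq8$, so only $i=2$ (for every $n$) and the borderline pair $i=3,\ n\in\{6,7\}$ remain, where $v$ and $d_1^2$ have the same order of magnitude. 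I expect this $i=2$ family to be the main obstacle. For it I would use that $d_1$ has $p$-part exactly $q$, so the $p$-part of $r$ is at most $q$, and then combine $r\mid d_1$ with $r\mid v-1$ (and with the second subdegree $d_2=q^4(q^{n-2}-1)(q^{n-3}-1)/((q^2-1)(q-1))$) to show $r\mid\gcd(v-1,d_1)<\sqrt v$; the finitely many small exceptions are checked directly.

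This leaves $i\in\{1,n-1\}$, that is $H\cap X\cong\,^{\hat{}}[q^{n-1}]{:}\SL_{n-1}(q){\cdot}(q-1)$, so $v=(q^n-1)/(q-1)$ and, by Lemma~\ref{lem:six}(a), $r\mid v-1=(q^n-q)/(q-1)$. The two special conclusions then follow from standard classifications. When $\lambda=1$ the structure $\Dmc$ is a flag-transitive linear space on the points of $\PG_{n-1}(q)$, and~\cite{a:Saxl2002} identifies it as the projective space, namely the Desarguesian plane in the planar case $n=3$. When $\Dmc$ is symmetric we have $r=k$, and since $P_1$ acts $2$-transitively on points, $\Dmc$ is a point-$2$-transitive symmetric design with socle $\PSL_n(q)$; Kantor's classification~\cite{a:Kantor-85-2-trans} then forces $\Dmc=\PG_{n-1}(q)$.
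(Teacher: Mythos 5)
Your overall architecture matches the paper's: Aschbacher's theorem plus Corollary~\ref{cor:large-2}, Lemma~\ref{lem:coprime} and the large-subgroup tables of \cite{a:AB-Large-15} to generate candidates, subdegree divisibility from Lemma~\ref{lem:subdeg-PSp} together with $\lambda v<r^2$ to kill the non-parabolic ones, and the reduction of the parabolic case to $P_1$ followed by \cite{a:Saxl2002} and \cite{a:Kantor-85-2-trans}. The genuine gap is exactly where you anticipated trouble: the $P_2$ case. Your proposed resolution --- show $r\mid\gcd(v-1,d_1)<\sqrt v$ --- is false for $n$ odd. Set $f(q)=q(q^{n-2}-1)/(q-1)$. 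One checks that $v-1=f(q)\,d(q)$ with $d(q)=q^{n-2}+q^{n-4}+\cdots+q^3+q+1$ when $n$ is odd, so $f(q)$ divides $v-1$ exactly; since $f(q)$ also divides the subdegree $d_1=q(q+1)(q^{n-2}-1)/(q-1)$, you get $\gcd(v-1,d_1)\geq f(q)\sim q^{n-1}$, whereas $\sqrt v\sim q^{n-2}$. Thus $r=f(q)$ (and more generally $r=cf(q)/m$ for small $m$, with $c=\gcd(q+1,(n-3)/2)$) survives both the subdegree test and $\lambda v<r^2$ for \emph{every} odd $n$ and every $q$; this is an infinite family, not a finite list of small exceptions.

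To close it the paper needs substantially more: in the symmetric subcase it invokes the classification of primitive rank~$3$ groups on symmetric designs \cite{a:Dempwolff2001}; in the nonsymmetric subcase it writes $r=cf(q)/m$, uses $k<r$ to bound $m^2\lambda$, and crucially imports from \cite{a:Liebeck1985} that $h(q)=(q^n-1)/(q-1)$ divides $b=vr/k$, which after explicit polynomial gcd manipulations (expressing $q\,g(q)$ and $q^2f_1(q)$ in terms of $d(q)$) forces $m\lambda d(q)+c$ to divide a quantity of degree roughly $5$ in $q$, yielding $q^{n-5}<3(n-3)^6/4$ and hence finitely many $(n,q)$, which are then eliminated by hand. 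Without the divisibility of $b$ by $h(q)$ (or some equally strong substitute) your argument cannot terminate. Everything else in your proposal is sound in outline, though you should note that several non-parabolic families (e.g.\ the $\Cmc_8$ orthogonal and unitary types) are not dispatched ``on sight'' by crude size estimates but require the parity argument via Lemmas~\ref{lem:divisible} and~\ref{lem:odd-deg}, and that the $\Cmc_1$ case also includes the non-parabolic stabilisers of complementary subspace pairs when $G$ contains a graph automorphism.
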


\begin{proof}
	Suppose that $X=\PSL_n(q)$ with $n\geq3$ and $q=p^a$. We note here that the almost simple groups with socle $\PSL_{3}(2)\cong \PSL_{2}(7)$ and $\PSL_{4}(2)\cong \A_{8}$ have been treated in Proposition~\ref{prop:psl2} and \cite{a:Zhou-CP-nonsym-alt}. Therefore, we will exclude these cases in our arguments below. Let now $H_{0}=H\cap X$, where $H=G_{\alpha}$ with $\alpha$ a point of $\Dmc$. Since $H$ is maximal in $G$, by Aschbacher's Theorem~\cite{a:Aschbacher-84}, $H$ belongs to a collection $\Cmc_i$ or $\Smc$, for some $i=1, \ldots,8$. If $H$ is neither a $\Cmc_6$-subgroup, nor a $\Smc$-subgroup, then  Lemma~\ref{lem:coprime}(a) gives the list of possible subgroups $H$ satisfying $|H_0|_p<|\Out(X)|$. On the other hand, if $H$ is not a parabolic subgroup satisfying $|H_0|_p\geq |\Out(X)|$, then since $|G|<|H|\cdot|H|^2_{p'}$ by Corollary~\ref{cor:large-2}, we conclude that $|X|\leq |H\cap X|^{3}$, and
	the possibilities for such $H$ are recorded in~\cite[Theorem 7 and Proposition 4.7]{a:AB-Large-15}. In conclusion, we have one of the following possibilities:
	
	\begin{enumerate}[\rm (1)]
		\item $H \in \Cmc_{1}\cup\Cmc_{6}\cup\Cmc_{8}\cup \Smc$;
		\item $H$ is a $\Cmc_2$-subgroup of type $\GL_{m}(q)\wr\S_t$ with $m=1$ or $t=2,3$, where $m=n/t$;
		\item $H$ is a $\Cmc_3$-subgroup of type $\GL_{m}(q^t)$ with $m=1$ or $t=2,3$, where $m=n/t$;
		\item $H$ is a $\Cmc_5$-subgroup of type $\GL_{n}(q_{0})$ with $q=q_{0}^{t}$ and $t=2, 3$.
	\end{enumerate}
	In what follows, we analyse each of these possible cases.\smallskip
	
	\noindent\textbf{(1)} Let $H$ be a $\Cmc_{1}$-subgroup. In this case $H$ is reducible, that is, $H \cong P_{i}$ stabilises a subspace of $V$ of dimension $i$ with $1\leq  i \leq n/2$ or $G$ contains a graph automorphism and $H$ stabilises a pair $\{U,W\}$ of subspaces of dimension $i$ and $n-i$ with $i<n/2$.\smallskip
	
	\noindent{(a)}\quad  Suppose first that $H\cong P_{i}$ for some $1\leq  i \leq n/2$. Then by~\cite[Proposition 4.1.17]{b:KL-90}, we have that
	$H_{0}\cong \,^{\hat{}}[q^{i(n-i)}]{:}\SL_{i}(q){\times }\SL_{n-i}(q){\cdot}(q-1)$.
	Thus by \cite[Corollary~1]{a:Korableva-LnUn} , we observe that
	\begin{align}\label{eq:psl-c1}
		v=\frac{(q^n-1)\cdots(q^{n-i+2}-1)(q^{n-i+1}-1)}{(q^i-1)(q^{i-1}-1)\cdots(q^2-1)(q-1)}>q^{i(n-i)}.
	\end{align}
	Let $H=P_1$. Note that the group $G$ is $2$-transitive in this action and $v=(q^{n}-1)/(q-1)$. Moreover, $r$ divides $v-1=(q^{n}-q)/(q-1)$ which is also the  nontrivial subdegree of $X$. In particular, if $\lambda =1$, then $\Dmc$ is the  Desarguesian plane. Moreover, if $\Dmc$ is symmetric, then by \cite{a:Kantor-85-2-trans} , $\Dmc$ is the projective space $\PG_{n-1}(q)$.
	
	Suppose $i>1$. Then by Lemmas~\ref{lem:subdeg-PSp} and~\ref{lem:six}(d), we see that $r$ divides the subdegree $d$  which is $q(q^i-1)(q^{n-i}-1)/(q-1)^2$. This implies that $r<q^{n-1}$ if $q\neq2$, and $r<2^{n+1}$ if $q=2$. Since $v>q^{i(n-i)}$, it follows from the fact $\lambda v< r^2$ that $i=2$ or $(i,n)\in(3,6)$ if $q\neq 2$, and $i=2$ or $(i,n)\in \{(3,6),(3,7),(3,8),(3,9),(4,8)\}$ if $q=2$. Let $i\geq3$. We now apply Lemma~\ref{lem:six}(a) and (d) which says that $r$ divides $\gcd(v-1,d)$. Thus we can find a lower bound $l_v$ for $v$ as in the fourth column of Table~\ref{tbl:psl-c1-lv-ur} and an upper bound $u_r$ for $r$ as in the fifth column of Table~\ref{tbl:psl-c1-lv-ur}, and then we easily observe that $\lambda v\geq l_{v}>u_{r}^2\geq r^{2}$, which is a contradiction.
	\begin{table}
		\caption{Parameters $l_v$ and $u_r$ in Proposition~\ref{prop:psl}.}\label{tbl:psl-c1-lv-ur}
		\begin{tabular}{cllll}
			\noalign{\smallskip}\hline\noalign{\smallskip}
			Class & $i$ & $n$ & $l_v$ & $u_r$  \\
			\noalign{\smallskip}\hline\noalign{\smallskip}
			$\Cmc_{1}$ &
			$3$ &
			$6$ &
			$q^9$ &
			$q$
			\\
			$\Cmc_{1}$ &
			$3$ &
			$6$ &
			$2^9$ &
			$2$
			\\
			$\Cmc_{1}$ &
			$3$ &
			$7$ &
			$2^{12}$ &
			$2{\cdot}5$
			\\
			$\Cmc_{1}$ &
			$3$ &
			$8$ &
			$2^{15}$ &
			$2{\cdot}31$
			\\
			$\Cmc_{1}$ &
			$3$ &
			$9$ &
			$2^{18}$ &
			$2{\cdot}3$
			\\
			$\Cmc_{1}$ &
			$4$ &
			$8$ &
			$2^{16}$ &
			$2$
			\\
			\noalign{\smallskip}\hline\noalign{\smallskip}
		\end{tabular}
	\end{table}
	
	Therefore, $i=2$. In this case, $G$ is rank 3. If $\Dmc$ is symmetric, then the possibilities for $\Dmc$ can be read off from \cite{a:Dempwolff2001}, but we have no example with condition $\gcd(k,\lambda)=1$. In what follows, we assume that $\Dmc$ is nonsymmetric, that is to say, $v<b$, or equivalently, $k<r$. Here $v=(q^n-1)(q^{n-1}-1)/(q^2-1)(q-1)$, and the nontrivial subdegrees of $G$ listed in Table~\ref{tbl:subdeg-PSp}.
	So Lemma~\ref{lem:six}(d) implies that $r$ divides
	\begin{align}\label{eq:psl-para-subdeg}
		\frac{q(q^{n-2}-1)}{(q-1)}{\cdot}\gcd \left(q+1,\frac{(q^{n-3}-1)}{(q^2-1)}\right).
	\end{align}
	
	If $n$ is even, then by~\eqref{eq:psl-para-subdeg}, we have that
	\begin{align*}
		r \text{ divides } &\, \frac{q(q^{n-2}-1)}{(q-1)} {\cdot}\gcd \left(q+1,\frac{(q^{n-4}+ \cdots+q+1)}{(q+1)}\right),
	\end{align*}
	and so $r$ divides $q(q^{n-2}-1){\cdot}\gcd((q+1)^2,(q^{n-4}+ \cdots+q+1)/(q^{2}-1)$.
	Since $\gcd(q+1,q^{n-4}+ \cdots+q+1)=1$, $r$ must divide $q(q^{n-2}-1)/(q^2-1)$.
	We now apply Lemma~\ref{lem:six}(c) and conclude that $q^{2n-4}<v<r^2<4q^{2n-6}$, which is impossible.
	
	Therefore, $n$ is odd. Here by~\eqref{eq:psl-para-subdeg}, $r$ divides $cf(q)$, where
	$c=\gcd\left(q+1,(n-3)/2\right)$ and  $f(q)=q(q^{n-2}-1)/(q-1)$.
	Assume first that $n=5$. Then $v=(q^2+1)(q^4+q^3+q^2+q+1)$, and so $r=q(q^2+q+1)/m$, for some positive integer $m$. But the inequality $\lambda v<r^2$ does not hold for $\lambda \geq 2$.
	Assume now $n\geq7$. Let
	\begin{align*}
		h(q)=& \frac{q^{n}-1}{q-1}=q^{n-1}+q^{n-2}+\cdots+q+1,\\
		g(q)= &\frac{q^{n-1}-1}{q^2-1}=q^{n-3}+q^{n-5}+\cdots+q^2+1 ,\\
		f_1(q)= & \frac{q^{n-2}-1}{q-1}=q^{n-3}+q^{n-4}+\cdots+q+1\\
	\end{align*}
	and
	\begin{align}\label{eq:psl-para-dq}
		d(q)=& \frac{v-1}{f(q)}=\frac{q^n+q^2-q-1}{q^2-1}=q^{n-2}+q^{n-4}+\cdots+q^3+q+1.
	\end{align}
	Thus $v=g(q){\cdot}h(q)$ and $f(q)=q\cdot f_{1}(q)$. Since $r$ divides $cf(q)$, there exists $m$ such that $mr=cf(q)$. Hence $r=cf(q)/m$. Since $r^2>v$, it follows  that
	\begin{align}\label{eq:para-psl-m}
		m\leq 2q.
	\end{align}
	Moreover, by Lemma~\ref{lem:six}(a) and \eqref{eq:psl-para-dq}, we have that
	\begin{align*}
		k=\frac{m\lambda (v-1)}{cf(q)}+1=\frac{m\lambda d(q)+c}{c}.
	\end{align*}
	Since $k< r$, we have that $m\lambda d(q)/c<cf(q)/m$, and this yields
	\begin{align*}
		m^2\lambda <c^2\frac{f(q)}{d(q)}= & \, c^2\frac{q(q^{n-2}-1)(q^2-1)}{(q-1)(q^n+q^2-q-1)}
		=  \, c^2\frac{q(q^{n-2}-1)(q+1)}{q^n+q^2-q-1} \\
		< & \, c^2\frac{q(q^{n-2}-1)(q+1)}{q^n}
		= c^2\frac{(q^{n-2}-1)(q+1)}{q^{n-1}}.
	\end{align*}
	So $m^2 \lambda<c^2(q+1)/q$. If $m\geq1$, then $\lambda<c^2(q+1)/ q$.
	Note by~\cite{a:Liebeck1985},
	\begin{align*}
		h(q)=\frac{q^n-1}{q-1}\, \text{ divides } \, b=\frac{vr}{k}=\frac{c^2q {\cdot}f_1(q){\cdot}g(q){\cdot}h(q)}{m(m\lambda d+c)}.
	\end{align*}
	Therefore, $m\lambda d+c$ must divide $m \lambda c^2q{\cdot}f_1(q){\cdot}g(q)$. We first note that $q{\cdot}g(q)=q^{n-2}+q^{n-4}+\cdots+q^3+q=d(q)-1$. From this we observe that
	\begin{align}\label{eq:psl-para-1}
		\gcd(m\lambda d(q)+c,m\lambda q{\cdot}g(q))=\gcd(m\lambda d(q)+c,m\lambda +c).
	\end{align}
	We next obtain
	$q^2f_1(q)=(q+1)d(q)-2q-1$. Then
	\begin{align*}
		m \lambda q^2f_1(q)= &  m \lambda (q+1)d(q)-2 m \lambda q- m \lambda  \\
		= & (q+1)( m \lambda d(q)+c)-c(q+1)-2 m \lambda q- m \lambda  \\
		= & (q+1)( m \lambda d(q)+c)-q(2 m \lambda +c)- m \lambda -c.
	\end{align*}
	Thus
	\begin{align}\label{eq:psl-para-2}
		\gcd(m\lambda d(q)+c,m\lambda q^2f_1(q))=\gcd(m\lambda d(q)+c,q(2m\lambda +c)+m\lambda +c).
	\end{align}
	Therefore,~\eqref{eq:psl-para-1} and~\eqref{eq:psl-para-2} yield
	$m\lambda d(q)+c$  divides $c^2(m\lambda +c)[(2m\lambda +c)q+m\lambda +c]$.
	Since  $\lambda<c^2(q+1)/q$ and by~\eqref{eq:para-psl-m}, we have $1\leq m \lambda <2c^2(q+1)$.
	Then we get
	\begin{align}\label{eq:psl-para-divid}
		d(q)+c<c^2[2c^2(q+1)+c][\left( 4c^2(q+1)+c \right) q+ 2c^2(q+1)+c].
	\end{align}
	We now apply the fact that $c\leq c^2$ for the last inequality and so
	\begin{align*}
		d(q)  & <  c^6(8q^3+26q^2+27q+9)=c^6(q+1)(2q+3)(4q+3).
	\end{align*}
	From~\eqref{eq:psl-para-dq}  and since $q^2-q-1>0$, we have $q^n/(q^2-1)<c^6(q+1)(2q+3)(4q+3)$.
	Note that $q+1<2q$, $2q+3<4q$ and $4q+3<6q$. Then $q^n<c^6(q^2-1)(q+1)(2q+3)(4q+3)<2^4{\cdot}3{\cdot}c^6q^5$. Since $c\leq (n-3)/2$, we conclude that  $q^{n-5}<3(n-3)^6/4$.
	This inequality holds only when $n=7$ and $q\leq 53$, $n=9$ and $q\leq 13$,  $n=11$ and $q\leq 7$, $n=13$ and $q\leq 5$, $n=15$ and $q\leq 4$,  $n=17,19$ and $q\leq 3$, or $21\leq n\leq 33$ and $q\leq 2$. But then again applying~\eqref{eq:psl-para-divid}, we obtain $(n,q)=(7,q)$ with $q=2,3,5,7,9,11,13,17,19,23$, or $(n,q)\in\{(9,2),(9,5),(9,8),(11,3),(13,4),(15,2)\}$.  These remaining cases can be easily ruled out, as there are no parameters $(v,b,r,k,\lambda)$  satisfying the conditions $r(k-1)=\lambda(v-1)$ and $bk=vr$.
	\smallskip
	
	\noindent{(b)}  Suppose now that $H$ stabilises a pair $\{U,W\}$ of subspaces of dimension $i$ and $n-i$ with $i<n/2$. Assume first that $U\subset W$. Here by Lemma~\ref{lem:subdeg}, there is a subdegree which is a power of $p$.
	On the other hand, the highest power of $p$ dividing $v-1$ is $q$ if $p$ is odd, it is $2q$ if $q>2$ is even, and it is at most $2^{n-1}$ if $q=2$. Hence $r^2<v$, which is a contradiction.
	Assume now that $U\cap W=0$. Then by~\cite[Proposition 4.1.4]{b:KL-90}, $H_{0}\cong (\,^{\hat{}}\SL_{i}(q)\times \SL_{n-i}(q)){:}(q-1)$. Note by~\cite[Lemma 4.2 and Corollary 4.3]{a:AB-Large-15} and~\eqref{eq:v} that $v>q^{2i(n-i)}$. Also
	by  Lemma~\ref{lem:Tits}, $p$ divides $v$, and so $r$ is coprime to $p$ by Lemma~\ref{lem:six}(a). If $i=1$, then Lemmas~\ref{lem:subdeg-PSp} and~\ref{lem:six}(d) imply that $r$ divides $(q^{n-1}-1)/(q-1)$, whereas $\gcd(r,p)=1$. Note that $(q^{n-1}-1)/(q-1)<2q^{n-2}$. Thus  $r<2q^{n-2}$. We apply Lemma~\ref{lem:six}(c) and deduce that $q^{2n-2}\leq \lambda v<r^2<4q^{2n-4}$, that is to say, $q<2$, which is impossible. Therefore $i>1$. According to~\cite[p. 339-340]{a:Saxl2002}, there is a subdegree of $G$ with the $p'$-part dividing $(q^i-1)(q^{n-i}-1)$, and so $r<2q^n$. Since $n>2i$, it follows that $v> q^{2i(n-i)}>4q^{2n}>r^2$, which is a contradiction.\smallskip
	
	Let $H$ be a $\Cmc_6$-subgroup. Then by~\cite[Propositions 4.6.5 and 4.6.6]{b:KL-90} and the inequality $|G|<|H|{\cdot}|H|^2_{p'}$,
	we need only to consider the pairs $(X,H_0)$ listed in Table~\ref{tbl:psl-c6-s}. For
	each such $H_0$, by~\eqref{eq:v}, we obtain $v$ as in the fourth column of Table~\ref{tbl:psl-c6-s}. Moreover,
	Lemma~\ref{lem:six}(a)-(c) says that $r$ divides $\gcd(|H|,v-1)$, and so we can find an upper bound
	$u_r$ of $r$ as in the fifth column of Table~\ref{tbl:psl-c6-s}. Then the inequality $\lambda v<r^2$ rules out
	these two possibilities.
	
	Let now $H$ be a $\Cmc_{8}$-subgroup. In this case $H$ is a classical group. Then by~\cite[Propositions 4.8.3, 4.8.4 and 4.8.5]{b:KL-90}, $H_{0}$ is isomorphic to one of the following groups:
	\begin{enumerate}[\quad(a)]
		\item $\,^{\hat{}}\Sp_{n}(q){\cdot} \gcd(n/2, q-1)$ with $n=2m\geq 4$;
		\item $\,^{\hat{}}\SO_{n}^{\e}(q)$ with $q$ odd, and ${n=2m+1\geq3}$  if $\e=\circ$ and ${n=2m\geq4}$ if $\e=\pm$;
		\item $\,^{\hat{}}\SU_{n}(q_0){\cdot} \gcd(n, q_0-1)$ with $q=q_0^2$ and $n\geq 3$.
	\end{enumerate}
	In these three cases, we apply Lemma~\ref{lem:divisible}, and as $r$ is coprime to $p$, we conclude that $r$ is divisible by a parabolic index in $H_0$.\smallskip
	
	\noindent{(a)} Here $H_0$ is a symplectic group, with $n=2m\geq4$.
	Note by \cite[Lemma 5]{a:Korableva-Sp} that the index of a parabolic subgroup of $\,^{\hat{}}\Sp_{n}(q)$ is
	\begin{align}\label{eq:psp-para-index}
		\frac{(q^{n}-1)(q^{n-2}-1)\cdots (q^{n-2i+2}-1)}{(q^i-1)(q^{i-1}-1) \cdots (q-1)},
	\end{align}
	where $i\leq n/2$. If $n=4$, then by~\eqref{eq:v}, we have $v=q^2(q^3-1)/\gcd(2,q-1)$. Also by~\eqref{eq:psp-para-index} and Lemma~\ref{lem:divisible}, we see that $(q^4-1)/(q-1)$ divides $r$, and so $r$ is divisible by $q^2+1$, but $\gcd(v-1,q^2+1)$ divides $\gcd(q^2+1,q)$ or $\gcd(q^2+1,q-1)$. Hence $\gcd(v-1,q^2+1)\leq2$, which is a contradiction. If $n=6$, then again by~\eqref{eq:v}, $v=q^6(q^5-1)(q^3-1)/\gcd(3,q-1)$ and from ~\eqref{eq:psp-para-index} and Lemma~\ref{lem:divisible}, $q^3+1$ divides $r$. Now Lemma~\ref{lem:six}(a) implies that $q^3+1$ divides $v-1$, where $v-1=q^{14}-q^{11}-q^{9}+q^{6}-1$ or $v-1=(q^{14}-q^{11}-q^{9}+q^{6}-3)/3$. In the former case, since $\gcd(v-1,q^{3}+1)$ divides $2q^{2}+1$ and $q^{3}+1$ divides $v-1$, we conclude that $q^{3}+1$ divides $2q^{2}+1$. This yields $q^{3}\leq 2q^{2}$,
	whence $q=2$, and so $v=2^6{\cdot}7{\cdot}31$ and $r=9$,  and hence $r$ is too small to satisfy $v<r^2$, which is a contradiction.
	In the latter case where $v-1=(q^{14}-q^{11}-q^{9}+q^{6}-3)/3$, by the same manner as the previous case, we obtain no possible parameters. Thus $n\geq8$. Here by~\cite[Lemma 4.2 and Corollary 4.3]{a:AB-Large-15} and~\eqref{eq:v}, we have that $v>q^{(n^2-n-6)/2}$. It follows from Lemmas~\ref{lem:subdeg-PSp} and \ref{lem:six}(d) that $r$ divides the odd part of $(q^n-1)(q^{n-2}-1)$, and certainly $r$ divides $(q^n-1)(q^{n-2}-1)/(q-1)^2$ as $r$ is coprime to $q-1$, and since $(q^n-1)/(q-1)<2q^{n-2}$, we have that $r<4q^{2n-4}$. Recall that $v>q^{(n^2-n-6)/2}$. Then  the inequality $\lambda v<r^2$ forces $n^{2}-9n-6<0$, and since $n$ is even we get $n=8$.
	Then again by~\eqref{eq:v}, $v=q^{12}(q^7-1)(q^5-1)(q^3-1)/\gcd(4,q-1)$. Hence Lemma~\ref{lem:six}(c) implies that $\lambda q^{27}/32\leq\lambda v<r^2<16q^{24}$. This yields $\lambda q^3<2^9$, whence $q\leq 5$. For these values of $q$, since $r\leq \gcd(v-1,(q^8-1)(q^6-1)/(q-1)^2)$, it follows that $r$ is at most $9$, $5$, $1$  or $3$, respectively for $q=2$, $3$, $4$ or $5$. These cases can be ruled out by Lemma~\ref{lem:six}(c).\smallskip
	
	\noindent{(b)} In this case, $H_{0}$ is of orthogonal type with $q$ is odd and $v$ is even. By the fact that $\gcd(r,v)=1$, we deduce that $r$ is odd.
	If $n=4$ and $H_0$ is of type $\O_4^{+}$, then by~\eqref{eq:v}, $v=q^4(q^3-1)(q^2+1)>q^9/2$. It follows from Lemma~\ref{lem:six}(a) and (c) that $r$ divides $a(q^2-1)^2$, and since both $q$ and $r$ are odd, we conclude that $r$ divides $a(q^2-1)^2/16$. Hence $r^2<q^9/256<q^9/2<v$, which is a contradiction.
	In the remaining cases, Lemma~\ref{lem:divisible} implies that $r$ is divisible by a parabolic index in $H$, and so Lemma~\ref{lem:odd-deg} yields $r$ is even, which is impossible.\smallskip
	
	\noindent{(c)} Here $H_0$ is of unitary type, and by~\cite[Lemma 4.2 and Corollary 4.3]{a:AB-Large-15} and~\eqref{eq:v}, we have $v>q_0^{n^2-4}$.
	If $n\geq4$, then we apply Lemmas~\ref{lem:subdeg-PSp} and~\ref{lem:six}(d) and conclude that $r$ divides
	\begin{align}\label{eq:psl-c8-unitary}
		(q_0^n-(-1)^n)(q_0^{n-1}-(-1)^{n-1}).
	\end{align}
	Hence $r<2q_0^{2n-1}$. It follows from Lemma~\ref{lem:six}(c) that $q_0^{n^2-4}\leq\lambda v<r^2<4q_0^{4n-2}$, and so $q_{0}^{n^2-4n-2}<4$, that is to say, $n^{2}-4n-4<0$, whence $n=4$. Then by~\eqref{eq:v},  $v=q_0^6(q_0^4+1)(q_0^3-1)(q_0^2+1)/\gcd(q_0-1,4)$. Again by \eqref{eq:psl-c8-unitary}, $r$ divides $(q_0^4-1)(q_0^3+1)$ and by Lemma~\ref{lem:six}(a), $r$ is coprime to $(q_0^2+1)(q_0-1)$.
	From this, we see that $r$ divides $(q_0^3+1)(q_0+1)$, and this yields $\lambda v\geq r^2$, a contradiction. Therefore, $n=3$ and $v=q_0^3(q_0^3-1)(q_0^2+1)/\gcd(q_0-1,3)$. Here the index of the parabolic subgroup of $H$  is $(q_0^3+1)$, so Lemma~\ref{lem:divisible} implies that $r$ is divisible by $q_0^2-q_0+1$. Since $r$ divides $v-1$, $q_0^2-q_0+1$ must divide $q_0^8+q_0^6-q_0^5-q_0^3-1$, and this forces $q_0=2$. Hence $v=280$ and $r\leq \gcd(v-1,|H|)=9$, again contrary to $\lambda v<r^2$.\smallskip
	
	Let finally $H$ be a $\Smc$-subgroup. Then by~\cite[Theorem 4.2 and Corollary 4.3]{a:Liebeck1985}, and so either $|H|<q^{2n+4}$, or
	$X$ and $H_0$ are as in \cite[Table 4]{a:Liebeck1985}. In the former case, if $|H|<q^{2n+4}$, then Corollary~\ref{cor:large} and~\cite[Corollary 4.3]{a:AB-Large-15} imply that $q^{n^2-2}<|G|<|H|^3<q^{6n+12}$, and this yields $n^{2}-6n-14<0$,
	so $n\leq7$. Thus for $n\leq7$, the subgroups $H$ are listed in~\cite[Chapter 8]{b:BHR-Max-Low}. Since $|G|<|H|{\cdot}|H|^2_{p'}$, we only need to consider the pairs $(X,H_0)$ listed in Table~\ref{tbl:psl-c6-s}. For each such $H_0$, as before, the inequality $\lambda v<r^2$ does not hold.
	In the latter case, $n$ is $d(d-1)/2$, $27$, $16$, or $11$ and $H_0$ is $\PSL_d(q)$, $\E_6(q)$, $\POm^+_{10}(q)$ or $\M_{24}$, respectively. For the case $n=d(d-1)/2$ by~\cite[Theorem 4.1]{a:Liebeck1985}, $|H|<q^{3n}$.  As $q^{n^2-2}<|G|<|H|^3<q^{9n}$, we have $n\leq9$, and so the inequality $3\leq d(d-1)/2\leq 9$ implies that $d=3$ or $4$, and so $n=3$ or $6$, respectively, but this gives no  possible parameters. The remaining cases also can be ruled out by Corollary~\ref{cor:large}.\smallskip

	\noindent\textbf{(2)} Let $H$ be  a $\Cmc_2$-subgroup of type $\GL_{m}(q)\wr\S_t$ with $m=1$ or $t=2,3$, where $m=n/t$. Then by~\cite[Proposition 4.2.9]{b:KL-90}, we have that $H_{0}\cong \,^{\hat{}}\SL_{m}(q)^t{\cdot} (q-1)^{t-1}{\cdot} \S_{t}$ with $n=mt$.
	It follows from~\eqref{eq:v} and Lemma~\ref{lem:equ} that $v>q^{n(n-m)}/(t!)$.
	If $m=1$, then $n=t$ and by Lemmas~\ref{lem:subdeg-PSp} and~\ref{lem:six}(d), $r$ divides $2n(n-1)(q-1)$. Lemma~\ref{lem:six}(c) implies that $q^{n(n-1)}/(n!)\leq\lambda v<r^2\leq 4n^2(n-1)^2(q-1)^2$, and so $q^{n(n-1)}/(n!)< 4n^2(n-1)^2(q-1)^2$. This inequality holds only for $(n,q)\in \{ (3,2),(3,3),(3,4),(4,2) \}$, and considering the fact that $(n,q)\neq (3,2)$ and $(4,2)$, we have $(n,q)=(3,3)$ or $(3,4)$. Note that the latter case can be  ruled out, as $v$ is not integer.
	For the first case, we obtain $v=234$, and  this is impossible since $r$ divides $\gcd(v-1,2n(n-1)(q-1))=\gcd(233,24)=1$.
	If  $m>1$, then again by Lemma~\ref{lem:subdeg-PSp}, the parameter $r$ divides $t(t-1)(q^m-1)^2/(q-1)$, and so $r<2q^{2m-1}t^2$. It follows from Lemma~\ref{lem:six}(c) that $q^{n(n-m)}/(t!)\leq\lambda v<r^2<4q^{4m-2}t^4$, and so  $q^{n(n-m)-4m+2}<4t^4(t!)$ for $t=2,3$ and $n=mt$. This inequality is true for  $n=4$ and $q\in \{2,3,4,5,7,8,9,11\}$, but for these cases, we cannot find any possible parameters by Lemma~\ref{lem:six}. \smallskip
	
	\noindent\textbf{(3)} Let $H$ be  a $\Cmc_3$-subgroup of type $\GL_{m}(q^t)$ with $m=1$ or $t=2,3$, where $m=n/t$. Then by~\cite[Proposition 4.3.6]{b:KL-90}, we have that $H_{0}\cong \,^{\hat{}}\SL_{m}(q^t){\cdot} (q^t-1)(q-1)^{-1}{\cdot} t$ with $n=mt$.
	We apply Corollary~\ref{cor:large-2}, and conclude that $|X|< |\Out(X)|^2{\cdot} |H_{0}| {\cdot} |H_{0}|_{p'}^{2}$, and since $2a\leq q$ and $\gcd(n,q-1)\leq q-1$, we also know that $|\Out(X)|\leq q(q-1)$. By~\cite[Lemma 4.2 and Corollary 4.3]{a:AB-Large-15}, we have that
	\begin{align*}
		q^{m^2t^2-2}<t^{3}q^{m^2t+3}(1-q^{-2t}){\cdot}\prod_{i=2}^{m}(q^{it}-1)^{2}(q^t-1)^{2}(q-1)^{-2}.
	\end{align*}
	Then Lemma~\ref{lem:equ}(b) implies that $q^{mt(mt-2m-1)-3}/(1-q^{-2t})<t^{3}$, and so
	\begin{align}\label{eq:psl-c3}
		q^{mt(mt-2m-1)+2t-3}<t^{3}(q^{2t}-1).
	\end{align}
	
	Let first $t=3$. Then \eqref{eq:psl-c3} implies that $q^{m^{2}-m-1}<3$, and so $m\in\{1,2\}$. If $m=2$, then by \eqref{eq:psl-c3}, we must have $q=2$, and so $v=1904640$, and $r$ divides $\gcd(v-1, |H|)=1$, which is a contradiction. Therefore $m=1$, and hence $n=3$. Thus  ${H_0\cong \,^{\hat{}}(q^2+q+1){\cdot}3<\PSL_3(q)=X}$. In this case, $v=q^3(q^2-1)(q-1)/3$ is even, and so by Lemma~\ref{lem:six}(a), $r$ is odd. By the fact that $r$ divides both $v-1$ and $|H|$, we deduce that $r$ divides $3a(q^2+q+1)$.  It follows from Lemma~\ref{lem:six}(c) that $q^3(q^2-1)(q-1)/3 \leq \lambda v<r^2\leq9a^2(q^2+q+1)^2$, and so $q^3(q^2-1)(q-1) <27a^2(q^2+q+1)^2$.
	This inequality holds only for $q \in \{3,4,5,8,9,16\}$ as $(n,q)\neq(3,2)$.
	For each such values of $q$, we obtain $v$ and an upper bound $u_r=\gcd(v-1,|H|)$ of $r$ as below
	\begin{align*}
		(v,u_r,q)\in \{ & (144,13,3),(960,7,4),(4000,93,5),(75264,73,8),(155520,91,9)\\
		&(5222400,91,16)\}.
	\end{align*}
	Hence, we observe that the condition $v<r^2$ does not hold except for $q=3,5$. In these two cases, we apply Lemma~\ref{lem:six}(a)-(d) and conclude that $r=13$ and $\lambda=1$ if $q=3$, and $r=93$ and $\lambda=1,2$ if $q=5$. Since $\lambda \geq 2$, we only consider the case where $q=5$ and $\lambda=2$, but here there is no  possible parameters satisfying Lemma~\ref{lem:six}(a) and (b).
	
	Let now $t=2$. Then $n$ is even, and so $H_{0}\cong \,^{\hat{}}\SL_{m}(q^2){\cdot} (q+1){\cdot}2$. By~\eqref{eq:v}, $v>q^{n(n-m)}/2$ with $m=n/2$.  Note by  Lemma~\ref{lem:Tits} that $p$ divides $v$, and so Lemma~\ref{lem:six}(a) implies that $r$ is coprime to $p$. Suppose first that $n\geq 8$. It follows from Lemmas~\ref{lem:subdeg-PSp} and~\ref{lem:six}(d) that $r<q^{2n-2}$. Lemma~\ref{lem:six}(c) implies that
	$q^{n^2/2}/2<v<r^2<q^{4n-4}$, and this yields $q^{n^2-8n+8}<4$, which is impossible.
	Let now $n=6$. Then $v=q^9(q^5-1)(q^3-1)(q-1)/2$. As $v$ is even, we deduce that the parameter $r$ must be odd. Lemmas~\ref{lem:New} and~\ref{lem:six}(c) yield $r$ divides $a(q+1)(q^4-1)(q^6-1)$. But since $v$ is divisible by $q-1$ and $q^3-1$, and since $\gcd(v-1,q+1)$ divides $3$, we conclude that $r$ must divide $9a(q^2+1)(q^3+1)$. Thus the inequality $\lambda v<r^2$ yields $q^{18}/2\leq \lambda v<r^2\leq9^2a^2(q^2+1)^2(q^3+1)^2$, then $q^{18}<2\cdot 9^2a^2(q^2+1)^2(q^3+1)^2$, and so $q=2$. Then $v=55552$, and so $r\leq \gcd(v-1,|H|)=3$, and this contradicts Lemma~\ref{lem:six}(c). Hence $n=4$. Then by~\eqref{eq:v}, $v=q^4(q^3-1)(q-1)/2$ is even. Since $\gcd(r,v)=1$, we have $r$ is odd and coprime to $q-1$. Here $|H_0|=2q^2(q-1)(q+1)^2(q^2+1)/\gcd(4,q-1)$. So by Lemma~\ref{lem:six}(a) and (c) and the fact that $\gcd(v-1, q+1)=1$, we deduce that $r$ is also coprime to $q+1$. Thus $r$ divides $a(q^2+1)$.  Again, applying Lemma~\ref{lem:six}(c), we conclude that $q^8/2\leq \lambda v<r^2<q^6/4$, which is impossible.\smallskip
	
	\noindent\textbf{(4)} Let $H$ be  a $\Cmc_5$-subgroup of type $\GL_{n}(q_{0})$ with $q=q_{0}^{t}$ and $t=2, 3$. Then by \cite[Proposition 4.5.3]{b:KL-90}, we see that $H_{0}\cong\,^{\hat{}} \SL_{n}(q_{0}){\cdot} \gcd (n, (q-1)/(q_{0}-1))$ with $q=q_{0}^{t}$ and $t=2,3$.
	
	Let first  $t=3$. We apply Corollary~\ref{cor:large-2}, Lemma~\ref{lem:equ}(b), \cite[Corollary 4.3]{a:AB-Large-15} and the fact that $a^2\leq 2q$, and by the same argument as in case (3), we deduce that $q_0^{n^2-n-6}/(1-q_0^{-2})<8n^5$, or equivalently,
	\begin{align}\label{eq:psl-c5}
		q_0^{n^2-n-4}<8n^5(q_0^{2}-1).
	\end{align}
	This inequality implies that $n=3$, $4$ or $5$. If $n=5$, then by~\eqref{eq:psl-c5}, we must have $q_0=2$, and so $v=2^{20}{\cdot}3^2{\cdot}7^3{\cdot}13{\cdot}73{\cdot}151$, and $r$ divides $\gcd(v-1, |H|)=1$, which  contradicts Lemma~\ref{lem:six}(c). If $n=4$, then again by~\eqref{eq:psl-c5},  we have $q_0=\{2,3,4\}$.
	Thus $v =2^{12}{\cdot}3^2{\cdot}7^2{\cdot}13{\cdot}73$, $3^{12}{\cdot}7^2{\cdot}13^2{\cdot}73{\cdot}757 $ or $2^{24}{\cdot}3^3{\cdot}7^2{\cdot}13^2{\cdot}19{\cdot}73{\cdot}241$, and
	$r$ is at most $1$, $20$ or $1$, respectively. But these cases can be ruled out by Lemma~\ref{lem:six}(c).
	If $n=3$, then by \eqref{eq:v}, we have that $v=q_0^6(q_0^6+q_0^3+1)(q_0^4+q_0^2+1)$.
	Note by Lemma~\ref{lem:Tits} that $r$ is coprime to $p$. It follows from Lemmas~\ref{lem:New} and \ref{lem:six}(c) that $r$ divides $2a(q_0^2-1)(q_0^3-1)$.
	Now by Lemma~\ref{lem:six}(c), and the fact that $q\geq2a$, we have $q_0^{16}<v<r^2<q_0^{12}$, which is impossible.

	Let now $t=2$. If $n=3$, then $v=q_0^3(q_0^3+1)(q_0^2+1)/\gcd(q_0+1,3)$. Since $r$ is coprime to $p$, we apply Lemma~\ref{lem:divisible} to $H_0$ and this together with~\eqref{eq:psl-c1} implies that $r$ is divisible by $q_0^2+q_0+1$. Thus by Lemma~\ref{lem:six}(a), we conclude that $q_0^2+q_0+1$ must divide $v-1$, and so $q_0^2+q_0+1$ divides $2q_0+\gcd(q_0+1,3)$, and therefore
	$q_0=2$. Thus $v=120$. Also the inequality $q_0^2+q_0+1\leq r\leq \gcd(v-1,|H|)$ yields $r=7$, and so $r$ is too small to satisfy $\lambda v<r^2$. If $n=4$, then $v=q_0^6(q_0^4+1)(q_0^3+1)(q_0^2+1)/\gcd(q_0+1,4)$ and  again by Lemma~\ref{lem:divisible} and using~\eqref{eq:psl-c1}, we see that $q_0^2+1$ divides $r$, but $q_0^2+1$ also divides $v$, which contradicts the fact that $\gcd(r,v)=1$. Therefore, $n\geq5$. Here by~\eqref{eq:v} and \cite[Corollary 4.3]{a:AB-Large-15}, we have that $v>q_0^{n^2-2}$. It follows from Lemmas~\ref{lem:subdeg-PSp} and~\ref{lem:six}(c) that $r<q_0^{2n-1}$, and so $v>r^{2}$, which is a contradiction.
\end{proof}

\begin{proposition}\label{prop:psl-new}
	Let $\Dmc$ be a nontrivial $2$-$((q^{n}-1)/(q-1),k,\lambda)$ design with $r\mid (q^{n}-q)/(q-1)$ and $\gcd(r, \lambda)=1$ admitting flag-transitive almost simple automorphism group $G$ with socle $X=\PSL_{n}(q)$ for $n\geq 3$. Then one of the following holds:
	\begin{enumerate}[\rm (a)]
		\item $\Dmc$ is a projective space $\PG_{n-1}(q)$ as in {\rm Example~\ref{ex:proj-space}};
		\item $\Dmc=(\Pmc,\Bmc)$ is a  $2$-$((q^{n}-1)/(q-1),q,q-1)$ with $\gcd(n-1,q-1)=1$, where $\Pmc$ is the point set of $\PG_{n-1}(q)$ and $\Bmc=(B\setminus\{\alpha\})^{G}$ with $B$ a line in $\PG_{n-1}(q)$ and $\alpha$ any point in $B$.
	\end{enumerate}
\end{proposition}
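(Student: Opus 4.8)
The plan is to exploit the structural conclusion of Proposition~\ref{prop:psl}: here $H=G_\alpha$ is the parabolic $P_1$ stabilising a point $\alpha$ of $\PG_{n-1}(q)$, so $G$ acts on $\Pmc$ as the natural $2$-transitive group on the points of $\PG_{n-1}(q)$, with $v=(q^n-1)/(q-1)$ and $r\mid (q^n-q)/(q-1)=qw$, where $w=(q^{n-1}-1)/(q-1)$ is the number of lines through $\alpha$. Since $\gcd(q,w)=1$, the $p$-part of $v-1=qw$ is exactly $q$, so writing $r=r_p\,r_{p'}$ we have $r_p\mid q$ and $r_{p'}\mid w$, while $r(k-1)=\lambda(v-1)$ by Lemma~\ref{lem:six}(a). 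The engine of the proof is the unipotent radical $Q$ of $P_1$: it is elementary abelian of order $q^{n-1}$, fixes $\alpha$, and its orbits on the remaining $qw$ points are precisely the $w$ punctured lines $\ell\setminus\{\alpha\}$ through $\alpha$, each of size $q$. As $Q\trianglelefteq G_\alpha$ and $G_\alpha$ is transitive on the $r$ blocks through $\alpha$, all $Q$-orbits on these blocks share a common size which is a power of $p$ dividing $r$, hence dividing $q$.

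I would first treat the case where $Q$ fixes the base block $B$. Then $B\setminus\{\alpha\}$ is a union of $Q$-orbits, hence a union of punctured lines through $\alpha$, so $B$ is a union of lines through $\alpha$. By flag-transitivity the flags $(\alpha,B)$ and $(\beta,B)$ are $G$-equivalent for every $\beta\in B$, so some $g\in G_B$ sends $\alpha$ to $\beta$; applying $g$ shows $B$ is also a union of lines through $\beta$. Consequently $\langle\beta,\gamma\rangle\subseteq B$ for all $\beta,\gamma\in B$, i.e. $B$ is a projective subspace, say of dimension $d$ with $k=(q^{d+1}-1)/(q-1)$ and $1\leq d\leq n-2$. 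The blocks are then all $d$-subspaces, so $r=\binom{n-1}{d}_q$, and the divisibility $r\mid qw=q\binom{n-1}{1}_q$ together with a Gaussian-binomial estimate (in the spirit of Lemma~\ref{lem:equ}) forces $d=n-2$ or $d=1$. In either case $\Dmc$ is a projective-geometry design, giving the projective space of Example~\ref{ex:proj-space} (the line case $d=1$ having $\lambda=1$, as already recorded in Proposition~\ref{prop:psl}); this is conclusion (a).

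Next I would treat the case where $Q$ does not fix $B$, so $p\mid r$ and $B\setminus\{\alpha\}$ is a union of orbits of the proper subgroup $Q_B=Q\cap G_B$. Analysing the $Q_B$-action on each punctured line through $\alpha$ (each being a regular torsor for $Q/K_\ell$, with $K_\ell$ the kernel of $Q$ on $\ell$) shows that on almost every line through $\alpha$ the block meets it either fully or not at all, while on a small distinguished family of lines it can meet the line in a proper nonempty subset. A counting argument, applying flag-transitivity at each point of $B$, is then used to rule out any full line, forcing $B$ to be contained in a single line $\ell_0$, so $k\leq q$ and $B$ is a proper subset of $\ell_0\cong\PG_1(q)$. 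Restricting $\Dmc$ to $\ell_0$ produces a flag-transitive $2$-design on $q+1$ points, admitting the induced action of the line stabiliser (with socle $\PSL_2(q)$) and inheriting the coprimality $\gcd(r,\lambda)=1$, so Proposition~\ref{prop:psl2} applies. If this restricted design is nontrivial it is one of the finitely many designs listed there, and these are eliminated by comparing the induced group on $\ell_0$ (whose socle is $\PSL_2(q)$ for the correct $q$) with the automorphism group of each candidate; the only survivor is the trivial restriction $k=q$, i.e. $B=\ell_0\setminus\{x\}$, giving $k=q$, $\lambda=q-1$ and $r=qw$. Finally $\gcd(r,\lambda)=\gcd(qw,q-1)=\gcd(w,q-1)=\gcd(n-1,q-1)$, so the hypothesis $\gcd(r,\lambda)=1$ is equivalent to $\gcd(n-1,q-1)=1$, which is conclusion (b).

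I expect the main obstacle to lie entirely in the case where $Q$ moves $B$: both the reduction showing that the block is confined to a single line (ruling out full lines through $\alpha$ for general $q=p^a$, where $Q_B$ may have intermediate index in $Q$ and the ``distinguished family'' of lines is harder to control) and the elimination of the sporadic restricted designs with $q\in\{5,7,9,\dots\}$ coming from Proposition~\ref{prop:psl2}, which is precisely where a \textsf{GAP} computation is likely to be needed. The case where $Q$ fixes $B$, by contrast, is clean once the ``union of lines through every point'' observation and the divisibility $r\mid qw$ are in hand.
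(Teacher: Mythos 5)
Your first case, where the unipotent radical $Q$ of $G_{\alpha}\cap X$ fixes the block, is correct and is a genuinely more geometric route than the paper's treatment of the corresponding situation (the paper handles $p\nmid r$ by forcing a Sylow $p$-subgroup into $G_{B_{1}}$, exploiting $bk=vr$ and then the possible overgroups of $[q^{n-1}]{:}\SL_{n-1}(q)$): your observation that a $Q$-invariant block is a union of lines through each of its points, hence a $d$-subspace with $r=\Binom{n-1}{d}_{q}$ dividing $q(q^{n-1}-1)/(q-1)$, forces $d\in\{1,n-2\}$ cleanly and is a nice alternative.

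The second case is where the real content of the proposition lies, and there your argument has a genuine gap. The assertion that ``a counting argument\dots{} rules out any full line, forcing $B$ to be contained in a single line'' is precisely the hard step, and nothing you write supplies it. If $|Q:Q_{B}|=p^{t}$, then $B$ meets a punctured line $\ell\setminus\{\alpha\}$ in a union of $Q_{B}$-orbits of size $|Q_{B}:Q_{B}\cap K_{\ell}|$, and for $t>1$ the set of lines on which this size is a proper divisor of $q$ is neither small nor controlled, so a priori $B$ can meet many lines through $\alpha$ in proper nonempty pieces. The paper earns the confinement to a line in several stages: it first shows $W_{B_{1}}$ lies in a maximal parabolic of $W=\SL_{n-1}(q)$, deduces $r=p^{s}(q^{n-1}-1)/(q-1)$ and $(k-1)/\lambda=q/p^{s}$, then eliminates $p^{s}<q$ by showing $G_{B_{1}}$ would be a geometric subgroup whose index is divisible by $q$, forcing $q\mid r$ after all; only with $r=v-1$ and $k=\lambda+1$ in hand does it place $B_{1}$ inside $\Fix(S_{\alpha_{1},\alpha_{2}})$ for $n=3$, and for $n\geq4$ it still needs the Kleidman--Liebeck determination of the overgroups of $[q^{2(n-2)}]{:}\SL_{n-2}(q)$ together with the subgroup structure of $\PGL_{2}(q)$ acting on the line to pin down $k=q$. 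Your sketch replaces all of this with an unnamed count.

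A second, smaller but still fatal, defect in the same case: eliminating the nontrivial restricted designs on $\ell_{0}$ ``by comparing the induced group on $\ell_{0}$ with the automorphism group of each candidate'' cannot work for the one candidate that survives the coprimality sieve, namely $(q,\lambda)=(7,3)$ with $n=3$. The $2$-$(8,4,3)$ design in line $3$ of Table~\ref{tbl:main} genuinely admits $\PSL_{2}(7)$ flag-transitively, so the local comparison on the line eliminates nothing; one must pass to the global structure, where a putative $2$-$(57,4,3)$ design would force $\PSL_{3}(7)$ or $\PSL_{3}(7){\cdot}3$ to have a subgroup of index $798$ with an orbit of length $4$, and the paper's \textsf{GAP} computation shows none exists. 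You anticipate that a computation is needed, but you locate it at the wrong step and with the wrong target group.
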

\begin{proof}
	The case where $\lambda=1$ is treated in \cite{a:Saxl2002} in which case part (a) follows. In what follows, we assume that $\lambda\geq 2$ and $(n,q)\neq (3,2)$ and $(4,2)$.
	
	We first observe that the action of $X$ is flag-transitive and $2$-transitive on points. Let $(\alpha_{1},B_{1})$ be a flag of $\Dmc$. Then $(\alpha_{1}, B_{1})^{G}$ is partitioned in $B_{1}$-orbits of the same length, as $X\unlhd G$. Thus, $\Dmc_{X} =(\Pmc,B_{1}^{X})$ is a flag- transitive $2$-$(v,k,\lambda')$ design with $v=(q^{n}-1)/(q-1)$ as $X$ is $2$-transitive on $\Pmc$. The flag transitivity of
	$G$ on $\Dmc$ and $X\unlhd G$ imply $r'=\lambda'(v-1)/(k-1)$ divides $r$, and so $\lambda'$ divides $\lambda$. Therefore, $\gcd(r',\lambda')=1$ as  $\gcd(r,\lambda)=1$. Thus, we may assume that $G=X$ and $\Dmc=\Dmc_{X}$.
	
	Assume that $p$ divides $r$. Then $\lambda$ is coprime to $p$. If $\alpha_{1}$ is a point of $\Dmc$, then $G_{\alpha_{1}}=[q^{n-1}]:W\cdot (q-1)$, where $W=\SL_{n-1}(q)$, and there is a Sylow
	$p$-subgroup $S$ of $G$ lying in $G_{\alpha_{1}}$. Let $\alpha_{2}$ be any point of $\Dmc$ distinct from
	$\alpha_{1}$. 
	Then $S_{\alpha_{1},\alpha_{2}}$ fixes at least one of the $\lambda$ blocks containing $\alpha_{1}$ and $\alpha_{2}$, say $B_{1}$.
	Moreover, $|S : S_{\alpha_{1},\alpha_{2}}|$ divides $q$, and $S_{\alpha_{1},\alpha_{2}}$ contains a Sylow $p$-subgroup of $W$. Thus either $W$ fixes $B_{1}$, or $W_{B_{1}}$ lies in a maximal parabolic subgroup of $W$. The former case implies that $\langle S_{\alpha_{1},\alpha_{2}},W \rangle\leq G_{\alpha_{1},B_{1}}$, and so $r$ divides $q\cdot \gcd(n-1,q-1)$, and hence $r\leq q(q-1)$, but this violates $\lambda v<r^{2}$ for $\lambda \geq 2$. In the latter case where $W_{B_{1}}$ lies is a maximal parabolic subgroup of $W$, we have that $|B_{1}^{W}|=e\Binom{n-1}{h}_{q}$ with $e\geq 1$ and $1\leq h\leq (n-1)/2$. Since $r$ divides $(q^{n}-q)/(q-1)$ and $W_1:=[q^{n-1}]:W$ is normal in $G_{\alpha_{1}}$, we conclude that $B_{1}^{G_{\alpha_{1}}}$ is partitioned in $W_1$-orbits of length $p^{t}e\Binom{n-1}{h}_{q}$. Therefore, $h=1$ and $r=p^{s}(q^{n-1}-1)/(q-1)$ with $s\geq t>0$, and hence $(k-1)/\lambda=q/p^{s}$. If $p^{s}<q$, then $\gcd(k,p)=1$, and so $p^{s}$ divides $b=|G:G_{B_{1}}|$. It follows from $[q^{2(n-2)}]:\SL_{n-2}(q)\leq G_{\alpha_{1},B_{1}}\leq G_{B_{1}}$ that $G_{B_{1}}$ is a geometric subgroup by \cite[Theorem~5.2.4]{b:KL-90}, and hence by inspecting maximal subgroups of $G$ whose index are divisible by $p$ \cite[Chapter 4]{b:KL-90}, we conclude that  $q$ must divide $b$, and consequently $q$ divides $r$, whereas $p^{s}<q$ is the largest power of $p$ dividing $r$. Therefore, $r=v-1=(q^{n}-q)/(q-1)$ and $k=\lambda+1$. 
	
	If $n=3$, then $B_{1}\subset B$ where $B=\Fix(S_{\alpha_{1},\alpha_{2}})$ is a line of $\PG_{2}(q)$. So $\Dmc_{0}=(B,B_{1}^{G_{B}})$ is a $2$-$(q+1,\lambda+1,\lambda)$ design with $r_{0}=q$, admitting $\PSL_{2}(q)\unlhd G_{B}^{B}$ as a flag-transitive automorphism group. Moreover, $\gcd(r,\lambda)=1$ implies that $\gcd(r_{0},\lambda)=1$. If $\Dmc_{0}$ is nontrivial, then we apply Proposition~\ref{prop:psl2} and conclude that $(q,\lambda)\in\{(5,2),(7,3),(9,5)\}$. However, only $(q,\lambda)=(7,3)$ occurs as $\gcd(r,\lambda)=1$, and hence $\Dmc$ is a $2$-$(57,4,3)$ design and $G$ is $\PSL_{3}(7)$ or $\PSL_{3}(7)\cdot 3$. In this case, $b=798$, and it is easy to check by \textsf{GAP} that $G$ has two conjugacy classes of subgroups of index $798$, but none of these subgroups has an orbit of length equal to $4$, and so this case cannot occur. If $\Dmc_{0}$ is trivial, then $k=q$ and $\lambda=q-1$. Therefore, $B_{1}=B\setminus \{\alpha\}$ with $\alpha \in B$. Moreover, $q$ is even as $\gcd(r,\lambda)=1$, and we obtain part (b). 
	
	If $n\geq 4$, then $G_{\alpha_{1},\alpha_{2},B_{1}}=G_{\alpha_{1},B_{1}}=[q^{2(n-2)}]:(\SL_{n-2}(q)\cdot(q-1))\cdot(q-1)$, and so $G_{\alpha_{1},B_{1}}\leq G_{B_{1}}\leq G_{B}$, where $B$ is either a line or a hyperplane of $\PG_{n-1}(q)$ by \cite[Chapter 4 and Theorem~5.2.4]{b:KL-90}. The latter case can be ruled out by \cite[Propositins~4.1.4 and 4.1.22]{b:KL-90} as $G_{\alpha_{1},B_{1}}\leq G_{\alpha_{1},B}$. Therefore, $B$ is a line in $\PG_{n-1}(q)$ and $G_{\alpha_{1},\alpha_{2},B_{1}}\leq G_{B}$ implies that $\alpha_{1},\alpha_{2}\in B_{1}$. Therefore, $k=|G_{B_{1}}:G_{\alpha_{1},B_{1}}|=|G_{B_{1}}^{B}:G_{\alpha_{1},B_{1}}^{B}|$. Since $G_{\alpha_{1},B_{1}}^{B}$ is isomorphic to the cyclic group of order $q-1$ and $G_{\alpha_{1},B_{1}}^{B}\leq G_{B_{1}}^{B}\leq G_{B}^{B}\cong \PGL_{2}(q)$, we derive that $G_{B_{1}}^{B}\cong q:(q-1)$, $\PGL_{2}(q)$ or $\S_{4}$ and $q=5$, as $k> 2$. Therefore, $k\in\{q,q(q+1)\}$ or $(k,q)=(6,5)$, respectively. The latter case cannot occur as $k=6$ implies that $\lambda=5$ divides $r$, which is a contradiction. If $k=q(q+1)$, then $G_{B_{1}}=G_{B}$, and so $B\subset B_{1}$, but it is impossible as $\alpha_{1},\alpha_{2}\in B_{1}\cap B$. Thus $k=q$ and $\lambda=q-1$. Moreover, $\gcd(n-1,q-1)=1$ as $\gcd(r,\lambda)=1$. Since $\alpha_{1},\alpha_{2} \in B_{1}$, $k=q$ and each nontrivial orbit of $S_{\alpha_{1},\alpha_{2}}$ is of length divisible by $q$, we conclude that $B_{1}\subset \Fix(S_{\alpha_{1},\alpha_{2}})=B$, where $B$ is a line in $\PG_{n-1}(q)$, and hence $B_{1}=B\setminus \{\alpha\}$ with $\alpha\in B$. This follows part (b).
	
	Assume that $r$ is coprime to $p$. So are $b$ and $k$  as $bk=vr$ and $v=(q^{n}-1)/(q-1)$. Thus $G_{B_{1}}$ contains a Sylow $p$-subgroup $S$ of $G$ lying in a maximal
	parabolic subgroup of $G$. 
	
	Therefore, $\Binom{n}{h}_{q}$ divides $b$, where $1\leq h\leq n/2$. Since $k$ is coprime to $p$, $S$ has its fixed point $\alpha_{1}$ of $\Dmc$ contained in $B_{1}$ and any other point $S$-orbit is of length divisible by $q$. Thus $k\geq q+1$. Since now $bk=vr$, it follows that
	\begin{align*}
	\Binom{n}{h}_{q}k \text{ divides } \left(\frac{q^{n}-1}{q-1}\right)\left(\frac{q^{n-1}-1}{q-1}\right)=\Binom{n}{2}_{q}(q+1),
	\end{align*} 
	and hence $h\in\{1,2\}$. If $h=2$, then $|G:G_{B_{1}}|\geq (q^{n}-1)(q^{n-1}-1)/[(q-1)^{2}]$, and so $k\leq q+1$. Thus $k=q+1$, $r=(q^{n-1}-1)/(q-1)$, and hence $\lambda=1$, which is not our case. Therefore, $h=1$, and hence $G_{B_{1}}\leq P$, where $P=[q^{n-1}]:\SL_{n-1}(q)\cdot(q-1)$. 
	
	Let $G_{B_{1}}$ do not contain $[q^{n-1}]:\SL_{n-1}(q)$. If $(n-1,q)\neq (4,2)$, then $|P:G_{B_{1}}|\geq (q^{n-1}-1)/(q-1)$ by \cite[Table 5.2.A]{b:KL-90} as $n>3$ and $G_{B_{1}}$ contains a Sylow $p$-subgroup of $G$. So $|G:G_{B_1}|\geq (q^{n}-1)(q^{n-1}-1)/[(q-1)^{2}]$, which is a contradiction. 
	If $(n-1,q)=(4,2)$, then $X=\PSL_{5}(2)$, $v=31$ and $r\mid 30$. Since $\gcd(r,\lambda)=1$, it follows that $(v,b,r,k,\lambda)$ is $(31, 31, 10, 10, 3)$, 
	$(31, 93, 15, 5, 2)$,
	or $(31, 31, 15, 15, 7)$. Since $X$ has no subgroup of index $93$ and the subgroups of index $31$ has no orbit of length $10$, the first two possibilities do not occur, and so $\Dmc$ is a $2$-design with parameters $(31,15,7)$.  Hence $\Dmc$ is $\PG_{4}(q)$ as in part (a) by \cite{a:ABD-PrimeLam} and \cite[Table~II.1.35]{b:Handbook}. 
	Let $G_{B_{1}}$ contain $[q^{n-1}]:\SL_{n-1}(q)$. Then $[q^{n-1}]:\SL_{n-1}(q)\leq G_{B_{1}}\leq P$ and $b=b_{1}\cdot(q^{n}-1)/(q-1)$, where $b_{1}\mid (q-1)$. If $\SL_{n-1}(q)\leq G_{\alpha_{1},B_{1}}$, then $[q^{n-1}]:\SL_{n-1}(q)\leq G_{\alpha_{1},B_{1}}$, and so $k$ divides $q-1$, which is a contradiction as it violates $bk = vr$ for $n>3$. Thus, $|\alpha_{1}^{\SL_{n-1}(q)}|>1$, and hence $k\geq (q^{n-1}-1)/(q-1)$ implying that $b_{1}=1$, that is to say, $v=b$. Therefore, $\Dmc$ is symmetric, and hence Proposition~\ref{prop:psl} implies that $\Dmc=\PG_{n-1}(q)$, which is part (a).
\end{proof}

\begin{proposition}\label{prop:psp}
	Let $\Dmc$ be a nontrivial $2$-design with $\gcd(r, \lambda)=1$. Suppose that $G$ is an automorphism group of $\Dmc$ of almost simple type with socle $X$. If $G$ is flag-transitive, then the socle $X$ cannot be $\PSp_{2m}(q)$ with $m \geq  2$ and $(m,q) \neq (2,2)$.
\end{proposition}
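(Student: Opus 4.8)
The plan is to follow the now-standard reduction. By Proposition~\ref{prop:flag} the group $G$ is point-primitive, so $H=G_{\alpha}$ is maximal in $G$ and $v=|X|/|H\cap X|$ by \eqref{eq:v}. Corollary~\ref{cor:large} forces $|X|\leq|H\cap X|^{3}$, so $H$ is a \emph{large} maximal subgroup and must appear in the lists of \cite{a:AB-Large-15}; by Aschbacher's theorem it lies in one of the families $\Cmc_{1},\ldots,\Cmc_{8}$ or $\Smc$. I would split the analysis into the parabolic and the non-parabolic cases, and in each case bound the replication number $r$ from above using subdegrees (Lemma~\ref{lem:six}(d)) and bound $v$ from below using the order formulas and Lemma~\ref{lem:equ}, aiming in every case to contradict the inequality $\lambda v<r^{2}$ of Lemma~\ref{lem:six}(c) by showing $v>r^{2}$.

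The parabolic case is where $\PSp_{2m}(q)$ differs essentially from $\PSL_{n}(q)$: since $\PSp_{2m}(q)$ is not among the groups excluded in Lemma~\ref{lem:subdeg}, the action of $X$ on the cosets of a maximal parabolic has a nontrivial subdegree equal to a power of $p$. As $r$ divides every nontrivial subdegree, $r$ is forced to be a power of $p$; but $r$ also divides $v-1$, so $r\leq (v-1)_{p}$, which is only a small power of $q$ (for instance $r\leq q$ when $H=P_{1}$, since then $v=(q^{2m}-1)/(q-1)$ and $(v-1)_{p}=q$). Since $v$ grows like a large power of $q$ while $r$ is bounded by a fixed power of $q$, one gets $v>r^{2}$ for all $m\geq2$, a contradiction. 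This rules out every maximal parabolic subgroup, and is precisely the point at which the symplectic groups behave differently from the linear groups of Proposition~\ref{prop:psl}.

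For the non-parabolic subgroups I would first note, using the concluding remark in the proof of Lemma~\ref{lem:coprime} (that $|H\cap X|_{p}\geq|\Out(X)|$ always holds for $\PSp_{n}(q)$ with $n\geq4$), that the coprimality test gives no reduction here, so one must rely directly on the large-subgroup classification together with the sharper bound $|X|<|\Out(X)|_{p'}^{2}\cdot|H\cap X|\cdot|H\cap X|_{p'}^{2}$ of Corollary~\ref{cor:large-2}. Because $H$ is non-parabolic, Tits' Lemma (Lemma~\ref{lem:Tits}) gives $p\mid v$ and hence $\gcd(r,p)=1$, so $r$ divides the $p'$-part of every subdegree. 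I would then run through the large non-parabolic maximal subgroups one family at a time: the $\Cmc_{1}$ non-degenerate subspace stabilisers $\Sp_{2i}(q)\perp\Sp_{2(m-i)}(q)$, the $\Cmc_{2}$ subgroups of type $\Sp_{2i}(q)\wr\S_{t}$ and $\GL_{m}(q){\cdot}2$, the $\Cmc_{3}$ extension-field subgroups of type $\Sp_{2i}(q^{2})$, the $\Cmc_{8}$ orthogonal subgroups $\GO_{2m}^{\e}(q)$ (arising only when $q$ is even), and finally the $\Smc$-subgroups. For each, the subdegree $d$ recorded in Table~\ref{tbl:subdeg-PSp}, or obtained from \cite[Propositions 1 and 2]{a:LPS2} and \cite[Sections 3--7]{a:Saxl2002} for the cases not tabulated there, together with $\gcd(r,p)=1$ bounds $r$ above by the $p'$-part of $d$, which is of the order of $q^{2m-1}$ or smaller, whereas \eqref{eq:v} and Lemma~\ref{lem:equ} give $v$ of much larger order; the inequality $v>r^{2}$ then eliminates each candidate.

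The hard part will be the small-rank and small-field configurations, where the order estimates are weakest. In particular, the case $X=\PSp_{4}(q)$ (and to a lesser extent $\PSp_{6}(q)$) requires care: the imprimitive and extension-field subgroups $\Sp_{2}(q)\wr\S_{2}$ and $\Sp_{2}(q^{2})$ have several distinct subdegrees, listed separately in Table~\ref{tbl:subdeg-PSp} according to the parity of $q$, so $r$ must be tested against each of them and the resulting inequalities $\lambda v<r^{2}$ examined individually. The $\Cmc_{8}$ orthogonal subgroups in even characteristic and the $\Smc$-subgroups (handled via the order bound of \cite{a:Liebeck1985}, and for the finitely many genuinely small groups by explicit computation in \textsf{GAP}) are the other places where a clean asymptotic inequality is unavailable and a short case-by-case argument is needed. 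Once these residual cases are disposed of, no parameters satisfying $r(k-1)=\lambda(v-1)$ and $bk=vr$ survive, proving that the socle $X$ cannot be $\PSp_{2m}(q)$.
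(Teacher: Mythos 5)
Your overall architecture (point-primitivity, Aschbacher plus the large-subgroup lists, killing parabolics via the $p$-power subdegree of Lemma~\ref{lem:subdeg} against $(v-1)_p=q$, and then case-by-case subdegree bounds) matches the paper's proof in outline, and the parabolic argument is correct as you state it. However, your plan rests on the claim that in every non-parabolic case an upper bound on $r$ coming from the $p'$-part of a subdegree, combined with a lower bound on $v$, yields $v>r^2$, and that ``no parameters satisfying $r(k-1)=\lambda(v-1)$ and $bk=vr$ survive.'' That is false for at least one infinite family. For $X=\PSp_4(q)$ with $H$ the $\Cmc_3$-subgroup of type $\Sp_2(q^2)$ and $q$ even, one gets $v=q^2(q^2-1)/2$ and $r=q^2+1$, and the parameter set $(v,b,r,k,\lambda)=\bigl(q^2(q^2-1)/2,\,q^2(q^2+1)/2,\,q^2+1,\,q^2-1,\,2\bigr)$ passes every arithmetic test you propose, including $\gcd(r,\lambda)=1$ and $\lambda v<r^2$. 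The paper eliminates it by observing that $r=k+\lambda$, invoking Lemma~\ref{lem:embed} to embed $\Dmc$ into a symmetric $2$-$((q^4+q^2+2)/2,\,q^2+1,\,2)$ design, and then quoting the non-existence of such biplanes (\cite{a:Assmus-1991} for $a\geq4$, \cite{a:Braic-2500-nopower} for $a=2,3$). This design-theoretic input is indispensable and entirely absent from your plan. A similar, smaller residue occurs in the $\Cmc_2$-case $\Sp_2(q)\wr\S_2$ with $q=8$, where the admissible set $(2080,8736,189,45,4)$ survives and must be killed by checking in \textsf{GAP} that $\PSL_2(8)^2{:}2$ and $\PSL_2(8)^2{:}6$ have no subgroup of index $189$.

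A second gap: several non-parabolic subgroups are \emph{not} eliminated by an upper bound on $r$ at all, because there $r^2$ comfortably exceeds $v$. For the $\Cmc_8$-subgroups $\O_{2m}^{\e}(q)$ ($q$ even, $v=q^m(q^m+\e)/2$), for the $\Cmc_3$-subgroups of unitary type ($q$ odd), and for the $\Smc$-subgroups ${}^2\!\B_2(q)$ and $\G_2(q)$, the paper instead derives a \emph{lower} bound or a divisibility constraint on $r$ from Lemma~\ref{lem:divisible} (the block stabiliser in the quasi-simple normal subgroup of $H$ lies in a parabolic, so a parabolic index of $H_0$ divides $r$), combined with Lemma~\ref{lem:odd-deg} for parity contradictions, and with $r\mid v-1$ to force tiny values of $q$. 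Your proposal never invokes Lemma~\ref{lem:divisible}, so these families would not fall to the strategy as written. You would need to add both ingredients --- the parabolic-index divisibility of $r$ inside $H$, and the quasi-residual/biplane argument --- to make the proof go through.
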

\begin{proof}
	Let $H_{0}=H \cap X$, where $H=G_{\alpha}$ for some point $\alpha$ of $\Dmc$.
	Since $H$ is maximal in $G$, by Aschbacher's Theorem~\cite{a:Aschbacher-84}, $H \in \Cmc_i \cup \Smc$ ($1\leq i\leq8$).
	By Corollary~\ref{cor:large-2}, we have
	that $|G|<|H|{\cdot}|H|^2_{p'}$ except for parabolic subgroups, and by the same argument as in the proof of Proposition~\ref{prop:psl} and using Lemma~\ref{lem:coprime} and~\cite[Theorem 7 and Proposition 4.22]{a:AB-Large-15}, we have one of the following possibilities for the subgroup $H$:\smallskip
	
	\begin{enumerate}[\rm (1)]
		\item $H \in \Cmc_{1}\cup\Cmc_{6}\cup \Cmc_{8}\cup \Smc$;
		\item $H$ is a $\Cmc_2$-subgroup of type $\Sp_{2m/t}(q)\wr \S_t$ with $t\in \{2, 3, 4, 5\}$;
		\item $H$ is a $\Cmc_2$-subgroup of type $\GL_{m}(q)$;
		\item $H$ is a $\Cmc_3$-subgroup of type $\Sp_{2m/t}(q^t)$ with $t=2, 3$ or $\GU_{m}(q)$;
		\item $H$ is a $\Cmc_5$-subgroup of type $\Sp_{2m}(q_{0})$ with $q=q_{0}^2$.
	\end{enumerate}
	We now discuss each of these possible cases separately. \smallskip
	
	\noindent\textbf{(1)}  Let $H$ be in $\Cmc_{1}$. Then $H$ is reducible, and  it is either parabolic, or stabilizer of a nonsingular subspace.
	
	Suppose first  that $H=P_{i}$, the stabilizer of a totally singular $i$-subspace of $V$, with $i \leq  m$. Then by \cite[Lemma 5]{a:Korableva-Sp}, we have that
	\begin{align*}
		v=\dfrac{(q^{2m}-1)(q^{2m-2}-1)\cdots (q^{2m-2i+2}-1)}{(q^i-1)(q^{i-1}-1) \cdots (q-1)}.
	\end{align*}
	Thus $v \equiv q+1 \pmod {pq}$ and $q$ is the highest power of $p$ dividing $v-1$. By Lemma~\ref{lem:subdeg}, there is a subdegree which is a power of $p$, and Lemma~\ref{lem:six}(d) implies that
	$r$ divides $q$. Since $m \geq  2$, we have that $q^2+q+1\leq v$ and so
	$q^2+q+1\leq v<r^2 \leq  q^2$, which is a contradiction.
	
	Suppose now  that $H=N_{2i}$, the stabilizer of a nonsingular $2i$-subspace $U$ of $V$, with $2i<m$.
	By~\cite[Proposition 4.1.3]{b:KL-90},
	$H_{0} {\cong}\,^{\hat{}}\Sp_{2i}(q){\times}\Sp_{2(m-i)}(q)$.
	It follows from~\cite[Lemma 4.2 and Corollary 4.3]{a:AB-Large-15} and~\eqref{eq:v} that $v>q^{4i(m-i)}$.
	By  Lemma~\ref{lem:Tits}, $p$ divides $v$, so $\gcd(p,v-1)=1$. Thus, according to Lemma~\ref{lem:six}(a), the parameter $r$ is coprime to $p$.
	Here by \cite[p. 327]{a:Saxl2002}, there is a $H$-orbit with the $p'$-part of its length dividing $(q^{2i}-1)(q^{2m-2i}-1)/(q^2-1)^{2}$ which is divisible by $r$.
	Therefore, $r<4q^{2m-4}$. We now apply Lemma~\ref{lem:six}(c) and conclude that $q^{4i(m-i)}<v<r^2<16q^{4m-8}$.  Since $q\geq 2$, it follows that $4i(m-i)<4m-4$, and since $m>2i$, we have that  $i^{2}-2i+1<0$, which is impossible. \smallskip
	
	Let $H$ be a $\Cmc_6$-subgroup. Then by~\cite[Proposition 4.6.9]{b:KL-90} and the inequality $|G|<|H|^3$,
	we need only to consider the pairs $(X,H_0)$ listed in Table~\ref{tbl:psp-c6-s}. For
	each such $H_0$, by~\eqref{eq:v}, we obtain $v$ as in the fourth column of Table~\ref{tbl:psp-c6-s}. Moreover,
	Lemma~\ref{lem:six}(a)-(c) implies that $r$ divides $\gcd(|H|,v-1)$, and so we can find an upper bound
	$u_r$ of $r$ as in the fifth column of Table~\ref{tbl:psp-c6-s}. Then the inequality $\lambda v<r^2$ rules out
	all these possibilities.
	
	Let now $H$ be a $\Cmc_{8}$-subgroup. Then by~\cite[Proposition 4.8.6]{b:KL-90}, we have that $H_{0}\cong\O_{2m}^{\e}(q)$ with $q$ even.
	In this case from~\eqref{eq:v},  $v=q^{m}(q^{m}+\e)/2$. It follows from Lemmas~\ref{lem:subdeg-PSp} and ~\ref{lem:six}(d) that
	the parameter $r$ divides $(q^m- \e) \cdot \gcd(q-2, q^{m-1}+ \e)$, and hence Lemma \ref{lem:divisible} implies that $r$ is divisible by the index of a parabolic subgroup in $\O_{2m}^{\e}(q)$ and it is clearly impossible.\smallskip
	
	Let finally $H$ be a $\Smc$-subgroup. If $m\leq 6$, then the subgroups $H$ are listed in~\cite[Chapter 8]{b:BHR-Max-Low}. Since $|G|<|H|{\cdot}|H|^2_{p'}$,
	we only have to consider the pairs $(X,H_0)$ listed in Table~\ref{tbl:psp-c6-s}. For each such $H_0$ except for ${}^2\!\B_{2}(q)$ and $\G_2(q)$, we have $\lambda v>r^2$, which is a contradiction. For the remaining two cases, we note that $r$ is coprime to $p$ by Lemma~\ref{lem:Tits}.
	If $H_0\cong {}^2\!\B_{2}(q)$, then $v=q^2(q^2-1)(q+1)$. Note that the index of the parabolic subgroup in ${}^2\!\B_{2}(q)$ is $q^2+1$, and so  Lemma~\ref{lem:divisible} implies that $r$ is divisible by $q^2+1$. On the other hand, $r$ divides $v-1$.
	Therefore, $q^2+1$ must divide $v-1$, and so $q=2$, which is a contradiction.
	If $H_0\cong \G_2(q)$, then $v=q^3(q^4-1)$. Since $\gcd(r,v)=1$, $r$ is coprime to $q+1$.
	On the other hand, by Lemma~\ref{lem:divisible},  $(q^6-1)/(q-1)$ divides $r$, which is a contradiction.
	Therefore, $m>6$, and then \cite[Theorem 4.2]{a:Liebeck1985} implies that $|H|<q^{4m+4}$, $H'=\A_{2m+1}$ or $\A_{2m+2}$, or $(X,H_0)$ is $(\PSp_{56}(q),\E_7(q))$ with $q$ odd. If $H_{0}\cong \E_{7}(q)$, then clearly $v>r^{2}$, which is a contradiction.
	If the former case holds, then by Corollary~\ref{cor:large} and~\cite[Corollary 4.3]{a:AB-Large-15}, we have that $q^{m(2m+1)}/4<|G|<|H|^3<q^{12(m+1)}$, and so  $q^{m(2m+1)}<4q^{12(m+1)}$. Thus $2m^{2}-11m-14<0$, and this inequality has no solution for $m>6$.
	If $H'=\A_{2m+1}$ or $\A_{2m+2}$, then $H=\S_{2m+2}$ is contained in $G=\Sp_{2m}(2)$ with $m$ even.
	Thus the inequality $2^{m(2m+1)}/4<|G|<|H|^3\leq[(2m+2)!]^3$ yields $m=2$, $4$, $6$, $8$ or $10$, and hence by the bounds of $v$ and $r$ given in Table~\ref{tbl:psp-c6-s}, $r$ is too small to satisfy $\lambda v<r^{2}$.\smallskip
	
	\begin{table}
		\caption{Some maximal subgroups of $X=\PSp_{2m}(q)$.}\label{tbl:psp-c6-s}
		\resizebox{\textwidth}{!}{
			\begin{tabular}{clllll}
				\noalign{\smallskip}\hline\noalign{\smallskip}
				Class & $X$ & $H_0$ & $l_v$ & $u_r$ & Conditions \\
				\noalign{\smallskip}\hline\noalign{\smallskip}
				$\Cmc_{6}$ &
				$\PSp_{4}(3)$ &
				$2^4.\rm{\Omega}_{4}^{-}(2)$ &
				$3^{3}$ &
				$2$
				\\
				$\Cmc_{6}$ &
				$\PSp_{4}(5)$&
				$2^{4}.\rm{\Omega}_{4}^{-}(2)$ &
				$3{\cdot} 5^{3}{\cdot} 13$ &
				$2$
				\\
				$\Cmc_{6}$ &
				$\PSp_{4}(7)$ &
				$ 2^4.\rm{\Omega}_{4}^{-}(2)$ &
				$2^{2}{\cdot} 3{\cdot} 5{\cdot} 7^{4}$ &
				$1$
				\\
				&
				&
				$ 2^5.\rm{\Omega}_{4}^{-}(2)$ &
				$2{\cdot} 3{\cdot} 5{\cdot} 7^{4}$ &
				$1$
				\\
				$\Cmc_{6}$ &
				$\PSp_{4}(11)$ &
				$ 2^4.\rm{\Omega}_{4}^{-}(2)$ &
				$3{\cdot}5{\cdot}11^4{\cdot}61$ &
				$2$
				\\
				$\Cmc_{6}$ &
				$\PSp_{8}(3)$&
				$2^{6}.\rm{\Omega}_{6}^{-}(2)$ &
				$2^{2}{\cdot} 3^{12}{\cdot} 5{\cdot} 7{\cdot} 13{\cdot} 41$ &
				$1$
				\\
				&
				&
				$2^{7}.\rm{\Omega}_{6}^{-}(2)$ &
				$2{\cdot} 3^{12}{\cdot} 5{\cdot} 7{\cdot} 13{\cdot} 41$ &
				$1$
				\\
				$\Smc$ &
				$\PSp_{4}(q)$ &
				${}^2\!\B_{2}(q)$ &
				$q^2(q^2-1)(q+1)$&
				-&
				$q=2^{a}\geq 4$
				\\
				$\Smc$ &
				$\PSp_{4}(q)$ &
				$\PSL_2(q)$ &
				$q^3(q^4-1)$&
				$a$ &
				$q\geq5$
				\\
				$\Smc$ &
				$\PSp_{4}(q)$ &
				$\A_{6}{\cdot}c$ &
				$q^4(q^2-1)(q^4-1)$ &
				$3^{2}{\cdot}{5}$ &
				$q=p\geq5$, $c\leq2$
				\\
				$\Smc$ &
				$\PSp_{6}(q)$ &
				$\G_{2}(q)$ &
				$q^3(q^4-1)$&
				- &
				$q$ even\\
				$\Smc$ &
				$\PSp_{6}(5)$&
				$\mathrm{J}_{2}$  &
				$2^{2}{\cdot} 3{\cdot} 5^{7}{\cdot} 13{\cdot} 31$ &
				$1$&
				\\
				$\Smc$ &
				$\PSp_{8}(2)$ &
				$\S_{10}$ &
				$2^{8}{\cdot} 3{\cdot} 17$ &
				$5{\cdot}7$ &
				\\
				$\Smc$ &
				$\PSp_{12}(2)$ &
				$\S_{14}$ &
				$2^{25}{\cdot} 3^{3}{\cdot} 5{\cdot} 17 {\cdot} 31$ &
				$7^2$ &
				\\
				$\Smc$ &
				$\PSp_{16}(2)$ &
				$\S_{18}$ &
				$2^{48}{\cdot} 3^{2}{\cdot} 5{\cdot} 17{\cdot} 31{\cdot} 43{\cdot} 127{\cdot} 257$ &
				$1$ &
				\\
				$\Smc$ &
				$\PSp_{20}(2)$ &
				$\S_{22}$ &
				$2^{81}{\cdot} 3^{5}{\cdot} 5^{2}{\cdot} 17{\cdot} 31^{2}{\cdot} 41{\cdot} 43{\cdot} 73{\cdot} 127{\cdot} 257$ &
				$1$ &
				\\
				\noalign{\smallskip}\hline\noalign{\smallskip}
			\end{tabular}
		}
	\end{table}
	
	\noindent\textbf{(2)} Let $H$ be  a $\Cmc_2$-subgroup of type $\Sp_{2m/t}(q)\wr\S_t$ with $t\in \{2, 3, 4, 5\}$. Then by~\cite[Proposition 4.2.10]{b:KL-90}, we have that $H_{0}\cong \,^{\hat{}}\Sp_{2i}(q)\wr \S_t$, with $it=m$.
	By Lemmas~\ref{lem:subdeg-PSp} and~\ref{lem:six}(d), we conclude that $r$ divides
	\begin{align}\label{psp(2)-subdeg}
		\frac{ t(t-1)(q^{2i}-1)^2}{2(q-1)}.
	\end{align}
	It follows from~\cite[Lemma 4.2 and Corollary 4.3]{a:AB-Large-15} and~\eqref{eq:v} that $v>q^{2i^2t(t-1)}/(t!)$. Since $v<r^2$, we have that
	$4(q-1)^2q^{2i^2t(t-1)}<(t!)v<(t!)r^2<(t!)t^2(t-1)^2(q^{2i}-1)^4$. So $q^{2i^2t(t-1)-8i+2}<(t!)t^4$, and then $q^{2t(t-1)-6}<t^{t+4}$. This inequality holds only when $t\in \{2,3\}$.
	
	Let first $t = 3$. Then by~\eqref{psp(2)-subdeg}, $r<6q^{4i-1}$. We apply Lemma~\ref{lem:six} and deduce that $q^{12i^2-8i+2}<6^3$, and so $q=2$ and $i=1$. Therefore, $X=\PSp_{6}(2)$ and $v=1120$; but then according to \cite[p. 46]{b:Atlas} $H$ is not maximal, which is a contradiction.
	
	Let now $t=2$. Then again by~\eqref{psp(2)-subdeg}, $r<2q^{4i-1}$, so Lemma \ref{lem:six}(c) implies that $q^{4i^2-8i+2}<8$. Thus $i=1,2$. If $i=2$, then the last inequality yields $q=2$, and hence $v=45696$.
	By~\eqref{psp(2)-subdeg} and Lemma~\ref{lem:six}(a) and (d), we have that $r\leq \gcd(45695,255)=5$, and this violates Lemma~\ref{lem:six}(c). Therefore, $i=1$, $X= \PSp_4(q)$ and $v=q^2(q^2+1)/2$.
	The subdegrees of X are shown in Table~\ref{tbl:subdeg-PSp}, and they are
	\begin{align*}
		&(q^2-1)(q+1),q(q^2-1)/2,\text{ and } q(q^2-1)(q-3)/2 \text{ if } q \text{ is }\text{odd }, \text{ and } \\
		&(q^2-1)(q+1), \text{ and } q(q^2-1)(q-2)/2  \text{ if } q  \text{ is } \text {even.}
	\end{align*}
	By~\eqref{psp(2)-subdeg} and the fact that $r$ divides each of these subdegrees, we conclude that $r$ divides $(q^2-1)(q+1)$. Now by Lemma~\ref{lem:six}(a), $r$ is a divisor of $(v-1)$, and so $r$ is a divisor of $\gcd ((q^2-1)(q+1), (q^2-1)(q^2+2)/2)$ dividing $3(q^2-1)$. Therefore, $r=3(q^2-1)/s$, for some  positive integer $s$.  Since $\lambda \geq  2$ and since $\lambda v<r^2$, we have that $r>(q^2-1)$, and hence $s=1,2$.
	Assume first that $s=1$. Then $r=3(q^2-1)$. If $k=r$, then $k=3(q^2-1)$, and so the condition $k(k-1)=\lambda(v-1)$ implies that $\lambda=6(3q^2-4)/(q^2+2)$. Thus $q^2+2$ divides $6(3q^2-4)=18(q^{2}+2)-60$ implying that $q^{2}+2$ must divide $60$. This is true for $q=2$ in which case we obtain the trivial design with parameters $(10,9,8)$. If $r>k$, then $k= 1+\lambda(q^2+2)/6$ and $b=vr/k=3q^2(q^4-1)/2k$.
	Now by Lemma~\ref{lem:six}(c), we have that
	\begin{align*}
		\lambda(\frac{q^2(q^2+1)}{2}) < 9(q^2-1)^2,
	\end{align*}
	and so $\lambda < {18(q^2-1)^2/q^2(q^2+1)}< 18$. It follows that $\lambda =2, \ldots ,17$.
	Then by calculation, we obtain the  parameters set $(v,b,r,k, \lambda)$ is   $(45,90,24,12,6)$,
	$(136,612,45,10,3)$,
	$(325,2340,72,10,2)$,
	$(1225,9800,144,18,2)$,
	$( 1225 , 5040 , 144 , 35 , 4 )$ or
	$( 2080, 8736, 189, 45, 4 )$
	when  $q$ is $3$, $4$, $5$, $7$, $7$ or $8$, respectively. Since $r$ has to be coprime to $\lambda$, all these cases can be ruled out except for $(2080,8736,189,45,4)$ when $q=8$. In this case, $X=\PSp_4(8)$, and so $H$ is isomorphic to $\PSL_2(8)^2{:}2$ or $\PSL_2(8)^2{:}6$, but using \textsf{GAP} \cite{GAP4}, neither of these subgroups has a subgroup of index $189$, which is a contradiction. Assume now that  $s=2$. Then $r=3/2(q^2-1)$ and again by Lemma~\ref{lem:six}(c), we have that
	\begin{align*}
		\lambda(\frac{q^2(q^2+1)}{2}) < \frac{9}{4}(q^2-1)^2.
	\end{align*}
	This yields ${\lambda < 9(q^2-1)^2/2 q^2(q^2+1)<9/2}$. Hence $\lambda =2,3,4$. Thus we only obtain  a symmetric $(45,12,3)$ design for $q=3$, but here $\gcd(r, \lambda)=3$, which is a contradiction.\smallskip
	
	\noindent\textbf{(3)}  Let $H$ be  a $\Cmc_2$-subgroup of type $\GL_{m}(q)$. Then by~\cite[Proposition 4.2.5]{b:KL-90}, we have that $H_{0}\cong\,^{\hat{}}\GL_{m}(q){\cdot} 2$  and $q$ is odd. Here by~\eqref{eq:v}, we have that
	\begin{align*}
		v= q^{m(m+1)/2}(q^{m}+1)(q^{m-1}+1)\cdots (q+1)/2>q^{m(m+1)}/2.
	\end{align*}
	Note that $r$ is coprime to $p$. We observe by~\cite[p. 327]{a:Saxl2002} that there is a $H$-orbit with the $p'$-part of its length dividing
	$2(q^m-1)$, and so Lemma~\ref{lem:six}(d) implies that $r$ divides $2(q^m-1)$. By Lemma~\ref{lem:six}(c), we must have $q^{m(m+1)}<2v<2r^2<8(q^m-1)^2$. Therefore, $q^{m^2-m}<8$, which is impossible as $m \geq  2$ and $q$ is odd. \smallskip
	
	\noindent\textbf{(4)} Let $H$ be a $\Cmc_3$-subgroup of type $\Sp_{m}(q^2)$, $\Sp_{2m/3}(q^3)$ or $\GU_{m}(q)$. Then by~\cite[Proposition 4.3.7 and 4.3.10]{b:KL-90}, $H_{0}$ is isomorphic to one of the following subgroups:
	\begin{enumerate}[(a)]
		\item $^{\hat{}}\GU_{m}(q){\cdot} 2$ with $q$ odd;
		\item $\PSp_{2i}(q^t){\cdot} t$ with $m=it$ and $t=2,3$.
	\end{enumerate}
	
	Assume first that $H_{0}\cong \,^{\hat{}}\GU_{m}(q){\cdot} 2$ with $q$ odd. Note that $v$ is even. Then by Lemma~\ref{lem:six}(a), $r$ must be odd. Also  Lemma~\ref{lem:Tits} says that  $r$ is coprime to $p$. Then by Lemma~\ref{lem:divisible}, the stabiliser of a block under $H_{0}$ is contained in a parabolic subgroup of $\GU_{m}(q)$, and so Lemma~\ref{lem:odd-deg} implies that $r$ is even. But here $v-1$ is odd, which is a contradiction.\smallskip
	
	Assume now that $H_{0}\cong\PSp_{2i}(q^t){\cdot} t$ with $m=it$ and $t=2, 3$.
	By \cite[Lemma 4.2]{a:AB-Large-15} and \eqref{eq:v}, we have that $v>q^{2i^{2}t(t-1)}/4t$, where $t=2, 3$.
	Applying Corollary~\ref{cor:large-2} and \cite[Lemma 4.2]{a:AB-Large-15}, and since $a^2\leq2q$, we have that $|\Out(X)|^2\leq8q$. Thus the inequality $|X|< |\Out(X)|^2{\cdot} |H_{0}|{\cdot} |H_{0}|_{p'}^{2}$ yields
	\begin{align*}
		\frac{1}{4} q^{2i^2t^2+it}<8t^3 \cdot q^{2i^2t+it+1} \cdot \prod_{j=1}^{i}(q^{2tj}-1)^2.
	\end{align*}
	If $t=3$, then Lemma~\ref{lem:equ}(a) implies that $q^{6i(i-1)-1}<864$. This inequality holds only for $i=1$.
	So $X=\PSp_{6}(q)$ and $H_0=\PSp_{2}(q^3){\cdot}3$ . By \eqref{eq:v}, we have that $v=q^6(q^4-1)(q^2-1)/3$. Since $q+1$ divides $v$, it follows from Lemma~\ref{lem:six}(a) that $r$ is coprime to $q+1$. On the other hand, applying Lemma~\ref{lem:divisible} to $\PSp_{2}(q^3)$, we deduce that $r$ is divisible by the index of a parabolic subgroup in $\PSp_{2}(q^3)$. Thus $q^3+1$ divides $r$, which is a contradiction.
	
	If $t=2$, then $v>q^{4i^2}/8$. By Lemmas~\ref{lem:subdeg-PSp} and~\ref{lem:six}(d), $r$ divides $a(q^{4i}-1)$. It follows from Lemma~\ref{lem:six}(c) and $a^2\leq 2q$ that $q^{4i^2}/8<v<r^{2}<2q^{8i+1}$, that is to say, $4i^{2}-8i-5<0$. Thus $i \leq  2$.
	Moreover, if $i=2$, then $v=q^8(q^6-1)(q^2-1)/2$ and $r$ divides $a(q^{8}-1)$. Note by Lemma~\ref{lem:six}(a) that $r$ is coprime to $v$, and since $q^2-1$ divides $v$, we conclude that $r$ divides $a(q^4+1)(q^2+1)$. Thus $r<q^7$, and Lemma~\ref{lem:six}(c) forces
	$\lambda q^{16}/8<\lambda v<r^2<q^{14}$, and so $\lambda q^{2}<8$, but this inequality dose not hold for $\lambda \geq2$.
	
	Therefore, $i=1$. Thus $H_0=\PSp_{2}(q^2){\cdot}2$ and  $X=\PSp_{4}(q)$. Lemmas~\ref{lem:subdeg-PSp} and~\ref{lem:six}(d) imply that $r$ is a divisor of $q^2+1$. From this and the fact that  $v=q^2(q^2-1)/2<r^2$, we deduce that $s^{2}q^2(q^2-1)<2(q^2+1)^{2}$, where $rs=q^{2}+1$, for some positive integer $s$, but this is true only when $s=1$. Hence $r=q^{2}+1$ and $q$ is even. If $k=r=q^2+1$, then the equality $k(k-1)=\lambda(v-1)$ implies that $\lambda=2q^2/(q^2-2)$. But since $\gcd(q^2,q^2-2)=1$ or $2$, the parameter $\lambda$ has an integer value only when $q=2$ in which case  $\Dmc$ is a trivial design with parameters $(6,5,4)$. If $r>k$, then we apply Lemma \ref{lem:six}(a) and conclude that $k=1+\lambda(q^2-2)/2$, and since $k< r$, we must have $\lambda(q^2-2)< 2q^2$. By excluding the case where $X=\PSp_{4}(2)'\cong \A_{6}$, this inequality implies that $\lambda=1,2$, and by \cite{a:Saxl2002}, we can assume that $\lambda= 2$. In this case, by Lemma \ref{lem:six}(a)-(b), we have that $k=q^2-1$ and $b=q^2(q^2+1)/2$.
	Now by Lemma~\ref{lem:embed}, the design $\Dmc$ can be embedded into  a symmetric design with parameters $((q^4+q^2+2)/2,q^2+1,2)$ of order $q^{2}-1$ with $q=2^{a}$. But such a biplane does not exist for $a\geq 4$ by \cite[Section 2]{a:Assmus-1991}. If $a$ is $2$ or $3$, then the biplane has $137$ or $2081$ number of points, respectively, and so by \cite{a:Braic-2500-nopower}, we cannot find any biplanes with these number of points.\smallskip
	
	
	\noindent\textbf{(5)} Let $H$ be  a $\Cmc_5$-subgroup of type $\Sp_{2m}(q_{0})$ with $q=q_{0}^2$. In this case, by~\cite[Proposition 4.5.4]{b:KL-90}, we have that $H_{0}\cong \PSp_{2m}(q_{0}){\cdot} c$ with $q=q_{0}^2$ and $c\leq 2$, (with $c=2$ if and only if $q$ is odd).
	By \eqref{eq:v}, we observe that $v>q_0^{m(2m+1)}/2$. It follows from \cite[p. 329]{a:Saxl2002} that there is a subdegree of $X$ with the $p'$-part dividing $q_0^{2m}-1$, and so $r$ divides $a(q_0^{2m}-1)$. Applying Lemma~\ref{lem:six}(c), we conclude that $q_0^{m(2m+1)}<2v<2a^2(q_0^{2m}-1)^2$, and hence $q_0^{m(2m+1)}<2a^2(q_0^{2m}-1)^2$. Thus $q_0^{2m^2-3m}<2a^{2}$. This inequality holds only for $m=2$ and $q_{0}=2, 4, 8$, and so $v$ is $1360$, $1118464$ or $1090785280$, respectively. Therefore,  $r$ divides $9$, and hence $r$ is too small satisfying $\lambda v<r^2$, which is a contradiction.
\end{proof}

\begin{proposition}\label{prop:orth}
	Let $\Dmc$ be a nontrivial $2$-design with $\gcd(r, \lambda)=1$. Suppose that $G$ is an automorphism group of $\Dmc$ of almost simple type with socle $X$. If $G$ is flag-transitive, then the socle $X$ cannot be $\POm_{n}^{\e}(q)$ with $\e \in \{\circ, -, +\}$.
\end{proposition}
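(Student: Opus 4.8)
The plan is to argue exactly as in Propositions~\ref{prop:psl} and~\ref{prop:psp}. Set $H_0=H\cap X$ with $H=G_\alpha$ maximal in $G$, so by Aschbacher's Theorem~\cite{a:Aschbacher-84} the subgroup $H$ lies in one of $\Cmc_1,\dots,\Cmc_7$ or in $\Smc$ (there is no $\Cmc_8$ class when $X$ is orthogonal), and $v=|X|/|H_0|$ by~\eqref{eq:v}. For $H$ not parabolic, Corollary~\ref{cor:large-2} gives $|G|<|H|\cdot|H|_{p'}^2$, whence $|X|\leq|H_0|^3$, so $H$ is a large subgroup; combining the classification of large subgroups in~\cite[Theorem~7 and Section~4]{a:AB-Large-15} with Lemma~\ref{lem:coprime}(c) reduces the non-parabolic candidates to: the stabilizers $N_i$ of non-degenerate subspaces ($\Cmc_1$); the imprimitive family of type $\GO_2^{\e'}(q)\wr\S_{n/2}$ with $\e=(\e')^{n/2}$ isolated by Lemma~\ref{lem:coprime}(c) ($\Cmc_2$); and finitely many $\Cmc_2,\Cmc_3,\Cmc_5,\Cmc_6$ and $\Smc$ subgroups together with the small-rank sporadic cases. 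The families $\Cmc_4,\Cmc_7$ and the remaining $\Cmc_2$-types are discarded because their $p$-part is too large, exactly as in the proof of Lemma~\ref{lem:coprime}.

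First I would treat the \emph{parabolic} subgroups. Here $p\nmid v$, and a direct expansion of the index $v=|X:H_0|$ shows that the highest power of $p$ dividing $v-1$ equals $q$ (or a fixed small power of $q$ for larger $i$). Unless $X=\POm_{2m}^{+}(q)$ with $m$ odd, Lemma~\ref{lem:subdeg} provides a nontrivial subdegree that is a power of $p$; by Lemma~\ref{lem:six}(d) the replication number $r$ divides it, so $r$ is itself a power of $p$, and since $r\mid v-1$ by Lemma~\ref{lem:six}(a) we get $r\mid(v-1)_p=q$. Thus $r\leq q$, while $v$ is a power of $q$ whose exponent grows quadratically in $m$, and the inequality $\lambda v<r^2$ of Lemma~\ref{lem:six}(c) is violated. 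For the excluded family $\POm_{2m}^{+}(q)$ with $m$ odd I would instead use Remark~\ref{rem:subdeg} together with the explicit subdegrees of the $P_m$-action recorded in Table~\ref{tbl:subdeg-PSp}, and the same bound on $(v-1)_p$, to reach the same contradiction; in odd characteristic Lemma~\ref{lem:odd-deg} gives a supplementary constraint, since then $v$ is even and $r\mid v-1$ is forced odd.

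Next I would handle the \emph{non-parabolic} candidates. For each such $H$, Tits' Lemma~\ref{lem:Tits} gives $p\mid v$, so that $\gcd(p,v-1)=1$ and $r$ is coprime to $p$ by Lemma~\ref{lem:six}(a). For the reducible stabilizers $N_i$ and the field/subfield subgroups, I would read off from Table~\ref{tbl:subdeg-PSp} (the rows $N_1,N_2^{\delta},N_i^{\delta}$ for $\POm_{2m}^{\e}(q)$, the $N_1^{\delta}$ rows for $\POm_{2m+1}(q)$, and the $\Cmc_2,\Cmc_3,\Cmc_5$ rows for $\POm_{2m}^{+}(q)$) a subdegree whose $p'$-part is bounded by a power of $q$ of exponent linear in $m$; since $r$ divides every subdegree this bounds $r$, whereas~\eqref{eq:v} and the dimension estimates of~\cite[Lemma~4.2 and Corollary~4.3]{a:AB-Large-15} give a lower bound for $v$ that is a power of $q$ with exponent quadratic in $m$, so $\lambda v<r^2$ fails. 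The field-extension subgroups of type $\GU_m(q)$ and the subfield subgroups of type $\GO_n^{\e'}(q_0)$ are genuine classical groups, so in odd characteristic Lemma~\ref{lem:divisible} forces $r$ to be divisible by a parabolic index of $H_0$, which is even by Lemma~\ref{lem:odd-deg}, contradicting that $r$ divides the odd number $v-1$; in even characteristic these are again eliminated by the subdegree bound.

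Finally, the $\Cmc_6$-subgroups and the small $\Smc$-subgroups form a finite list: for each pair $(X,H_0)$ I would compute $v$ from~\eqref{eq:v} and the upper bound $u_r=\gcd(v-1,|H|)$ from Lemma~\ref{lem:six}(a)--(c), and check that $\lambda v>u_r^2$, exactly as in Tables~\ref{tbl:psl-c6-s} and~\ref{tbl:psp-c6-s}; the large $\Smc$-subgroups are controlled through~\cite[Theorem~4.2]{a:Liebeck1985} and $|X|<|H|^3$ (Corollary~\ref{cor:large}), and the few remaining low-dimensional examples are ruled out with~\cite[Chapter~8]{b:BHR-Max-Low} and a computation in \textsf{GAP}~\cite{GAP4}. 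I expect the main obstacle to be twofold: the parabolic subgroups of $\POm_{2m}^{+}(q)$ with $m$ odd, where Lemma~\ref{lem:subdeg} fails and the clean ``unique power-of-$p$ subdegree'' argument must be replaced by an explicit, case-by-case analysis of the subdegrees; and the imprimitive family $\GO_2^{\e'}(q)\wr\S_{n/2}$ supplied by Lemma~\ref{lem:coprime}(c), whose $p$-part is the smallest among all candidates, so that the large-subgroup inequalities are weakest and one must extract the sharpest possible subdegree information from~\cite{a:LPS2,a:Saxl2002} to force $\lambda v<r^2$ to fail.
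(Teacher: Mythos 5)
Your overall strategy coincides with the paper's: the same reduction via Aschbacher's Theorem, Lemma~\ref{lem:coprime}, Corollary~\ref{cor:large-2} and the large-subgroup classification of \cite{a:AB-Large-15}, followed by subdegree bounds from Table~\ref{tbl:subdeg-PSp} and \cite{a:Saxl2002} against $\lambda v<r^{2}$, the parity argument via Lemmas~\ref{lem:divisible} and~\ref{lem:odd-deg} in odd characteristic, and finite table checks for the $\Cmc_{6}$- and $\Smc$-cases. However, there are two concrete gaps in your reduction. First, you discard the whole of $\Cmc_{4}$ ``because the $p$-part is too large, exactly as in the proof of Lemma~\ref{lem:coprime}''; this inverts the logic of the filter. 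A large $p$-part does not eliminate a subgroup: the elimination happens only when $H$ is neither parabolic, nor has $|H_{0}|_{p}<|\Out(X)|$, nor satisfies $|X|\leq|H_{0}|^{3}$. The $\Cmc_{4}$-subgroups of type $\Sp_{n/2}(q)\otimes\Sp_{2}(q)$ with $(n,\e)\in\{(8,+),(12,+)\}$ \emph{are} large in this sense and survive the reduction, so they require a dedicated argument (the paper rules out $(\POm_{8}^{+}(q),\PSp_{4}(q){\times}\PSp_{2}(q))$ via the subdegree bound $r<4aq^{4}$ from \cite[p.~334]{a:Saxl2002}, and $(\POm_{12}^{+}(q),\PSp_{6}(q){\times}\PSp_{2}(q))$ via Corollary~\ref{cor:large-2}). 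Your list simply omits them.

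Second, you never address the case $X=\POm_{8}^{+}(q)$ when $G$ contains a triality automorphism. There the point stabiliser need not come from Aschbacher's list for $X$ as you have set it up: one must additionally handle the novelty maximal subgroups, namely the parabolic $P_{134}$, the subgroup $\G_{2}(q)$, and $[2^{9}]{\cdot}\PSL_{3}(2)$ for $q=3$. The paper postpones and treats these explicitly at the end of its proof ($P_{134}$ falls to $q^{11}<v<r^{2}\leq 9q^{2}$; for $\G_{2}(q)$ one shows $q+1$ divides both $r$ and $v$, contradicting $\gcd(r,v)=1$). Without this supplement your case analysis is not exhaustive. The remaining imprecisions (e.g.\ relying on Table~\ref{tbl:subdeg-PSp} for the $P_{m}$-subdegrees of $\POm_{2m}^{+}(q)$ with $m$ odd, where the table only covers $m=5$ and one first needs the general bound $r\leq q(q^{m}-1)$ from \cite[p.~332]{a:Saxl2002} to reduce to $m=5$) are matters of detail consistent with the paper's route, but the two omissions above must be repaired for the proof to be complete.
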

\begin{proof}
	Let $H_{0}=H\cap X$, where $H=G_{\alpha}$ for some point $\alpha$ of $\Dmc$. Since $H$ is maximal in $G$, by Aschbacher's Theorem~\cite{a:Aschbacher-84}, $H \in \Cmc_i \cup \Smc$ ($1\leq i\leq7$).
	By the same argument as in the proof of Proposition \ref{prop:psl} and using Lemma \ref{lem:coprime} and \cite[Theorem~7 and Proposition~4.23]{a:AB-Large-15}, we have one of the following possibilities for $H$:
	\begin{enumerate}[\rm (1)]
		\item $H \in \Cmc_{1} \cup \Cmc_6 \cup \Smc$;
		\item $H$ is a $\Cmc_2$-subgroup of type $\O_{n/t}^{\e'}(q)\wr\S_t$ and one of the following holds:
		\begin{enumerate}[(a)]
			\item $t=2$;
			\item $(n, t, q, \e, \e')=(12, 3, 2, -, -), (10, 5, 2, -, -)$ or $(8, 4, 2, +, -)$;
			\item $n=t$;
		\end{enumerate}
		\item $H$ is a $\Cmc_2$-subgroup of type $\GL_{n/2}(q)$;
		\item $H$ is a $\Cmc_2$-subgroup of type $\GO^{\e'}_2(q) \wr\S_{n/2}$ with $\e=(\e')^{n/2}$; 
		\item $H$ is a $\Cmc_3$-subgroup of type $\O_{n/2}^{\e'}(q^2)$ or $\GU_{n/2}(q)$;
		\item $H$ is a $\Cmc_4$-subgroup of type $\Sp_{n/2}(q)\otimes \Sp_{2}(q)$ and $(n, \e) \in\{(12, +), (8, +)\}$;
		\item $H$ is a $\Cmc_5$-subgroup of type $\O_{n}^{\e'}(q_{0})$ with $q=q_{0}^2$.
	\end{enumerate}
	
	We analyse each of these possible cases separately and arrive at a contradiction in
	each case. Note that we postpone the case where $(m,\e)=(4,+)$ and $G$ contains a triality automorphism until the end of the proof.\smallskip
	
	\noindent\textbf{(1)} Let $H$ be in $\Cmc_{1}$. Then $H$ stabilises a totally singular $i$-subspace with $2i\leq n$ or a nonsingular subspace.
	
	Assume first that $H$ stabilises a totally singular $i$-subspace. If $n$ is odd, we argue exactly the same as in the symplectic case in Proposition \ref{prop:psp} and obtain no possible parameters.
	Let now $n=2m$ be even, and suppose that $i<m$. Then $H=P_{i}$ unless $i=m-1$ and $\e=+$, in this case, $H=P_{m, m-1}$. Note by Lemma~\ref{lem:subdeg} that there is a subdegree which is a power of $p$ except for the case where $\e=+$, $n/2$ is odd and $H=P_{m}$ or $H=P_{m-1}$. On the other hand, the highest power $(v-1)_{p}$ of $p$ dividing $v-1$ divides $q^2$ or $8$, so $r$ is too small to satisfy $\lambda v<r^2$.
	
	Assume now that $H=P_{m}$ when $X=\POm_{2m}^{+}(q)$. Note here that $P_{m}$ and $P_{m-1}$ are the stabilisers of totally singular $m$-spaces from the two different $X$-orbits.
	If $m$ is even and $m>4$, we conclude by Lemma~\ref{lem:subdeg} that $G$ has a subdegree of power of $p$. This case again can be ruled out as $(v-1)_{p}=q$ and $r$ is too small to satisfy $\lambda v<r^2$. This leaves the case where $m\geq 5$ is odd.  Then by \cite[Proposition 4.1.20]{b:KL-90} and \eqref{eq:v}, we have that
	$v=(q^{m-1}+1)(q^{m-2}+1)\cdots(q+1)$, and so $v>q^{m(m-1)/2}$.
	In this case, by~\cite[p. 332]{a:Saxl2002}, there is a subdegree of $G$ with the $p'$-part dividing $q^{m}-1$. We note that $(v-1)_{p}=q$. Thus by Lemma~\ref{lem:six}(d), we deduce that $r\leq q(q^{m}-1)$, and so Lemma \ref{lem:six}(c) implies that $q^{m(m-1)/2}< v<r^2\leq q^2(q^{m}-1)^2<q^{2m+2}$. This inequality yields $q^{m^2-m}<q^{4m+4}$, that is to say, $m^2-5m-4<0$, and so $m=5$. We now apply Lemma~\ref{lem:subdeg-PSp} and conclude that $r$ divides $q(q^5-1)/(q-1)$. Thus $r=q(q^5-1)/s(q-1)$, for some positive integer $s$. We then observe that the inequality $\lambda q^{10}\leq \lambda v<r^2\leq q^2(q^5-1)^2/s^2(q-1)^2$ yields $\lambda s^2 q^{10}(q-1)^2<q^2(q^5-1)^2$, and this holds only for $s=1$ and $\lambda \in\{2,3\}$, and so  $r=q(q^5-1)/(q-1)$. Then by Lemma \ref{lem:six}, we have that $(r,\lambda,q) \in \{(62,2,2),(363,2,3),(62,3,2)\}$, but none of these cases give rise to any possible parameters.
	
	Assume finally that $H$ is the stabiliser of a nonsingular $i$-subspace, that is to say, $H=N_i^{\delta}$ with $i \leq  m$. Let $n=2m+1$, $q$ odd and $\e=\circ$. If $i=1$, then by \eqref{eq:v}, $v=q^{m}(q^{m}+\delta)/2$. We apply Lemmas~\ref{lem:subdeg-PSp} and~\ref{lem:six}(d) and conclude that $r\leq (q^{m}-\delta)/2$. So Lemma~\ref{lem:six}(c) implies that $2q^{m}(q^{m}+\delta)<(q^{m}-\delta)^2$, which is impossible. Thus $i\geq2$. By \cite[Proposition 4.1.6]{b:KL-90} and \eqref{eq:v}, we have that $v>q^{i(n-i)}/4$, and by \cite[p. 331]{a:Saxl2002}, $r\leq 2aq^m$. It follows from Lemma~\ref{lem:six}(c) that $q^{4m-2}/4\leq q^{i(2m+1-i)}/4<v<r^2\leq4a^2q^{2m}$, and so  $q^{2m-2}<16a^2$,  which is impossible as $m\geq 3$. Let now $n=2m$ and $\e=\pm$. If $H=N_i$ with $i=1$, then $v=q^{m-1}(q^{m}-\e)/\gcd(2, q-1)$, and by Lemma~\ref{lem:subdeg-PSp}, we conclude that $r\leq  (q^{m-1}+1)/\gcd(2, q-1)$. So Lemma~\ref{lem:six}(c) implies that  $q^{m-1}(q^{m}-\e)<2(q^{m-1}+1)^2$ if $q$ is odd, and $q^{m-1}(q^{m}-\e)<(q^{m-1}+1)^2$ if $q$ is even, but both cases are impossible. Hence $H=N_i^\delta$, with $1<i \leq m$. Note that $\delta=\pm$ present only if $i$ is even. Also $v>q^{i(n-i)}/4$. If $q$ is odd, then by \cite[p. 333]{a:Saxl2002},  $r<4aq^m$. Here by Lemma \ref{lem:six}(c), we have that $q^{2m-4}<64a^2$, which is impossible for $m\geq4$. Thus $q$ is even, and hence $i$ is also even. Assume first that ${i=2}$. It follows from Lemmas~\ref{lem:subdeg-PSp} and~\ref{lem:six}(d) that $r$ divides $a_{2'}(q-\delta)(q^{m-1}-\e \delta)$, and so $r<4a_{2'}q^m$. Thus Lemma~\ref{lem:six}(c) implies that $q^{4m-4}/4<v<r^2<16a^2q^{2m}$, whence $q^{2m-4}<64a^2$. This inequality holds only for $m=4$ and $q=2$. In this case, $r\leq a_{2'}(q+1)(q^3+1)$, and so $2^{10}=q^{12}/4<v<r^2\leq27^2$, which is impossible.
	Assume finally that $2<i\leq m$. Again by Lemmas~\ref{lem:subdeg-PSp} and~\ref{lem:six}(d), $r$ divides $a_{2'}(q^{i/2}-\delta)(q^{(i-2)/2}+\delta)(q^{(n-i)/2}+\delta')(q^{(n-i-2)/2}+\delta')$ with $\delta'=-\e \delta$, and so $r<8a_{2'}q^{2(m-1)}$. If $a$ is even, then Lemma~\ref{lem:six}(c) implies that $q^{8m-16}/4\leq q^{i(2m-i)}/4<v<r^2<16a^2q^{4m-4}$, and so  $q^{4m-12}<64a^2$. If $a$ is odd, then by the same argument, $q^{4m-12}<256a^2$. Both inequalities holds only for $m=4$ and $q=2$. This forces $i=4$, and so  $r\leq a_{2'}(q+1)^2(q^4-1)$ implies that $2^{14}<v<r^2\leq3^2{\cdot}15^2$, which is impossible.\smallskip
	
	Let now $H$ be a $\Cmc_6$-subgroup. Then by~\cite[Proposition 4.6.8]{b:KL-90} and the inequality $|G|<|H|^3$,
	we only need to consider the case where  $(X,H_0)=(\POm_8^+(3),2^6{\cdot}\A_8)$ and this case can be ruled out as  $3^{10}{\cdot}5{\cdot}13=v<r^2\leq2^6{\cdot}7^{2}$.\smallskip
	
	Let finally $H$ be a $\Smc$-subgroup. Then for $n\leq12$, the subgroups $H$ are listed in~\cite[Chapter 8]{b:BHR-Max-Low}.
	Since $|G|<|H|{\cdot}|H|^2_{p'}$, we only need to consider the pairs $(X,H_0)$ listed in Table~\ref{tbl:orth-c6-s}. If $H_0= \G_2(q)$ with $q$ odd, then $v=q^3(q^4-1)/2$. Since $\gcd(r,v)=1$, $r$ is odd. Then we apply Lemma~\ref{lem:divisible} to $\G_2(q)$ and by~\cite[Theorem~A]{a:Kleidman-G2}, we deduce that $(q^6-1)/(q-1)$ divides $r$, which is a contradiction.
	For the case where $(X,H_0)=(\POm_8^{+}(q), \rm{\Omega}_7(q))$, according to \cite[p. 335]{a:Saxl2002}, $r\leq(q^3+1)/2$, and so $v>r^2$, which is a contradiction. For all other cases, we use the fact that $r$ divides $\gcd(|H|,v-1)$ which similarly forces $\lambda v>r^2$, which is a contradiction. Hence $n\geq 13$.
	Then \cite[Theorem~4.2]{a:Liebeck1985} implies that (i) $|H|<q^{2n+4}$, (ii) $H'=\A_{n+1}$ or $\A_{n+2}$, or (iii) $X$ and $H_0$ are as in \cite[Table 4]{a:Liebeck1985}. If (i) holds, then by Corollary~\ref{cor:large-2} and \cite[Corollary 4.3]{a:AB-Large-15}, we have that $q^{n(n-1)/2}/8<|G|<|H|^3<q^{6n+12}$, and this yields, $q^{n^2-13n-24}<64$, that is to say, $n^2-13n-30<0$, whence $n=13,14$. If $n=13$, then  $|H|<q^{2n+4}=q^{30}$, and so $q^{78}/4<|G|<q^{30}|H|_{p'}^2$, and this yields $|H|_{p'}>q^{24}/2$. By the same method as \cite{a:Liebeck1985,a:Liebeck-HA-rank3} we have no possible choices for $H$.
	If $n=14$, then $|H|<q^{2n+4}=q^{32}$, and so it follows from Corollary~\ref{cor:large-2} and~\cite[Corollary~4.3]{a:AB-Large-15} that $q^{91}/8<|G|<|H|{\cdot}|H|^2_{p'}<q^{32}{\cdot}|H_{p'}|^2$, and hence $(2\sqrt{2})^{-1}q^{29}<|H|_{p'}$. This case also can be ruled out by the same argument as in~\cite[Sections 2, 3 and 5]{a:Liebeck-HA-rank3}. If (ii) or (iii) holds,  the fact that $|G|<|H|^3$ implies that $(X,H_{0})$ is  $(\rm{\Omega}_{14}^+(2),\A_{16})$, $(\rm{\Omega}_{16}^+(2),\A_{17})$, $(\rm{\Omega}_{18}^{-}(2),\A_{20})$, $(\rm{\Omega}_{20}^{-}(2),\A_{21})$, $(\rm{\Omega}_{22}^{+}(2), \A_{24})$ or $(\POm_8^{+}(q),\PSp_6(q))$  but all these cases can easily be excluded. Note that in the latter case, by Lemma~\ref{lem:divisible}, $q+1$ divides both $v$ and $r$, which is a contradiction by the fact that $\gcd(r,v)=1$.
	\smallskip
	
	\begin{table}
		\scriptsize
		\caption{\footnotesize Some maximal subgroups of $X=\POm^\e_n(q)$.}\label{tbl:orth-c6-s}
		\begin{tabular}{clllll}
			\noalign{\smallskip}\hline\noalign{\smallskip}
			Class & $X$ & $H_0$ & $l_v$ & $u_r$ & Conditions \\
			\noalign{\smallskip}\hline\noalign{\smallskip}
			$\Cmc_{2}$ &
			$\POm_{7}(3)$ &
			$2^{6}{\cdot}\A_{7}$  &
			$3^7{\cdot}13$ &
			$2{\cdot}5$ &
			\\
			$\Cmc_{2}$ &
			$\POm_{7}(5)$ &
			$2^{6}{\cdot}\A_{7}$  &
			$3^2{\cdot}5^{8}{\cdot}13{\cdot}31$ &
			$2$ &
			\\
			$\Cmc_{2}$ &
			$\POm_{9}(3)$ &
			$2^{8}{\cdot}\A_{9}$  &
			$3^{12}{\cdot}5{\cdot}13{\cdot}41$ &
			$2^3$ &
			\\
			$\Cmc_{2}$ &
			$\POm_{11}(3)$ &
			$2^{10}{\cdot}\A_{11}$  &
			$3^{21}{\cdot}11{\cdot}13{\cdot}41{\cdot}61$&
			$2^{4}{\cdot}7$ &
			\\
			$\Cmc_{2}$ &
			$\POm_{8}^{+}(3)$ &
			$2^{6}{\cdot}\A_{8}$  &
			$3^{10}{\cdot}5{\cdot}13$ &
			$2^3{\cdot}7$ &
			\\
			$\Cmc_{2}$ &
			$\POm_{10}^{-}(3)$ &
			$2^{8}{\cdot}\A_{10}$  &
			$3^{16}{\cdot}13{\cdot}41{\cdot}61$&
			$2^6$ &
			\\
			$\Cmc_{2}$ &
			$\POm_{8}^{+}(2)$ &
			$\rm{\Omega}_{2}^{-}(2)^4{\cdot} 2^{6}{\cdot}3$  &
			$2^{6}{\cdot}5^2{\cdot}7$ &
			$3$ &
			\\
			$\Cmc_{2}$ &
			$\POm_{8}^{+}(2)$ &
			$\rm{\Omega}_{2}^{+}(2)^4{\cdot} 2^{6}{\cdot}3$  &
			$2^{6}{\cdot}3^4{\cdot}5^2{\cdot}7$ &
			$1$ &
			\\
			$\Cmc_{2}$ &
			$\POm_{8}^{+}(3)$ &
			$\rm{\Omega}_{2}^{\pm}(3)^4{\cdot} 2^{9}{\cdot}3$  &
			$2^{3}{\cdot}3^{11}{\cdot}5^2{\cdot}7{\cdot}13$ &
			$1$ &
			\\
			$\Cmc_{2}$ &
			$\POm_{10}^{-}(2)$ &
			$\rm{\Omega}_{2}^{-}(2)^5{\cdot} 2^{7}{\cdot}3{\cdot}{5}$ &
			$2^{13}{\cdot}5{\cdot}7{\cdot}11{\cdot}17$ &
			$3$ &
			\\

			$\Cmc_{2}$ &
			$\POm_{10}^{+}(2)$ &
			$\rm{\Omega}_{2}^{+}(2)^5{\cdot} 2^{7}{\cdot}3{\cdot}{5}$ &
			$2^{13}{\cdot}3^4{\cdot}5{\cdot}7{\cdot}17{\cdot}31$ &
			$1$ &
			\\

			$\Cmc_{2}$ &
			$\POm_{12}^{-}(2)$ &
			$\rm{\Omega}_{4}^{-}(2)^3{\cdot} 2^{3}{\cdot}3$  &
			$2^{21}{\cdot}3^{2}{\cdot}7{\cdot}11{\cdot}13{\cdot}17{\cdot}31$ &
			$5$ &
			\\
			$\Smc$ &
			$\POm_{7}(q)$ &
			$\G_{2}(q)$ &
			$q^3(q^4-1)/2$ &
			-&
			$q$ odd
			\\
			$\Smc$ &
			$\POm_{7}(3)$ &
			$\PSp_{6}(2)$&
			$3^{5}{\cdot} 13$ &
			$2$ &
			\\
			$\Smc$ &
			$\POm_{7}(3)$ &
			$\S_{9}$  &
			$2^{2}{\cdot} 3^{5}{\cdot} 13$ &
			$5{\cdot}7$ &
			\\
			$\Smc$ &
			$\POm_{7}(5)$ &
			$\PSp_{6}(2)$ &
			$5^{8}{\cdot} 13{\cdot} 31$ &
			$2{\cdot}3$ &
			\\
			$\Smc$ &
			$\POm_{8}^{+}(q)$ &
			$\rm{\Omega}_{7}(q)$ &
			$q^3(q^4-1)/2$ &
			$(q^3+1)/2$ &
			$q$ odd\\
			$\Smc$ &
			$\POm_{8}^{+}(2)$ &
			$\A_{9}$ &
			$2^{6}{\cdot} 3{\cdot} 5$ &
			$7$ &
			\\
			$\Smc$ &
			$\POm_{8}^{+}(3)$ &
			$\rm{\Omega}_{8}^{+}(2)$ &
			$3^{7}{\cdot} 13$ &
			$2{\cdot}5$ &
			\\
			$\Smc$ &
			$\POm_{8}^{+}(5)$ &
			$\rm{\Omega}_{8}^{+}(2)$ &
			$5^{10}{\cdot} 13^{2}{\cdot} 31$ &
			$2{\cdot}3$ &
			\\
			$\Smc$ &
			$\POm_{8}^{+}(7)$ &
			$\rm{\Omega}_{8}^{+}(2)$ &
			$2^{4}{\cdot} 5^{2}{\cdot} 7^{11}{\cdot} 19{\cdot} 43$ &
			$3$ &
			\\
			$\Smc$ &
			$\POm_{10}^{-}(2)$ &
			$\A_{12}$ &
			$2^{11}{\cdot} 3{\cdot} 17$ &
			$7$ &
			\\
			$\Smc$ &
			$\POm_{10}^{+}(3)$ &
			$\A_{12}$ &
			$2^{6}{\cdot} 3^{15}{\cdot} 11{\cdot} 13{\cdot} 41$ &
			$1$ &
			\\
			$\Smc$ &
			$\POm_{12}^{-}(2)$ &
			$\A_{13}$ &
			$2^{21}{\cdot} 3{\cdot} 5{\cdot} 17{\cdot} 31$ &
			$1$ &
			\\
			$\Smc$ &
			$\POm_{14}^{+}(2)$ &
			$\A_{16}$ &
			$2^{28}{\cdot} 3^{2}{\cdot} 17{\cdot} 31{\cdot} 127$ &
			$5{\cdot}7$ &
			\\
			$\Smc$ &
			$\POm_{16}^{+}(2)$ &
			$\A_{17}$ &
			$2^{42}{\cdot} 3^{4}{\cdot} 5{\cdot} 17{\cdot} 31{\cdot} 43{\cdot} 127$ &
			$1$ &
			\\
			$\Smc$ &
			$\POm_{18}^{-}(2)$ &
			$\A_{20}$ &
			$2^{55}{\cdot} 3^{5}{\cdot} 17{\cdot} 31{\cdot} 43{\cdot} 127{\cdot} 257$ &
			$5{\cdot}13$ &
			\\
			$\Smc$ &
			$\POm_{20}^{-}(2)$ &
			$\A_{21}$ &
			$2^{73}{\cdot} 3^{4}{\cdot} 5^{2}{\cdot} 17{\cdot} 31{\cdot} 41{\cdot} 43{\cdot} 73$ &
			$13$ &
			\\
			$\Smc$ &
			$\POm_{22}^{+}(2)$ &
			$\A_{24}$ &
			$2^{89}{{\cdot}}3^4{{\cdot}}5^2{{\cdot}}17{{\cdot}}31^2{{\cdot}}41{{\cdot}}43{{\cdot}}73$ &
			$1$ &
			\\
			$\Smc$ &
			$\POm_{8}^{+}(q)$&
			$\PSp_{6}(q)$ &
			$q^3(q^4-1)$&
			-&
			$q$ even\\
			\noalign{\smallskip}\hline\noalign{\smallskip}
		\end{tabular}
	\end{table}
	\noindent\textbf{(2)} Let $H$ be a $\Cmc_2$-subgroup of type $\O_{m}^{\e'}(q)\wr\S_t$ with $m=n/t$. \smallskip
	
	\noindent{(a)}\quad Here $t=2$,  and so by \cite[Propositions 4.2.11 and 4.2.14]{b:KL-90}, the pair $(X,H_0)$ has to be $(\POm_{2m}^{+}(q), \rm{\Omega}_{m}^{\e'}(q)^2{\cdot} 2^{c})$ with $\e'=\pm$ and $c=2,3$, or $(\POm_{2m}^{\pm}(q),\rm{\Omega}_{m}(q)^2{\cdot}4)$ with $mq$ odd. In the former case, let first $m=4$.  If $\e'=+$, then $|H_0|=2^cq^4(q^2-1)^4/\gcd(4,q^{4}-1)$ with $c=2,3$, and so $v\geq q^8(q^2+1)^2(q^4+q^2+1)/8>q^{16}/8$. Lemmas~\ref{lem:New} and \ref{lem:six}(c) imply that $r$ divides $6a\gcd(4,q^{4}-1)\cdot |H_0|$.
	But $\gcd(v-1,q^2-1)\leq 2$ and $4$ dose not divide $v-1$. Then $r$ divides $6a_{2'}$. By Lemma~\ref{lem:six}(c) and the fact that  $a^2\leq 2q$, we have that $q^{15} < 576$, which is impossible.  If $\e'=-$, then $|H_0|=2^cq^4(q^2+1)^2(q^2-1)^2/\gcd(4,q^{4}-1)$ with $c=2,3$, and $v\geq q^8(q^6-1)(q^2-1)/8>q^{16}/32$ and $v$ is even and is divisible by $q^2-1$. So $r$ is a divisor of the odd part of $3a(q^2+1)^2$. Here by Lemma~\ref{lem:six}(c), $q^8<2^7{\cdot}3^2a^2$, whence $q=2$. But then $r\leq 75$, and so $6048 \leq  v<r^2\leq 75^{2}=5625$, which is impossible.
	Therefore, $m\geq5$, but this case can be ruled out by the same argument as in the case where $H=N_i^\delta$, that is to say, part (1) on page 20. In the latter case where $(X,H_0)=(\POm_{2m}^{\pm}(q),\rm{\Omega}_{m}(q)^2{\cdot}4)$ with $mq$ odd, by \eqref{eq:v}, $v>q^{m^2-5}$, and by \cite[p. 333]{a:Saxl2002}, $r<4aq^m$, and this violates the fact that $\lambda v<r^2$.\smallskip
	
	\noindent{(b)}\quad
	The possibilities for $(X,H_{0})$ in this case are listed in Table~\ref{tbl:orth-c6-s}, and for each such $H_{0}$, by~\eqref{eq:v}, and the fact that $r$ divides $\gcd(|H|,v-1)$, we obtain $l_v$ and $u_r$ as in the same table, but then for each possibilities, $\lambda v<r^2$ does not hold, which is a contradiction.\smallskip
	
	\noindent{(c)}\quad  In this case, by~\cite[Proposition 4.2.15]{b:KL-90}, and since $|G|<|H|{\cdot}|H|^2_{p'}$, the only possibilities are shown in Table~\ref{tbl:orth-c6-s}. Here $H_0=2^{n-1}\A_n$ or $2^{n-2}\A_n$, respectively for $n$ is odd or even as in Table~\ref{tbl:orth-c6-s}, and these cases can also be ruled out as $r^2<v$.\smallskip
	
	\noindent\textbf{(3)} Let $H$ be a $\Cmc_2$-subgroup of type $\GL_{m}(q)$, where $m=n/2$. Thus by~\cite[Proposition 4.2.7]{b:KL-90}, $\e=+$ and $H_0\cong \, ^{\hat{}}\SL_{m}(q){\cdot}(q-1){\cdot}\gcd(m,2)/\gcd(q-1,2)$. So by~\eqref{eq:v}, we obtain $v>q^{m(m-1)}/2$. Assume first that $m=4$. Then by \cite[p. 333]{a:Saxl2002}, $r<4aq^4$ if $q$ is odd and $r\leq aq^4$ if $q$ is even. In both cases, $r^2<v$, which is a contradiction.
	Assume now that $m\geq5$. Then Lemmas~\ref{lem:subdeg-PSp} and \ref{lem:six}(d) imply that $r$ divides the odd part of $a(q^m-1)(q^{m-1}-1)/(q+1)$ as $\gcd(r,v)=1$. Thus Lemma~\ref{lem:six}(d) yields $q^{m(m-1)}/2<v<r^2<a^2q^{4m-2}$, and so $q^{m^2-5m+2}<2a^2$, which is impossible. \smallskip
	
	\noindent\textbf{(4)} Here, by \cite[p. 333]{a:Saxl2002}, if $q\geq 3$, then $G$ has a subdegree dividing $n(n-2)(q+1)^{2}|\Out(X)|$, and if $q=2$, we have a subdegree dividing $n(n-2)(n-4)(q+1)^{3}|\Out(X)|$. Both cases violate $\lambda v<r^2$.\smallskip
	
	\noindent\textbf{(5)} Let $H$ be a $\Cmc_3$-subgroup of type $\O_{m}^{\e'}(q^2)$ or $\GU_{m}(q)$ with $m=n/2$. Then by \cite[Propositions 4.3.14-4.3.18 and 4.3.20]{b:KL-90}, one of the following holds:
	\begin{enumerate}[(a)]
		\item $H\cong N_{G}(\rm{\Omega}_{m}^{\e'}(q^2))$ with $\e'=\pm$ if $m$ is even and empty otherwise;
		\item $H\cong N_{G}(^{\hat{}}\GU_{m}(q))$ with $\e=(-1)^{m}$.
	\end{enumerate}
	
	Assume first that $H\cong N_{G}(\rm{\Omega}_{m}^{\e'}(q^2))$ with $\e'=\pm$ if $m$ is even and empty otherwise. If $q$ is odd, then we apply Lemma~\ref{lem:divisible} to $H$ and conclude that an index of a parabolic subgroup of $H$
	divides $r$, and it follows from Lemma~\ref{lem:odd-deg} that $r$ is even, but we know that $v$ is even and this contradicts the fact that $r$ divides $v-1$.
	Hence, $q$ is even and therefore $m$ is also even. Then~\cite[Propositions 4.3.14 and 4.3.16]{b:KL-90} and \eqref{eq:v} imply that
	$v=q^{\frac{1}{2}m^2}(q^{2m-2}-1)(q^{2m-6}-1)\cdots (q^2-1)/2^c$ with $c=1,2$. Note that $v$ is even and $r$ is odd. Also by \cite[Lemma 4.2 and Corollary 4.3]{a:AB-Large-15}, we have that $v>q^{m^2-5}/4$. As $r$ divides $v-1$, it is coprime to $q^2-1$. Here $|H_0|=2cq^{m(m/2-1)}(q^{m}-\e') \prod_{i=1}^{m/2-1}(q^{4i}-1)/\gcd(4,q^{m/2}-\e')$ with $c=1,2$, and so
	$r\leq6aq^{(m^2-3m+2)/2}$. Thus Lemma~\ref{lem:six}(c) yields $q^{m^2-5}/4<v<r^2\leq36a^2q^{m^2-3m+2}$, and so $q^{3m-7}<144a^2$. This inequality holds only for $m=4$ and $q=2$. But then $v\geq 12096$ and $r\leq 48$, and so $v>r^2$, which is a contradiction.\smallskip
	
	Assume finally that $H\cong N_{G}\left(^{\hat{}}\GU_{m}(q)\right)$ with $\e=(-1)^{m}$. Then by \eqref{eq:v}, we have that $v>q^{m(m-1)}/2$. If $q$ is odd, we argue as before. Let $q$ be even. If $m=4$, then by \cite[p. 334]{a:Saxl2002}, $r\leq a_{2'}(q+1)(q^3+1)$. But then we have that $r^2<v$, which is a contradiction.
	Thus $m\geq5$. Here by Lemmas \ref{lem:subdeg-PSp} and \ref{lem:six}(d), $r$ divides $a(q^m-(-1)^m))(q^{m-1}-(-1)^{m-1})$, and so $r<2aq^{2m-1}$. Therefore, by Lemma~\ref{lem:six}(c), we conclude that $q^{m^2-5m+2}< 8a^2$, and this inequality holds only for $m=5$ and $q \in \{2,4,8\}$. But then  $q^{20}/2<v<r^2\leq a^2_{2'}(q^5+1)^2(q^4-1)^2$, and so $q^{20}<2a^2_{2'}(q^5+1)^2(q^4-1)^2$, which does not hold.\smallskip
	
	\noindent\textbf{(6)} Let $H$ be a $\Cmc_4$-subgroup of type $\Sp_{n/2}(q)\otimes \Sp_{2}(q)$ and $(n, \e) \in\{(12, +), (8, +)\}$. Then
	$(X, H_0)$ is $(\POm_{8}^{+}(q),\PSp_{4}(q){\times}\PSp_{2}(q))$ or $(\POm_{12}^{+}(q),\PSp_{6}(q){\times}\PSp_{2}(q))$.  If $X=\POm_{8}^{+}(q)$, then $v=q^7(q^6-1)(q^2+1)$, and again by \cite[p. 334]{a:Saxl2002}, $r<4aq^4$. Thus by Lemma~\ref{lem:six}(c), $q^{15}/2<v<r^2<16a^2q^8$, and so $q^6<2^6$, which is a contradiction.
	If $X=\POm_{12}^{+}(q)$, then we apply Corollary~\ref{cor:large-2},  \cite[Corollary 4.3]{a:AB-Large-15} and Lemma~\ref{lem:equ}, and since $a^2\leq2q$, we deduce that $q^{66}/8<2^7q^{53}$, and so  $q^{13}<2^{10}$, which is impossible.\smallskip
	
	\noindent\textbf{(7)} Let $H$ be a $\Cmc_5$-subgroup of type $\O_{n}^{\e'}(q_{0})$ with $q=q_{0}^2$. If $n$ is odd, then by \cite[Proposition 4.5.8]{b:KL-90}, $H_0=\rm{\Omega}_n(q_0){\cdot}2$, and $v$ is odd. As $\gcd(r,p)=1$, Lemmas~\ref{lem:divisible} and \ref{lem:odd-deg} imply that $r$ is even, which is a contradiction. If $n$ is even, then by \cite[Proposition 4.5.10]{b:KL-90}, $H_0=\POm_{n}^{\e'}(q_{0}){{\cdot}}2^{c}$ with $c\leq2$ and $n=2m$, and by \eqref{eq:v}, $v>q_{0}^{m(2m-1)}/4$. We apply Lemmas~\ref{lem:subdeg-PSp} and \ref{lem:six}(d) and conclude that $r<16aq_0^{2m-1}$ for $m\geq5$ and $r<48aq_0^{2m-1}$ for $m=4$. If $m\geq5$, then by Lemma~\ref{lem:six}(c) and the fact that  $a^2\leq2q=2q_0^2$, we have that $q_0^{2m^2-5m}<2^{11}$, that is to say, $2m^2-5m-11<0$, which is impossible. If $m=4$, then $q_0^{14}<2^{10}{\cdot}3^2a^2$, and so $q=q_{0}^{2}=4$. Thus $H_{0}=\rm{\Omega}_{8}^{-}(2)$. But then $2^{26}<v$ and $r\leq2^{2}{\cdot}{3}{\cdot}{7}{\cdot}17$, which is a contradiction. \smallskip
	
	To complete the proof, we only need to discuss the case where $X=\POm^+_8(q)$ and $G$ contains a triality automorphism  when $H_0$ is a parabolic subgroup, $\G_2(q)$ or $[2^{9}]{\cdot}\PSL_{3}(2)$  for $q=3$. If $H_0$ is a parabolic subgroup of $X$, then  according to \cite[p. 335]{a:Saxl2002}, it is either $P_2$ or $P_{134}$. The first case was ruled out in (1). For the latter case, $q^{11}<v<r^2\leq 9q^2$, which is impossible.
	This leaves only to consider the cases where $H_{0}$ is $\G_2(q)$ or $[2^{9}]{\cdot}\PSL_{3}(2)$  for $q=3$. In the former case, as $q+1$ divides both $r$ and $v$, which is a contradiction. In the latter case, $3^{11}{\cdot}5^{2}{\cdot}13=v<r^2\leq2^2{\cdot}7^2$, which is impossible.
\end{proof}

\subsection{Proof of Theorem~\ref{thm:main}}

The proof follows from Propositions~\ref{prop:psl2}-\ref{prop:orth} and the main results in \cite{a:A-Exp-CP,a:ABD-Un-CP,a:Zhou-sym-sporadic,a:Zhan-CP-nonsym-sprodic,a:Zhou-CP-nonsym-alt,a:Zhuo-CP-sym-alt,a:Saxl2002}. More precisely, the $2$-designs in Examples \ref{ex:witt}-\ref{ex:suzuki} arise from studying $2$-designs in \cite{a:A-Exp-CP,a:ABD-Exp,a:Saxl2002} when the socle of $G$ is a finite simple exceptional group, and the $2$-designs in Example \ref{ex:other} are obtained in \cite{a:ABD-Un-CP,a:Zhou-sym-sporadic,a:Zhan-CP-nonsym-sprodic,a:Zhou-CP-nonsym-alt,a:Zhuo-CP-sym-alt}. For the remaining possibilities of the socle $X$ of $G$, by Propositions~\ref{prop:psl2}-\ref{prop:orth}, the result follows. 

\subsection{Proof of Corollary~\ref{cor:rp}}

Suppose that $\Dmc$ is a a nontrivial $2$-design with prime replication number $r$. Suppose also that $G$ is a flag-transitive automorphism group of $\Dmc$ of almost simple type with socle $X$, and  $\alpha\in \Pmc$. Then  by \cite{a:ABCD-PrimeRep}, $\Dmc$ and $G$ are as in lines {\rm 1-3, 7, 11, 13} of {\rm Table~\ref{tbl:main}} or $\Dmc$ is the Witt-Bose-Shrikhande space $\W(2^a)$ as in part (b), or $X=\PSL_{n}(q)$ with $n\geq 3$, $G_{\alpha}\cap X=\,^{\hat{}}[q^{n-1}]{:}\SL_{n-1}(q){\cdot} (q-1)$, $v=(q^{n}-1)/(q-1)$ and $r$ is a primitive divisor of $(q^{n-1}-1)/(q-1)$ . In the latter case, if $\Dmc$ is not a projective space as in part (a), then since $r$ is prime, it is coprime to $\lambda$, and hence Theorem~\ref{thm:main} implies that $\Dmc$ is a $2$-design with $r=q(q^{n-1}-1)/(q-1)$, however this case cannot occur as $r$ is prime. 

\subsection{Proof of Corollary~\ref{cor:main}}

By Proposition~\ref{prop:flag}, the group $G$ is primitive of almost simple or affine type. The latter case has been treated by Biliotti and Montinaro in \cite{a:Biliotti-CP-sym-affine} and in conclusion $G\leq \AGaL_{1}(q)$ is point-primitive and block-primitive with $q$ an odd prime power. In the case where, $G$ is of almost simple type, we apply Theorem~\ref{thm:main}. If $X=\PSL_{n}(q)$, then Propositions~\ref{prop:psl2} and \ref{prop:psl} implies that $\Dmc$ is a projective space $\PG_{n-1}(q)$ as in {\rm Example~\ref{ex:proj-space}}, and among other possible $2$-designs in Theorem~\ref{thm:main}, as mentioned in Section~\ref{sec:examples}, the only remaining symmetric design is the unique Hadamard design with parameters  $(11,5,2)$ as in line $6$ of {\rm Table~\ref{tbl:main}}. Recall that the design in line $12$ of Table \ref{tbl:main} is viewed as a projective space and is included in Example~\ref{ex:proj-space}. 

\section*{acknowledgements}
	The authors would like to thank anonymous referees for providing us helpful and constructive comments and suggestions to improve our main theorem, in particular, Proposition~4.3 and its  proof.

%
%


\end{document}